\documentclass[alphabetic]{amsart}
\usepackage{enumerate, longtable}
\usepackage{amsmath, amscd, amsfonts, amsthm, amssymb, latexsym, comment, stmaryrd, graphicx, dsfont, mathtools} 
\usepackage{xcolor}
\usepackage[all]{xy}
\usepackage{fullpage}
\usepackage{tikz}
\usepackage[
        colorlinks,
        linkcolor=red,  citecolor=blue,
        backref
]{hyperref}
\usepackage{amsrefs}

\usepackage{amstext} 
\usepackage{array}
\usepackage{cleveref}

\usepackage{xcolor}

\numberwithin{equation}{section}

\newtheorem{claim}{Claim}[section]

\newtheorem{theorem}{Theorem}
\newtheorem{prop}[claim]{Proposition}
\newtheorem{cor}[claim]{Corollary}
\newtheorem{lem}[claim]{Lemma}

\theoremstyle{definition}

\newtheorem{definition}[claim]{Definition}

\newtheorem*{theorem*}{Theorem}

\newcommand{\ab}{{\mathbf a}}

\newcommand{\Z}{{\mathbb Z}}

\newcommand{\R}{{\mathbb R}}

\newcommand{\case}[1]{\vspace{0.3cm}\noindent\framebox{#1.}}

\newcommand{\Oh}[1]{O\left( {#1} \right)}
\newcommand{\parenth}[1]{\left( {#1} \right)}

\newcommand{\vol}{{\mathrm{vol}}}

\newcommand{\E}{{\mathbb{E}}}
\newcommand{\Ewp}{{\mathbb{E}_g^{WP}}}

\newcommand{\mean}[2]{\left\langle {#1} \right\rangle_{{#2}}}
\newcommand{\meanGamma}[1]{\mean{#1}{\Gamma\in L_{g,q}}}
\newcommand{\F}{\mathbb{F}}

\definecolor{dgreen}{RGB}{0,128,0}

\title{Closed geodesics in homology classes modulo sublattices}
\author{Noam Pirani}

\begin{document}

\begin{abstract}
Let $M$ be a Weil-Petersson random hyperbolic surface of genus $g$, and let $\Gamma\subset\Z^{2g}$ a lattice of prime index $q$. We study the distribution of primitive closed geodesics in homology classes mod $\Gamma$ in the large genus limit. Averaging over all lattices of index $q$, with $q\to\infty$, we compute all the centered moments of the corresponding weighted counting functions, and exhibit a transition between Poisson and Gaussian regimes (depending on whether $\frac{X}{q\log X}$, the expected number of primitive geodesics in a given homology class mod $\Gamma$, tends to $\lambda>0$ or $\infty$). We also study the unnormalized variance $G_M(X,\Gamma)$ of the counts among homology classes, and show that as $X\to\infty$, averaged over all lattices of prime index $q$, it is asymptotic to $X\log X$ in the large genus limit. These results are analogous to phenomena arising in the distribution of primes in arithmetic progressions.
\end{abstract}

\maketitle

\tableofcontents

\section{Introduction}

\subsection{Prime geodesics in homology classes} Let $M$ be a closed hyperbolic surface of genus $\ge 2$. Our goal is to study the distribution of primitive closed geodesics in homology classes. For us, all geodesics are oriented, unless stated explicitly otherwise. 

Let $\gamma$ be a closed geodesic. We define its norm to be the exponent of its length
$
N(\gamma)=e^{l(\gamma)}.
$ A closed geodesic $\gamma$ is called imprimitive if $\gamma=\gamma_0^k$, $k\ge 2$ integer, for some other closed geodesic $\gamma_0$. Otherwise it is called primitive. If $\gamma=\gamma_0^k$, $k\ge 1$ integer, and $\gamma_0$ primitive, we define the von Mangoldt function $\Lambda(\gamma)=\log N(\gamma_0)$.

Define the Chebyshev theta function 
$$
\Theta_M(X)=\sum_{N(\gamma)\le X}\Lambda(\gamma).
$$
Here, and in general in this paper, a sum over geodesics is always over oriented, primitive closed geodesics, unless stated explicitly otherwise. We sometimes abbreviate and write simply $\Theta(X)$. The function $\Theta_M(X)$ is a weighted counting function of the primitive closed geodesics on $M$ of norm at most $X$. The Prime Geodesic Theorem states that
$$
\Theta_M(X)\sim X, X\to\infty.
$$

One can refine the question of counting primitive geodesics and ask instead how they are distributed among homology classes. Recall that we can identify $H_1(M,\Z)$ with the lattice $\Z^{2g}$. Let $q$ be an odd prime, $\Gamma\subset \Z^{2g}$ a lattice of index $q$. Define for $\alpha\in \Z^{2g}/\Gamma$,

$$
\Theta_M(X;\Gamma,\alpha)=\sum_{N(\gamma)\le X\atop [\gamma]=\alpha\pmod{\Gamma}}\Lambda(\gamma).
$$
Here, working with primitive geodesics is important: indeed, if iterates were allowed, for any closed $\gamma$ we have $[\gamma^q]=0\pmod{\Gamma}$, and we get many trivial contributions. Phillips and Sarnak \cite{PhillipsSarnak1987} proved that for a fixed $\alpha$, as $X\to\infty$ we have
$$
\Theta_M(X;\Gamma,\alpha)\sim \frac{\Theta_M(X)}{\vol(\Gamma)}, \vol(\Gamma):=[\Z^{2g}:\Gamma].
$$
The goal of this paper is to study the fluctuations $\Theta_M(X;\Gamma,\alpha)$ around its mean. For $k\ge 1$, define the $k$-th centered moment
$$
S_M^k(X,\Gamma)=\frac{1}{q}\sum_{\alpha\in \Z^{2g}/\Gamma}\parenth{\Theta_M(X;\Gamma,\alpha)-\frac{\Theta_M(X)}{\vol(\Gamma)}}^k.
$$
Let $L_{g,q}$ be the set of all lattices $\Gamma\subset\Z^{2g}$ of index $q$. For a function $F:L_{g,q}\to \R$ define 

$$
\meanGamma{F(\Gamma)}=\frac{1}{|L_{g,q}|}\sum_{\Gamma\in L_{g,q}}F(\Gamma).
$$
We will mostly study the average $\meanGamma{S_M^k(X,\Gamma)}$.

\subsection{Averaging over the moduli space} Let $\mathcal{M}_g$ be the moduli space of hyperbolic surfaces of genus $g$, $g\ge 2$. $\mathcal{M}_g$ is equipped with a probability measure, namely the Weil-Petersson measure. We want to compute the moments $\meanGamma{S_M^k(X,\Gamma)}$ for a random $M\in\mathcal{M}_g$, as we take the large genus limit $g\to\infty$. Explicitly, we study the asymptotics of

$$
\lim_{g\to\infty}\Ewp[\meanGamma{S_M^k(X,\Gamma}],
$$
as $X\to\infty$, and $q=q(X)$ is a sequence of odd primes which goes to $\infty$ with $X$.

We discover that there are two regimes. By the Prime Geodesic Theorem and the result of Phillips-Sarnak, heuristically we expect about $\frac{X}{q\log X}$ primitive geodesics of norm $\le X$ to fall in any fixed $\alpha\in H_1(M,\Z)/\Gamma$. In the regime $\frac{X}{q\log X}\to\infty$ we discover a Gaussian behavior, while in the regime $\frac{X}{q\log X}\to\lambda>0$ we discover a Poisson behavior.

\begin{theorem}\label{thm_gaussian_regime}
Let $k>1$ be an integer, and let $q=q(X)$ be a sequence of primes, with $\frac{X}{q\log X}\to\infty$, $q\to\infty$ as $X\to\infty$. Set $\sigma=\sqrt{\frac{1}{q}X\log X}$. Then,
$$
\lim_{X\to\infty}\lim_{g\to\infty}\frac{\Ewp[\meanGamma{S_M^k(X,\Gamma)}]}{\sigma^k}=\begin{cases}
    0, &k\text{ is odd},\\
    (k-1)!!, &k\text{ is even.}
\end{cases}
$$
\end{theorem}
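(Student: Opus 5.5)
Throughout, I treat the large-genus input as a black box: an equidistribution statement for geodesics in homology as $g\to\infty$, which I have not verified against the earlier parts of the paper.

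The plan is to expand the $k$-th centered moment in characters, average over lattices, and then pass to the large genus limit. The lattices $\Gamma$ of prime index $q$ correspond to the kernels of the nonzero characters $\chi:\Z^{2g}\to\Z/q$, up to scaling. Hence $\Z^{2g}/\Gamma\cong\Z/q$, and the indicator of $[\gamma]\equiv\alpha\pmod{\Gamma}$ equals $\frac1q\sum_{j=0}^{q-1}e(j(\chi([\gamma])-\chi(\alpha))/q)$. The $j=0$ term cancels the mean $\Theta_M(X)/q$. Expanding the $k$-th power and summing over $\alpha$ gives
\begin{equation*}
S_M^k(X,\Gamma)=\frac{1}{q^{k}}\sum_{\substack{j_1,\dots,j_k\neq 0\\ j_1+\dots+j_k\equiv 0}}\ \sum_{\gamma_1,\dots,\gamma_k}\prod_i\Lambda(\gamma_i)\,e\Big(\tfrac{1}{q}\chi\big(\textstyle\sum_i j_i[\gamma_i]\big)\Big).
\end{equation*}
Averaging over $\chi\in(\Z/q)^{2g}\setminus\{0\}$ makes $\frac{1}{q^{2g}-1}\sum_\chi e(\chi(v)/q)$ equal to $1$ when $v\equiv 0\pmod q$ and $-1/(q^{2g}-1)$ otherwise. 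The error from the second case vanishes as $g\to\infty$, since the number of geodesics of norm $\le X$ is bounded in expectation independently of $g$ once $X$ is fixed. This leaves counting $k$-tuples of primitive geodesics and coefficients $j_i$ with $\sum j_i[\gamma_i]\equiv 0\pmod q$ in $\Z^{2g}$.

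Next I take $\Ewp$ and let $g\to\infty$. The key geometric input is that for fixed $X$ the short geodesics on a WP-random surface are, with probability $\to1$, simple, non-separating and pairwise in general position homologically. In particular, their homology classes are linearly independent mod $q$ unless geodesics coincide up to orientation. Coincidence up to orientation is the case $\gamma_i=\gamma_j$ or $\gamma_i=\gamma_j^{-1}$, with $[\gamma_j^{-1}]=-[\gamma_j]$. Under this input, $\sum j_i[\gamma_i]\equiv0$ forces the indices to group into blocks of equal unoriented geodesics, with the signed $j$'s summing to $0$ mod $q$ in each block. By the Mirzakhani--Petri Poisson limit for short geodesics, the counts $N_{g}(a,b)$ converge to independent Poisson variables of intensity $\frac{\cosh t-1}{t}\,dt\sim \frac{e^t}{2t}$ for unoriented geodesics. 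Hence $\lim_g\Ewp$ of the sum becomes a sum over set partitions $\pi$ of $\{1,\dots,k\}$ into blocks. Each block $B$ contributes $\int_0^{\log X}t^{|B|}\frac{e^t}{t}dt\approx X(\log X)^{|B|-1}$, up to the orientation factor 2 versus $\frac12$ for unoriented geodesics. It is also multiplied by the number of admissible $(j_i,\text{signs})_{i\in B}$, which is $\approx 2^{|B|}(q-1)^{|B|-1}$ up to lower order. Blocks of size $1$ contribute $0$ because $j\neq0$.

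Finally I do the asymptotic bookkeeping. A partition with blocks of sizes $b_1,\dots,b_m$, all $b_r\ge2$, contributes roughly
\begin{equation*}
q^{-k}\prod_r q^{b_r-1}X(\log X)^{b_r-1}=\Big(\frac{X\log X}{q}\Big)^{m}(\log X)^{k-2m}\cdot\frac{1}{(\log X)^{0}}\cdots
\end{equation*}
Normalizing by $\sigma^k=(X\log X/q)^{k/2}$ gives $(X/(q\log X))^{m-k/2}$. This tends to $0$ unless $m=k/2$, which requires all blocks to be pairs. Counting pair partitions gives $(k-1)!!$ for even $k$ and $0$ for odd $k$, since the hypothesis $X/(q\log X)\to\infty$ kills every other partition. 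I must check that the constants for pairs give exactly $\sigma^2$. The pair term is $q^{-2}\cdot(q-1)\cdot(\text{both orientations})\cdot\int t^2\cdot\frac{e^t}{t}\,dt\sim X\log X/q$, after matching factors of $2$ from orientation with the $\frac12$ in the unoriented density. The main obstacle is justifying the interchange of $g\to\infty$ with the sums and proving the homological independence of short geodesics mod $q$ in expectation. That requires bounding $\Ewp$ of the number of tuples with a nontrivial mod-$q$ relation, including separating and nonsimple geodesics, by $o(1)$ for fixed $X$ and $q$. I would try first to bound these by Mirzakhani's integration formula, which gives $O(1/g)$ for such tuples, combined with $q^{2g}$-uniformity of the character average.
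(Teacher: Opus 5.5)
\textbf{The constant for pairings is not justified when $k\ge 4$.} Everything up to that point is sound. Your character expansion is a cleaner route to the paper's key reduction than the paper's hyperplane count plus induction on $k$. With your $j_i$ in the role of the paper's $\alpha_i$, averaging over $\chi$ gives exactly $\meanGamma{S_M^k(X,\Gamma)}=q^{-k}\sum_{\alpha_1\cdots\alpha_k\neq0}C^k_{(\alpha_1,\ldots,\alpha_{k-1})}(X)+O(\Theta(X)^kq^{-2g})$, in one step. The rest also matches the paper: reducing to SNS tuples, killing singleton blocks, and the $(X/(q\log X))^{m-k/2}$ bound for non-pairings.

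The problem is that you count admissible coefficient/sign data block by block, $\approx 2^{|B|}(q-1)^{|B|-1}$ per block, and multiply. This drops the global constraint $j_1+\cdots+j_k\equiv 0$, which couples the blocks. For a pair block $\{a,b\}$, the block equation $\epsilon_a j_a+\epsilon_b j_b\equiv0$ alone has $4(q-1)$ solutions. It also admits $\epsilon_a=-\epsilon_b$, $j_a=j_b$. Multiplying over the $d=k/2$ blocks and using the unoriented intensity $(\cosh t-1)/t$, each pair block gives $4(q-1)I_2(X)/q^2\sim 2\sigma^2$. So each pairing would contribute $2^{k/2}\sigma^k$, and the final answer would be $2^{k/2}(k-1)!!$.

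Your check of ``the pair term'' uses only $2(q-1)$ choices, namely equal orientations. That is right for $k=2$, but only because there the global constraint is the block constraint $j_2=-j_1$. It does not extend multiplicatively. For example, with $k=4$ and $\pi=\{\{1,2\},\{3,4\}\}$, the true count is $4(q-1)^2+4(q-1)$, not $16(q-1)^2$.

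The missing ingredient is the paper's computation $\mathcal{S}(\pi)=2^dq^{k-d}+O(q^{k-d-1})$ for partitions without singletons. Suppose the signs $\epsilon_i$ are constant on every block. Then the block equations $\sum_{i\in B}j_i\equiv0$ imply $\sum_i j_i\equiv 0$, so there are $q^{k-d}$ solutions, minus $O(q^{k-d-1})$ for imposing $j_i\neq0$. Now suppose some block has mixed signs. Then $\sum_i j_i\equiv0$ is linearly independent of the block equations: a dependence would force the coefficients on that block to equal both $+1$ and $-1$. This costs an extra factor of $q$.

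So only the $2^d$ block-constant sign patterns survive at leading order. Each pair then contributes $2(q-1)I_2(X)/q^2\sim\sigma^2$, and the total is $(k-1)!!\,\sigma^k$. Your argument that non-pairings are negligible is unaffected, since dropping a constraint only overcounts and an upper bound is all that step needs.
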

That is, the $k$-th centered moments of $\Theta_M(X;\Gamma,\alpha)$, when averaged over all $\Gamma$ and $M\in\mathcal{M}_g$, agree with those of a standard Gaussian random variable. In the scale $\frac{X}{q}\asymp\log X$, we observe a transition to Poisson behavior. 

\begin{theorem}\label{thm_main_Poisson}
Let $k>1$, and denote $\sigma=\sqrt{\frac{1}{q}X\log X}$. Suppose that $q=q(X)$ is a sequence of primes with $q\to\infty$ and $\lim_{X\to\infty}\frac{X}{q\log X}=\lambda,\lambda>0$. Let $P\sim \mathrm{Poi}(\lambda)$. Then,
$$
\lim_{X\to\infty}\lim_{g\to\infty}\frac{\Ewp[\meanGamma{S_M^k(X,\Gamma)}]}{\sigma^k}=\mathbb{E}\left[\parenth{
\frac{P-\lambda}{\sqrt\lambda}}^k
\right].
$$    
\end{theorem}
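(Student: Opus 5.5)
We follow the usual route for moments of primes in progressions: write the moment through characters of the finite quotient, average over lattices, and then pass to the large genus limit with Mirzakhani's integration formula. The key input from the random-surface side is the following. As $g\to\infty$, the joint statistics of primitive closed geodesics of bounded length (their lengths and their homology classes) on a Weil--Petersson random surface become those of a Poisson process. Moreover, in the limit only \emph{simple non-separating} geodesics contribute, and the homology classes of distinct such geodesics behave like independent uniform vectors in $\Z^{2g}$ modulo any fixed finite-index lattice. Separating and non-simple geodesics contribute $o(1)$ for each fixed $X$.

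\textbf{Step 1: reduction to a Poisson sum.} Since $q$ is prime, $\Z^{2g}/\Gamma\cong\Z/q$. We have
$$
\Theta_M(X;\Gamma,\alpha)-\frac{\Theta_M(X)}{q}=\sum_{\gamma}\Lambda(\gamma)\Bigl(\mathbf 1_{[\gamma]\equiv\alpha}-\tfrac1q\Bigr).
$$
Expanding the $k$-th power gives a sum over $k$-tuples $(\gamma_1,\dots,\gamma_k)$. We group the tuple according to the set partition $\pi$ of $\{1,\dots,k\}$ recording which $\gamma_i$ coincide. For a block of size $m$, the block contributes $\Lambda(\gamma)^m\bigl(\mathbf 1_{[\gamma]\equiv\alpha}-\frac1q\bigr)^m$. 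We then take the expectation in the $g\to\infty$ limit. The distinct geodesics form a Poisson process on $[0,\log X]$ with intensity $\frac{e^{t}}{t}\,dt\cdot t$ for the measure weighted by $\Lambda$ (the limiting intensity of the length spectrum, cf. Mirzakhani--Petri). Their homology classes mod $\Gamma$ are i.i.d. uniform. The average over $\Gamma\in L_{g,q}$ serves only to guarantee this uniformity and independence, i.e. that $[\gamma]\bmod\Gamma$ is equidistributed for a non-separating class, with the error from $\Gamma$ containing $[\gamma]$ being $O(1/q)$.

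For a fixed $\alpha$, the random variable $\mathbf 1_{[\gamma]\equiv\alpha}-\frac1q$ is a centered Bernoulli$(1/q)$ variable. Hence any block of size $1$ makes the term vanish. Each block of size $m\ge2$ contributes
$$
\int_0^{\log X}t^{m}\,\frac{e^t}{t}\,dt\cdot\E\bigl[(B-\tfrac1q)^m\bigr]\sim(\log X)^{m-1}X\cdot\frac1q\bigl(1+O(1/q)\bigr).
$$
The moment therefore equals a sum over partitions $\pi$ with all blocks of size $\ge2$, the summand being
$$
\prod_{B\in\pi}\frac{X(\log X)^{|B|-1}}{q}(1+o(1)).
$$
This is exactly the cumulant expansion of a compound Poisson sum: the $m$-th cumulant is $\kappa_m\sim\frac{X(\log X)^{m-1}}{q}$.

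\textbf{Step 2: normalization.} We have $\kappa_m/\sigma^m=\frac{X(\log X)^{m-1}/q}{(X\log X/q)^{m/2}}=\bigl(\frac{X}{q\log X}\bigr)^{1-m/2}$. When this quantity tends to $\lambda$, the normalized cumulants tend to $\lambda^{1-m/2}$. These are precisely the cumulants of $(P-\lambda)/\sqrt\lambda$, since every cumulant of $P$ equals $\lambda$. The moment-cumulant formula then gives the claim. The same computation with $\lambda\to\infty$ kills all cumulants of order $m\ge3$ and yields Theorem \ref{thm_gaussian_regime}.

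\textbf{Main obstacle.} The hard step is the interchange of the limits $g\to\infty$ and the sum over $k$-tuples. One must justify, uniformly in $\Gamma$, that the $g\to\infty$ limit of $\Ewp$ of products of $\Lambda$-weighted, class-restricted counts equals the Poisson/uniform-class prediction. We would attempt this using Mirzakhani's integration formula for multicurves of bounded total length $\le k\log X$, together with the asymptotics $V_{g,n}(\ell)/V_{g,n}\to\prod\frac{\sinh(\ell_i/2)}{\ell_i/2}$. The counting of mapping-class orbits of non-separating multicurves with prescribed homology classes mod $\Gamma$ supplies the uniform distribution. Finally, the contribution of non-simple geodesics is $O(1/g)$ for fixed $X$, and so disappears in the inner limit.
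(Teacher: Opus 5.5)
There is a concrete error in your Step 1. You treat distinct \emph{oriented} primitive geodesics as independent Poisson points with independent uniform classes, and from this you conclude that every singleton block vanishes. But $\gamma$ and $\gamma^{-1}$ are distinct oriented geodesics, both counted in $\Theta_M(X;\Gamma,\alpha)$, with the same length and $[\gamma^{-1}]=-[\gamma]$.

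Averaging over $(\Gamma,\alpha)$ is the same as averaging over a uniform nonzero functional $\phi$ on $\F_q^{2g}$ and a uniform $a\in\F_q$, with $1_{[\gamma]\equiv\alpha}=1_{\phi([\gamma])=a}$. Take an SNS pair $\eta_1,\eta_2$ with classes $v_1,v_2$, and the tuple $(\eta_1,\eta_1^{-1},\eta_2,\eta_2^{-1})$ in the fourth moment. All four entries are distinct, so your grouping puts this tuple in the all-singletons partition. Yet, conditioning on $a\neq0$ (each pair-factor has mean $-q^{-2}$) and on $a=0$ (it has mean $q^{-1}(1-q^{-1})$), one gets, up to $O(q^{-2g})$,
\[
\mathbb{E}_{\phi,a}\Bigl[\prod_{j=1}^{2}\bigl(1_{\phi(v_j)=a}-\tfrac1q\bigr)\bigl(1_{\phi(v_j)=-a}-\tfrac1q\bigr)\Bigr]
=\Bigl(1-\tfrac1q\Bigr)\frac{1}{q^{4}}+\frac1q\cdot\frac{1}{q^{2}}\Bigl(1-\tfrac1q\Bigr)^{2}=\frac{q-1}{q^{4}}\neq0 .
\]
This is only $O(1/q)$ relative to the $(2,2)$ main terms, which are $\asymp q^{-2}$. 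So your final $\kappa_m\sim X(\log X)^{m-1}/q$ is right, but that is exactly what must be proved. It is the non-$\pi$-compatible sign case of Proposition \ref{prop_S_pi_general_partition_with_no_singleton_blocks}.

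The repair in your language is to let the Poisson points be the \emph{unoriented} SNS geodesics, with intensity $\frac{e^t+e^{-t}-2}{2t}\,dt$ (the measure behind Proposition \ref{prop_collision_integrals}). Each point carries the mark $\Lambda(\eta)\bigl(1_{\phi(v)=a}+1_{\phi(v)=-a}-\frac2q\bigr)$. Given $a$, these marks are independent across $\eta$ and have mean $0$. For $a\neq0$ and odd $q$ the two events are disjoint, so the $m$-th moment is $\frac2q(1+O(1/q))$; the case $a=0$ has weight $1/q$. The factor $2$ cancels the $\frac12$ in the intensity and recovers your $\kappa_m$. This is not cosmetic: for $q=2$, where $[\gamma^{-1}]\equiv[\gamma]$, your independent model would already get the variance wrong by a factor $2$.

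Relatedly, the uniformity you invoke is not a random-surface phenomenon, and it cannot be asked for ``uniformly in $\Gamma$''. For fixed $\Gamma$, $S_M^k(X,\Gamma)$ is not even a function on $\mathcal{M}_g$, since the identification $H_1(M,\Z)\cong\Z^{2g}$ is not canonical; only the average over $L_{g,q}$ is. The actual input is deterministic linear algebra:
\begin{itemize}
\item SNS classes extend to a symplectic basis, hence are linearly independent mod $q$.
\item The hyperplane average of Proposition \ref{prop_k_vectors_on_lattice}, together with the average over $\alpha$, makes $(\phi(v_1),\dots,\phi(v_d),a)$ uniform up to $O(q^{-2g})$.
\end{itemize}
The surface average only supplies Lemma \ref{lem_non_SNS_are_negligible} and Proposition \ref{prop_collision_integrals}, and for fixed $X$ there is no delicate interchange of limits.

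With these repairs your route is a genuine and somewhat cleaner alternative to the paper's. The paper expands the uncentered correlation $1_{[\gamma_1]=\cdots=[\gamma_k]}$ through functionals $(\alpha_1,\dots,\alpha_{k-1})\in\F_q^{k-1}$. It then strips off degenerate points by the binomial recursion (Propositions \ref{prop_recursion_for_S_M_k} and \ref{prop_S_M_mean_as_sum_over_C_alphas}), counts solutions of the systems $\mathcal{S}(\pi;\epsilon_1,\dots,\epsilon_k)$, and cites Privault for $\sum_{\ab}c(\ab)\lambda^{l(\ab)-k/2}$. Your centered expansion kills singletons directly. The moment--cumulant formula replaces the citation and exhibits both regimes at once through the normalized cumulants $(X/(q\log X))^{1-m/2}$.
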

We also study the unnormalized variance and its average
$$
G_M(X,\Gamma)=[\Z^{2g}:\Gamma]S_M^2(X,\Gamma)=qS_M^2(X,\Gamma), H_M(X,q)=\meanGamma{G_M(X,\Gamma)}.
$$
In a recent paper, Rudnick \cite{Rud26b} studies the quantity $G_M(X,q)$ which is essentially equal to $G_M(X, q\Z^{2g})$ (he also considers imprimitive geodesics; however, this does not change the asymptotics). Rudnick proves that 
$$
\lim_{g\to\infty}\Ewp[G_M(X,q)]\sim\begin{cases}
    X\log X, & q>2,\\
    2X\log X, & q=2.
\end{cases}
$$
The unnormalized variance $G_M(X,\Gamma)$ is closely related to a conjecture of Hooley regarding the distribution of prime numbers in arithmetic progressions, and its average value $H_M(X,q)$ is closely related to Barban-Davenport-Halberstam type theorems (see Section \ref{sec_number_theoretic_discussoin} for further discussion). 

Since the volume of $q\Z^{2g}$ is $q^{2g}\to_{g\to\infty}\infty$, we expect most homology classes mod $q$ to contain no primitive geodesics of norm $\le X$. It is therefore desirable to gain an understanding of $G_M(X,\Gamma)$ for a non-sparse limit.

We give two results which suggest that the $X\log X$ behavior persists also for non-sparse lattices. Our first result in Corollary \ref{cor_bdh_type_thm} is that as $X\to\infty$,
$$
\lim_{g\to\infty}\Ewp[H_M(X,q)]\sim \parenth{1-\frac{1}{q}}X\log X,
$$
which also holds for $q$ fixed. In particular, when we let $q\to\infty$, we get the same $ X\log X$ behavior here. Moreover, we show that the same behavior persists for a typical lattice $\Gamma$ of index $q$:

\begin{theorem}\label{thm_G_M_X_gamma_concentrations}
Let $\epsilon>0$ be fixed, and let $q=q(X)\to_{X\to\infty}\infty$ be a sequence of odd primes. We have
$$
\lim_{X\to\infty} \lim_{g\to\infty} \Ewp\left[
\frac{1}{|L_{g,q}|}\#\left\{\Gamma\in L_{g,q}: \left|\frac{G_M(X,\Gamma)}{X\log X}-1\right|>\epsilon\right\}
\right]=0.
$$
\end{theorem}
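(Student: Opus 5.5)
The plan is a second-moment (Chebyshev) argument over the pair $(M,\Gamma)$. Since $\Ewp$ and the average over $L_{g,q}$ are both probability averages, it suffices to show, as $X\to\infty$,
\begin{equation}\label{eq_plan_second_moment}
\lim_{g\to\infty}\Ewp\Bigl[\meanGamma{G_M(X,\Gamma)}\Bigr]\sim X\log X,\qquad
\lim_{g\to\infty}\Ewp\Bigl[\meanGamma{G_M(X,\Gamma)^2}\Bigr]\sim (X\log X)^2 .
\end{equation}
Expanding the square, these give $\lim_g\Ewp\langle (G_M/(X\log X)-1)^2\rangle_{\Gamma}\to0$. Markov's inequality then bounds the probability in Theorem \ref{thm_G_M_X_gamma_concentrations} by $\epsilon^{-2}$ times this quantity, which tends to zero.

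The first asymptotic is Corollary \ref{cor_bdh_type_thm}, since $(1-1/q)\to1$ as $q\to\infty$. For the second, I would write $G_M$ via orthogonality of characters of $\Z^{2g}/\Gamma\cong\Z/q$. Such a lattice is $\Gamma=\ker(\chi)$ for a nonzero functional $\chi\in\mathrm{Hom}(\Z^{2g},\F_q)$, determined up to $\F_q^\times$-scaling. Writing $\psi_j=e(j\chi(\cdot)/q)$, this gives
$$
G_M(X,\Gamma)=\sum_{j=1}^{q-1}\Bigl|\sum_{N(\gamma)\le X}\Lambda(\gamma)\psi_j([\gamma])\Bigr|^2=\sum_{N(\gamma),N(\gamma')\le X}\Lambda(\gamma)\Lambda(\gamma')\bigl(q\,\mathbf 1_{[\gamma]\equiv[\gamma']\ (\Gamma)}-1\bigr).
$$
Squaring yields a sum over quadruples $(\gamma_1,\gamma_2,\gamma_3,\gamma_4)$ weighted by
$$
\bigl(q\mathbf 1_{[\gamma_1]-[\gamma_2]\in\Gamma}-1\bigr)\bigl(q\mathbf 1_{[\gamma_3]-[\gamma_4]\in\Gamma}-1\bigr).
$$
Averaging over $\Gamma\in L_{g,q}$ is averaging over $\chi$ in projective space. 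Hence $\meanGamma{\cdot}$ of this weight depends only on the $\F_q$-linear relations between $u=[\gamma_1]-[\gamma_2]$ and $v=[\gamma_3]-[\gamma_4]$ mod $q$. If $u,v$ are linearly independent mod $q$ the average is $O(q^{-2g+2})$-small. If $u\equiv v\equiv 0$ it is $(q-1)^2$. If exactly one vanishes it is $0$. If $u,v$ are nonzero and proportional it is $\approx q-1$ (and $\approx q^2$ when hitting a common kernel).

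Next, I would take $g\to\infty$ using the same large-genus input used earlier in the paper for the moments in Theorems \ref{thm_gaussian_regime} and \ref{thm_main_Poisson}. That input is that homology classes of distinct primitive geodesics (up to orientation) behave like independent, essentially non-degenerate vectors in $\Z^{2g}$. Nontrivial linear relations mod $q$ among a bounded number of them occur only with vanishing WP-probability, except for the forced ones coming from $\gamma_i=\gamma_j^{\pm1}$. So only "diagonal" quadruples survive. These are the pairings $\gamma_1=\gamma_2^{\pm1}$ and $\gamma_3=\gamma_4^{\pm1}$, together with the cross pairings $\{\gamma_1,\gamma_2\}=\{\gamma_3,\gamma_4\}^{\pm}$ up to orientation.

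The pairings $u,v\in\{0,\pm2[\gamma]\}$ give the product term. Its expectation is $(\sum\Lambda^2)^2$-type, $\sim (X\log X)^2$ by Mirzakhani-type integration and the Prime Geodesic Theorem in expectation. The cross-pairing terms give contributions of size $q\cdot\Ewp[(\sum_{N\le X}\Lambda)^2]\approx qX^2$. The comparison of $qX^2$ with $(X\log X)^2$ is the crux, since this is not obviously small. I expect that the $-1$ corrections cancel this term: for $u=\pm v$ nonzero the averaged weight is $\langle (q\mathbf 1-1)^2\rangle=q-1$, but summed against the generic off-diagonal mass it should be compensated, exactly as in the computation of $\meanGamma{S^2_M}$ underlying Corollary \ref{cor_bdh_type_thm}.

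The main obstacle is this step: rigorously showing that after $g\to\infty$ the only non-negligible terms are the diagonal pairings, with error $o((X\log X)^2)$ uniformly in $q\to\infty$. This requires bounding the Weil–Petersson expectation of four-geodesic sums with a homology congruence mod $q$. I would first try to reuse, as a black box, the multi-geodesic estimates proved for the $k=4$ moment in Theorem \ref{thm_gaussian_regime}: $G_M^2$ is a linear combination of the same correlation sums with different combinatorial weights, so the same diagonal-dominance lemma should apply.
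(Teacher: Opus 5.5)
Your architecture is sound and matches the paper: a second moment over $(M,\Gamma)$ followed by Chebyshev. The paper splits this same second moment into the variance over $\Gamma$ (Corollary \ref{cor_variance_over_lattices}) and the variance over $M$ of $\meanGamma{G_M(X,\Gamma)}$ (Corollary \ref{cor_Var_g_G_M_X}). The gap is that the only nontrivial input, $\lim_{g\to\infty}\Ewp[\meanGamma{G_M(X,\Gamma)^2}]\sim (X\log X)^2$, is not proved, and your sketch goes wrong exactly where it matters.

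\textbf{The three problems.}
\begin{enumerate}[(i)]
\item Your character identity is missing a factor $1/q$. By Parseval, $\sum_{j=1}^{q-1}|\sum\Lambda(\gamma)\psi_j([\gamma])|^2=q\,G_M(X,\Gamma)$, so in fact $G_M(X,\Gamma)=\sum\Lambda(\gamma)\Lambda(\gamma')(\mathbf 1_{[\gamma]-[\gamma']\in\Gamma}-\frac1q)$. The $\Gamma$-averaged weight is then $(1-\frac1q)^2$ if $u\equiv v\equiv0$, equals $\frac1q-\frac1{q^2}+O(q^{-2g})$ if $u,v$ are nonzero and proportional, and is $O(q^{-2g})$ otherwise. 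Comparing your cross term with a main term of size $(X\log X)^2$ mixes the two normalizations: in yours the main term is $(q-1)^2(X\log X)^2$.
\item The cross-pairing quadruples ($\gamma_3=\gamma_1^{\pm1}$, $\gamma_4=\gamma_2^{\pm1}$, or with $3,4$ swapped) carry weight $\Lambda(\eta_1)^2\Lambda(\eta_2)^2$. Their mass is therefore of order $(\sum\Lambda^2)^2\asymp (X\log X)^2$, not $(\sum\Lambda)^2\asymp X^2$.
\item Most importantly, nothing cancels them.
\end{enumerate}

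\textbf{What actually happens.} For an SNS pair, $[\eta_1],[\eta_2]$ are independent mod $q$. Hence $u=\alpha v$ forces $\epsilon_1=\alpha\epsilon_3$ and $\epsilon_2=\alpha\epsilon_4$ (similarly for the partition $\{1,4\},\{2,3\}$), so $\alpha=\pm1$ with four sign choices each. Summing over both cross partitions gives
\[
\lim_{g\to\infty}\Ewp\Bigl[\sum_{\alpha\in\F_q^\times}S_\alpha(X,q)\Bigr]=16I_2(X)^2+O(I_4(X))=4X^2\log^2X+O(X^2\log X)
\]
(Proposition \ref{prop_S_alpha_estimate}). This contributes $\frac{4(q-1)}{q^2}X^2\log^2X$ to the second moment.

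That contribution is $o((X\log X)^2)$ only because $q\to\infty$, and this is the one place the hypothesis on $q$ is used. No cancellation can remove it: for fixed $q$, $\lim_g\Ewp\langle (G_M(X,\Gamma)-(1-\frac1q)X\log X)^2\rangle_{\Gamma}/(X\log X)^2\to 4(q-1)/q^2>0$, whereas your hoped-for cancellation would make it tend to $0$.

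\textbf{The diagonal term.} This needs the analogous collision count for $J_M(X,q)^2$. At top order only non-cross $(2,2)$ collisions with $\epsilon_1=\epsilon_2$, $\epsilon_3=\epsilon_4$ survive, giving $\lim_g\Ewp[J_M(X,q)^2]=4I_2(X)^2+O(I_4(X))=X^2\log^2X+O(X^2\log X)$ (Proposition \ref{prop_expected_val_J_M}). This is then multiplied by $(1-\frac1q)^2$. Together these give $\lim_g\Ewp[\meanGamma{G_M(X,\Gamma)^2}]=(X\log X)^2\bigl(1+O(\frac1q+\frac1{\log X})\bigr)$. Combined with Corollary \ref{cor_bdh_type_thm}, this completes your plan.

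Finally, Theorem \ref{thm_gaussian_regime} cannot be used as a black box here. It aggregates $C^4_{(\alpha_1,\alpha_2,\alpha_3)}$ over essentially all of $\F_q^3$ with normalization $\sigma^4$. You need only the one-parameter slice $(\alpha_1,\alpha_2,\alpha_3)=(1,-1,-\alpha)$, so the per-collision-type computation must be redone there.
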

These results suggest that indeed the limit $X\log X$ is the correct behavior for the variance (rather than $X\log q$, which would be the analogous behavior if we adapt Hooley's conjecture to this context).

\textbf{Acknowledgements. } The author thanks Ze\'ev Rudnick for many helpful conversations and for comments on previous versions of this paper. This work was supported by the ISF-NSFC joint research program (Grant No. 3109/23).

\section{Preliminaries}

\subsection{Mirzakhani's integration formulas}
Let $\mathcal{M}_g$ be the moduli space of all hyperbolic surfaces of given genus $g$, equipped with the Weil-Petersson measure $\operatorname{dVol}^{WP}$. We briefly discuss Mirzakhani's integration formulas, used to integrate geometric functions over $\mathcal{M}_g$. For a function $f:(0,\infty)^k\to \R$ and a multicurve $\Gamma=(\gamma_1,\ldots,\gamma_k)$, we define a geometric  function on $\mathcal{M}_g$ via

$$
f_\Gamma(X) = \sum_{\alpha\in \operatorname{Mod}[\Gamma]}f(l_\alpha(X)).
$$
$f_\Gamma$ defines a random variable on $\mathcal{M}_g$, thought of as a probability space with the probability measure $\frac{\operatorname{dVol}^{WP}}{V_g}$. Mirzakhani's integration formula allows to compute the expected value of $f_\Gamma$, 

\begin{equation}\label{eqn_Mirzakhanis_integration_formula}
\Ewp[f_\Gamma]:=\frac{1}{V_g}\int_{\mathcal{M}_g}f_\Gamma(X)\operatorname{dVol}^{WP}(X)=C_\Gamma\int_{\R_+^k}F(l_1,\ldots,l_k)V_{g}(\Gamma,\mathbf l)l_1\cdots l_k dl_1\cdots dl_k.
\end{equation}
Here, $V_{g}(\Gamma,\mathbf l)$ is the volume of the moduli space of hyperbolic surfaces of genus $g$ with the multicurve $\Gamma$ removed and with $2k$ boundary components of lengths $l_1,l_1,l_2,l_2,\ldots,l_k,l_k$. 
In case the cut surface $M-\Gamma$ remains connected,  $C_\Gamma=2^{-k}$. Incorporating into \eqref{eqn_Mirzakhanis_integration_formula} the estimates for the volume ratios \cite[Proposition 3.1]{MP19}, we get that
\begin{equation}\label{eqn_mirzakhanis_integration_with_vol_estimate}
\frac{1}{V_g}\int_{\mathcal{M}_g}f_\Gamma(X)\operatorname{dVol}^{WP}(X)=C_\Gamma\int_{\R_+^k}F(l_1,\ldots,l_k)\prod_{i=1}^k \parenth{\frac{\sinh(l_i/2)}{l_i/2}}^2\parenth{1+\Oh{\frac{l_i^2}{g}}}l_i dl_i.
\end{equation}

\begin{definition}
Let $\gamma_1,\ldots,\gamma_k$ be different primitive geodesics on $M$. We call them a non-SNS $k$-tuple of geodesics (or inadmissible) if they intersect themselves or each other, or if cutting $M$ along $\gamma_1,\ldots,\gamma_k$ results in a disconnected surface. Otherwise, we call them an SNS $k$-tuple.
\end{definition}

Let $k\le g$. If $\gamma_1,\ldots,\gamma_k$ are SNS then the vectors 
$$
[\gamma_1],\ldots,[\gamma_k]\in H_1(M,\Z)
$$
are linearly independent (in fact, they can be completed to a symplectic basis- see \cite[\S 1.3]{FarbMargalit}), a fact which we use often.

\begin{lem}\label{lem_non_SNS_are_negligible}
Let $f_1,\ldots,f_k:\R\to\R$ be functions bounded uniformly by a constant $C$ independent of $g$, and assume they have compact support. Then
$$
\lim_{g\to\infty}\Ewp\left[
\sum_{\gamma_1,\ldots,\gamma_k\atop{\text{inadmissible}}} f_1(\gamma_1)\cdots f_k(\gamma_k)
\right]=0.
$$\end{lem}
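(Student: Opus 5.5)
Since $|f_i|\le C$ and the $f_i$ are supported in some fixed interval $[0,L]$, I would first bound the sum termwise by
$$
C^k\,\#\{(\gamma_1,\ldots,\gamma_k)\ \text{inadmissible},\ l(\gamma_i)\le L\}.
$$
It then suffices to show that the Weil--Petersson expectation of this count tends to $0$ as $g\to\infty$. The plan reduces this to known results of Mirzakhani--Petri \cite{MP19} and Mirzakhani \cite{Mir13}, which together say that, for a fixed length $L$, a random surface of large genus has, with high probability, no short non-simple or separating closed geodesics. I would organize the inadmissible tuples into three kinds of failure:
\begin{enumerate}[(a)]
\item some $\gamma_i$ is non-simple;
\item all $\gamma_i$ are simple but two of them intersect;
\item the $\gamma_i$ form a simple multicurve whose complement is disconnected.
\end{enumerate}

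\textbf{Step 1 (tuples reduce to one or two curves).} The number of closed geodesics of length $\le L$ is at most $c_L\,(g-1)$ deterministically, since the area is $4\pi(g-1)$; this bound is too weak to use here. I would instead use that the expected number of primitive geodesics of length $\le L$ is bounded uniformly in $g$, by \eqref{eqn_mirzakhanis_integration_with_vol_estimate} applied to nonseparating simple curves, plus Mirzakhani--Petri's result that all short geodesics are simple and nonseparating with probability $1-O_L(1/g)$. For the higher moments I would use their Poisson convergence, which bounds $\Ewp[N_L^k]$ uniformly in $g$.

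\textbf{Step 2 (cases (a) and (b)).} In case (a) or (b), the surface contains a closed geodesic of length $\le 2L$ filling a subsurface with $\chi\le -1$ that is not a one-holed nonseparating configuration. For example, $\gamma_i\cup\gamma_j$ fills a subsurface whose boundary has length $\le 4L$. By \cite[Thm 4.2]{MP19} or Mirzakhani's estimates, the expected number of embedded subsurfaces of bounded boundary length other than a cylinder is $O_L(1/g)$. Hence the probability $p_g$ of this bad event tends to $0$. I would then apply Cauchy--Schwarz:
$$
\Ewp\big[\mathbf 1_{\text{bad}}\,N_{L}^k\big]\le p_g^{1/2}\,\Ewp[N_L^{2k}]^{1/2}\to0 .
$$
The uniform moment bound on $N_L$ comes from Step 1.

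\textbf{Step 3 (case (c)).} If the $\gamma_i$ are disjoint and simple but $M-\bigcup\gamma_i$ is disconnected, I would apply \eqref{eqn_Mirzakhanis_integration_formula} separately for each topological type of such a multicurve. For a fixed $k$ there are finitely many types, up to the genus splits. For each type, $V_g(\Gamma,\mathbf l)/V_g$ carries an extra factor $O(1/g)$ compared with the connected case, by the volume estimates $V_{g_1,n_1}V_{g_2,n_2}/V_g\ll 1/g$ summed over splittings, again from \cite{MP19} and Mirzakhani's volume asymptotics. Since the $\mathbf l$-integral over $[0,L]^k$ is bounded, each contribution is $O_{L,k}(1/g)\to 0$.

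\textbf{Main obstacle.} The main obstacle is Step 2 for case (a). Non-simple geodesics are not directly handled by the integration formula, so one needs the filled-subsurface argument together with the uniform moment bound. I would cite Mirzakhani--Petri's proof that short geodesics are simple with probability $\to1$ and adapt it to pairs.
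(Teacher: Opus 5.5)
\textbf{The gap: the uniform moment bound in Step 1.} Your Step 2 depends on it through Cauchy--Schwarz, and Poisson convergence in distribution gives no control of $\sup_g\Ewp[N_L^{2k}]$. Convergence in law is consistent with $N_L$ being of size $\asymp g$ on an event of probability $\asymp 1/g$. Your first-moment argument survives this, since $O(1/g)\cdot c_Lg=O(1)$. But the $m$-th moment could then be $\asymp g^{m-1}$. Ruling this scenario out is exactly the content of the lemma. With only the deterministic bound you get $\Ewp[\mathbf 1_{\text{bad}}N_L^k]\le p_g(c_Lg)^k=O_L(g^{k-1})$, which does not tend to $0$ for any $k\ge 1$.

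\textbf{Why importing the bound from \cite{MP19} makes your argument redundant.} What Mirzakhani--Petri actually establish is convergence of factorial moments, $\Ewp[(N_L)_m]\to\lambda_L^m$. They prove it by showing that the expected number of non-SNS $m$-tuples with lengths $\le L$ is $O_L(1/g)$. This is precisely the estimate $\Ewp[N_k'(0,\log X)]=O_X(1/g)$ that the paper quotes. The SNS $m$-tuples contribute $\lambda_L^m+o(1)$ by Mirzakhani's formula, so factorial-moment convergence is equivalent to the lemma. Hence there are two possibilities:
\begin{itemize}
\item You import that input from \cite{MP19}. Then your opening reduction to $C^k$ times the number of inadmissible tuples already finishes the proof, which is the paper's one-line argument, and Steps 2--3 are superfluous.
\item You do not import it. Then Step 2 has nothing to stand on.
\end{itemize}

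\textbf{How to make your case analysis self-contained.} In cases (a) and (b), replace ``probability of the bad event times moments'' by a direct count.
\begin{itemize}
\item The curves $\gamma_i$ that are non-simple or meet another $\gamma_j$ fill a subsurface $S$ with $\chi(S)\le -1$, of topology bounded in terms of $k,L$, and with boundary length $\le 2kL$.
\item The number of $k$-tuples of primitive closed geodesics of length $\le L$ contained in such an $S$ is bounded by a constant depending only on $k$ and $L$. This is a Buser-type count, using that the area of $S$ is $2\pi|\chi(S)|$.
\item Apply Mirzakhani's integration formula to the multicurve formed by $\partial S$ together with the remaining disjoint simple $\gamma_i$. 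The condition $\chi(S)\le -1$ produces the extra factor $O(1/g)$ in the volume ratio, exactly as in your Step 3.
\end{itemize}
This is essentially the Mirzakhani--Petri argument behind the estimate the paper cites. Your Step 3 is fine as it stands.
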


\begin{proof}
Let $X> 0$ and denote by $N_k'(0,\log X)$ be the number of $k$-tuples of primitive closed geodesics $\gamma_1,\ldots,\gamma_k$ on $M$ which are not SNS. Mirzakhani and Petri \cite[proof of Proposition 4.2 and Proposition 4.5]{MP19} show that
$$
\Ewp[N_k'(0,\log X)]=O_X(1/g),
$$
from which the result follows immediately. 
\end{proof}

\begin{definition}
Let $d\ge 1$ be an integer. We define
$$I_d(X)=\frac{1}{2}\int_0^{\log(X)}l^{d+1}\parenth{\frac{\sinh(l/2)}{l/2}}^2dl.
$$
\end{definition}

\begin{prop}\label{prop_J_d_principal_bound}
Let $d\ge 1$ be an integer. We have, as $X\to\infty$, 
$$
I_{d}(X)= \frac{1}{2}X\log(X)^{d-1}+\Oh{X\log(X)^{d-2}}.
$$
\end{prop}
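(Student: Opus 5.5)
We rewrite the integrand in elementary terms and integrate directly. Since $\parenth{\frac{\sinh(l/2)}{l/2}}^2=\frac{4\sinh^2(l/2)}{l^2}=\frac{e^l-2+e^{-l}}{l^2}$, we get
$$
I_d(X)=\frac12\int_0^{\log X} l^{d-1}\parenth{e^l-2+e^{-l}}\,dl .
$$
We then treat the three pieces separately. Put $T=\log X$.

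The terms $-2$ and $e^{-l}$ are harmless. The term $-2$ contributes $-\frac{T^d}{d}$, which is $O(T^d)=O(\log(X)^d)$. This is $o(X\log(X)^{d-2})$, since $X$ dominates any power of $\log X$. The term $e^{-l}$ contributes at most $\Gamma(d)=(d-1)!=O(1)$. Both therefore fit inside the stated error term.

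The main term is $\frac12\int_0^T l^{d-1}e^l\,dl$. We integrate by parts once:
$$
\int_0^T l^{d-1}e^l\,dl=T^{d-1}e^T-(d-1)\int_0^T l^{d-2}e^l\,dl .
$$
This holds for $d\ge 2$. When $d=1$ the integral is just $e^T-1=X-1$, which is exact and gives $\frac12 X+O(1)$; here the error term $X\log(X)^{-1}$ certainly absorbs $O(1)$.

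For $d\ge2$, the remaining integral is bounded by $\int_0^T T^{d-2}e^l\,dl\le T^{d-2}e^T=X\log(X)^{d-2}$. Substituting $e^T=X$ and $T=\log X$ gives
$$
\int_0^T l^{d-1}e^l\,dl=X\log(X)^{d-1}+O\parenth{X\log(X)^{d-2}} .
$$
Multiplying by $\frac12$ yields the claim. Alternatively, one can use the exact formula $\int_0^T l^{d-1}e^l\,dl=e^T\sum_{j=0}^{d-1}(-1)^j\frac{(d-1)!}{(d-1-j)!}T^{d-1-j}+(-1)^d(d-1)!$ and read off the two leading orders.

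There is no real obstacle here. The only care needed is to check that the polynomial-size contributions, $O(\log(X)^d)$ and $O(1)$, are dominated by $X\log(X)^{d-2}$, including in the case $d=1$ where that bound is $X/\log X$. The implied constants depend on $d$, which is fixed.
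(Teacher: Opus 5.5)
Your proof is correct and follows the paper's argument essentially verbatim: rewrite $\parenth{\frac{\sinh(l/2)}{l/2}}^2=\frac{e^l-2+e^{-l}}{l^2}$, absorb the $-2$ and $e^{-l}$ pieces into $O(\log(X)^d)$, and integrate $l^{d-1}e^l$ by parts once. Your separate check of the case $d=1$, where the boundary term at $0$ is nonzero and the error bound is $X/\log X$, is a small piece of care that the paper leaves implicit.
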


\begin{proof}
Computing explicitly, we see that 
$$
\int_0^{\log(X)}l^{d+1}\parenth{\frac{\sinh(l/2)}{l/2}}^2dl=\int_0^{\log(X)}l^{d-1}(e^l+e^{-l}-2)dl=\int_0^{\log(X)}l^{d-1}e^l+\Oh{\log(X)^d}.
$$
Using integration by parts we get 
$$
\int_0^{\log(X)}l^{d-1}e^l=[l^{d-1}e^l]^{\log(X)}_0-(d-1)\int_0^{\log(X)}l^{d-2}e^l=X\log(X)^{d-1}+\Oh{X\log(X)^{d-2}}.
$$
\end{proof}

\begin{prop}\label{prop_collision_integrals}
Let $k\ge 1, d_1,\ldots,d_k\ge 1$ be integers. We have

$$
\lim_{g\to\infty}\Ewp\left[\sum_{\gamma_1,\ldots,\gamma_k \text{ unoriented, primitive  SNS}\atop{l(\gamma_i)\le \log X}}l(\gamma_1)^{d_1}\cdots l(\gamma_k)^{d_k}\right]=\prod_{i=1}^k I_{d_i}(X).
$$
\end{prop}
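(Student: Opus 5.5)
Following the Mirzakhani--Petri method, I organize the sum of $l(\gamma_1)^{d_1}\cdots l(\gamma_k)^{d_k}$ over unoriented primitive SNS $k$-tuples with $l(\gamma_i)\le\log X$ by mapping class group orbits, and then apply \eqref{eqn_mirzakhanis_integration_with_vol_estimate} to each orbit. An SNS $k$-tuple is an ordered $k$-tuple of disjoint simple nonseparating curves whose union does not separate $M$. For $k\le g$ there is exactly one $\operatorname{Mod}$-orbit of such non-separating multicurves $\Gamma=(\gamma_1,\ldots,\gamma_k)$ with labelled components, since the cut surface is always $S_{g-k,2k}$ with the boundary identification pattern determined. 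So the sum is a single geometric function $f_\Gamma$ with
$$
f(l_1,\ldots,l_k)=\prod_{i=1}^k l_i^{d_i}\mathbf 1_{[0,\log X]}(l_i).
$$
I will take $g$ large (so $k\le g$ automatically, since $k$ is fixed).

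Next I apply \eqref{eqn_mirzakhanis_integration_with_vol_estimate}. Since $M-\Gamma$ is connected, $C_\Gamma=2^{-k}$, and
$$
\Ewp[f_\Gamma]=2^{-k}\int_{[0,\log X]^k}\prod_{i=1}^k l_i^{d_i+1}\parenth{\frac{\sinh(l_i/2)}{l_i/2}}^2\parenth{1+\Oh{\frac{l_i^2}{g}}}dl_i .
$$
The integrand is a product, and on the compact box $[0,\log X]^k$ the error factors are $1+O_X(1/g)$ uniformly. Letting $g\to\infty$ with $X$ fixed, the integral therefore converges to $\prod_{i=1}^k\frac12\int_0^{\log X}l^{d_i+1}\parenth{\frac{\sinh(l/2)}{l/2}}^2dl=\prod_i I_{d_i}(X)$.

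The main obstacle is the bookkeeping of the constant $C_\Gamma$ against orientation and ordering conventions. I must check that $2^{-k}$ in the stated formula is exactly the factor produced by the half-twist symmetries of each component in the connected-cut case, and that the sum over ordered, unoriented tuples matches $\operatorname{Mod}\cdot\Gamma$ with no extra symmetry factor. Here it helps that the definition of SNS tuples requires the $\gamma_i$ to be distinct, and that the labelled ordered multicurve is what Mirzakhani's formula sums over.

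I would verify this normalization with the case $k=1$, $d=1$, where the expected count of unoriented simple nonseparating geodesics of length $\le L$ is known to be asymptotically $\frac12\int_0^L l\cdot 4\sinh^2(l/2)/l\,dl\cdot(\ldots)$. I would then confirm that the general $k$ case factors with the same normalization. If the formula as quoted already incorporates these conventions, the proof reduces to the two displays above.
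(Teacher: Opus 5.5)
Your proposal is correct and is the paper's argument: the paper's proof is the one-line application of \eqref{eqn_mirzakhanis_integration_with_vol_estimate} to the single $\operatorname{Mod}$-orbit of ordered SNS $k$-tuples with $F=\prod_i l_i^{d_i}\mathbf{1}_{[0,\log X]}(l_i)$, letting $g\to\infty$ on the compact box exactly as you do. The normalization you flag is already built into that formula: $C_\Gamma=2^{-k}$, one factor $\frac12$ per unoriented curve because its two boundary copies on the connected cut surface are unlabelled. For $k=1$ it reproduces the Mirzakhani--Petri mean $\frac12\int_0^L l\cdot\frac{4\sinh^2(l/2)}{l^2}\,dl=\int_0^L\frac{e^t+e^{-t}-2}{2t}\,dt$; note the denominator there is $l^2$, not $l$ as in your sanity check.
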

\begin{proof}
This follows directly from Mirzakhani's integration formula \eqref{eqn_mirzakhanis_integration_with_vol_estimate}.
\end{proof}

\begin{definition}
Let $1\le d\le k$ be two integers. A surjective function $\pi:\{1,\ldots,k\}\to\{1,\ldots,d\}$ is called a set partition. If $g\in S_d$ is permutation, $g\circ\pi$ is another set partition, and we identify between the two, while choosing arbitrarily one of the $d!$ options to be a fixed representative. $d$ is called the length of the partition $\pi$, and the corresponding partition of $k$ is denoted $\ab(\pi)$.
\end{definition}

For $\ab\vdash k$, we denote 
$$
c(\ab)=\#\{\pi\text{ set partition of }\{1,\ldots,k\}:\ab(\pi)=\ab\}.
$$

\begin{definition}
Let $(\gamma_1,\ldots,\gamma_k)$ be a tuple of primitive closed geodesics, and let $\pi$ be a set partition of length $d$. We say that there is a collision of type $\pi$ in the tuple if there exist $d$ primitive geodesics $\eta_1,\ldots,\eta_d$, $\eta_i\neq \eta_j,\eta_j^{-1}$ for all $i\neq j$ and integers $n_1,\ldots,n_k\in\{\pm 1\}$, such that $\gamma_i=\eta_{\pi(i)}^{n_i}$. Let $\ab\vdash k$ be a partition of $k$. We say that a tuple with collision type $\pi$ has collision of type $\ab$ if $\ab(\pi)=\ab$.
\end{definition}

Our analysis will be built on this notion of collision types, each giving a different contribution in the limit $g\to\infty$.

\section{The $k$-th moment in the Gaussian regime}

\subsection{Some simplifications}

\begin{prop}\label{prop_recursion_for_S_M_k}
We have 
$$
S_M^k(X,\Gamma)=\frac{1}{q}\sum_{l(\gamma_i)\le \log X} \Lambda(\gamma_1)\cdots\Lambda(\gamma_k)1_{[\gamma_1]=\cdots=[\gamma_k]\pmod{\Gamma}}-\frac{\Theta(X)^k}{q^k}-\sum_{j=1}^{k-2} \binom{k}{j}\parenth{\frac{\Theta(X)}{q}}^jS_M^{k-j}(X,\Gamma).
$$
\end{prop}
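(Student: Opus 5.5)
Write $T_\alpha=\Theta_M(X;\Gamma,\alpha)$ and $\mu=\Theta(X)/q$; since $\vol(\Gamma)=q$ we have $\mu=\Theta_M(X)/\vol(\Gamma)$. The plan is to expand $(T_\alpha-\mu)^k$ binomially, average over $\alpha$, and identify each piece. Two facts do all the work. First, the classes partition the geodesics, so $\sum_\alpha T_\alpha=\Theta(X)$ and hence $\frac1q\sum_\alpha (T_\alpha-\mu)=0$, i.e. $S_M^1=0$. Second, $\frac1q\sum_\alpha T_\alpha^k$ is exactly the diagonal sum in the statement: expanding $T_\alpha^k$ as a sum over $k$-tuples $(\gamma_1,\ldots,\gamma_k)$ with $l(\gamma_i)\le\log X$ and every $[\gamma_i]=\alpha$, then summing over $\alpha$, gives the sum over all $k$-tuples with $[\gamma_1]=\cdots=[\gamma_k]\pmod\Gamma$.

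Instead of expanding $(T_\alpha-\mu)^k$ directly, I would write $T_\alpha=(T_\alpha-\mu)+\mu$ and expand
$$T_\alpha^k=\sum_{j=0}^{k}\binom{k}{j}\mu^j (T_\alpha-\mu)^{k-j}.$$
Averaging over $\alpha$ with weight $\frac1q$ gives
$$\frac1q\sum_\alpha T_\alpha^k=\sum_{j=0}^k\binom kj \mu^j S_M^{k-j}(X,\Gamma),$$
where $S^0$ is interpreted as $\frac1q\sum_\alpha 1=1$.

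Next I isolate the terms. The $j=0$ term is $S_M^k$. The $j=k$ term is $\mu^k=\Theta(X)^k/q^k$. The $j=k-1$ term is $k\mu^{k-1}S_M^1=0$. The remaining terms $j=1,\ldots,k-2$ form exactly the stated sum. Solving for $S_M^k$ yields the claimed identity.

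No step is a real obstacle. The only care needed is the bookkeeping: checking that $\frac1q\sum_\alpha 1=1$, because there are exactly $q$ classes, so the $j=k$ term is $\mu^k$ rather than $q\mu^k$; and noting that the vanishing of $S^1$ is what removes the $j=k-1$ term, which is why the sum runs only to $k-2$.
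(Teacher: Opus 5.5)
Your proof is correct and is essentially the paper's argument. Like the paper, you write $\Theta_M(X;\Gamma,\alpha)$ as its deviation plus the mean $\Theta(X)/q$, expand binomially, average over the $q$ classes, and identify $\frac{1}{q}\sum_\alpha \Theta_M(X;\Gamma,\alpha)^k$ with the diagonal sum over $k$-tuples; you also make explicit what the paper leaves implicit, namely that the $j=k-1$ term vanishes because $S_M^1(X,\Gamma)=0$, which is why the sum stops at $k-2$.
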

\begin{proof}
We have 
$$
S_M^k(X,\Gamma)=\mean{\parenth{\Theta(X;\Gamma,\alpha)-\mean{\Theta(X;\Gamma,\alpha)}{\alpha\in \Z^{2g}/\Gamma})}^k}{\alpha\in\Z^{2g}/\Gamma}.
$$
We have
$$
\mean{\Theta(X;\Gamma,\alpha)^k}{\alpha\in\Z^{2g}/\Gamma}=
\mean{\parenth{\Theta(X;\Gamma,\alpha)-\mean{\Theta(X;\Gamma,\alpha)}{\alpha\in \Z^{2g}/\Gamma}+\mean{\Theta(X;\Gamma,\alpha)}{\alpha\in \Z^{2g}/\Gamma}}^k}{\alpha\in\Z^{2g}/\Gamma}.
$$
Now the result follows from Newton's binomial formula, after noting that 
$$
\mean{\Theta(X;\Gamma,\alpha)^k}{\alpha\in\Z^{2g}/\Gamma}=\frac{1}{q} \sum_{l(\gamma_i)\le \log X} \Lambda(\gamma_1)\cdots\Lambda(\gamma_k)1_{[\gamma_1]=\cdots=[\gamma_k]\pmod{\Gamma}}.
$$
\end{proof}

\begin{prop}\label{prop_k_vectors_on_lattice}
Let $g> k\ge 1$ be two integers, and let $M$ be a compact hyperbolic surface of genus $g$. Let $\gamma_1,\ldots,\gamma_k$ be closed
geodesics on $M$, and set $u_i=[\gamma_i]-[\gamma_k]\pmod{q}$, viewed as an element of $\F_q^{2g}$. We have 
$$
\meanGamma{1_{[\gamma_1]=\cdots=[\gamma_k]\pmod{\Gamma}}}=\frac{q^{2g-d}-1}{q^{2g}-1}=q^{-d}+O(q^{-2g}),
$$
where $d=\dim_{\F_q}(\mathrm{span}(u_1,\ldots,u_{k-1}))$, and the implicit constant is absolute.
\end{prop}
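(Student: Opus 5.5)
Since $q$ is prime, every $\Gamma\in L_{g,q}$ contains $q\Z^{2g}$. Lattices of index $q$ therefore correspond bijectively to hyperplanes $H=\Gamma/q\Z^{2g}\subset\F_q^{2g}$. Equivalently, they correspond to nonzero linear functionals $\phi\in(\F_q^{2g})^*$ up to scalar multiples, via $H=\ker\phi$. Hence $|L_{g,q}|=\frac{q^{2g}-1}{q-1}$. The plan is to express the indicator as a condition on $\ker\phi$ and count the hyperplanes that satisfy it.

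The first step is to rewrite the condition. We have $[\gamma_1]=\cdots=[\gamma_k]\pmod\Gamma$ if and only if $[\gamma_i]-[\gamma_k]\in\Gamma$ for all $i<k$. This holds if and only if $u_1,\ldots,u_{k-1}\in H$, i.e. $W:=\mathrm{span}(u_1,\ldots,u_{k-1})\subseteq H$. Here we use that membership in $\Gamma$ depends only on the class mod $q$, because $q\Z^{2g}\subseteq\Gamma$.

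The second step is to count hyperplanes containing $W$, where $\dim W=d$. These correspond to nonzero functionals vanishing on $W$, that is, nonzero elements of the annihilator $W^\perp$, which has dimension $2g-d$, taken up to scalars. So their number is $\frac{q^{2g-d}-1}{q-1}$. Dividing by $|L_{g,q}|$ gives $\frac{q^{2g-d}-1}{q^{2g}-1}$.

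The third step is the asymptotic. We compute
$$\frac{q^{2g-d}-1}{q^{2g}-1}-q^{-d}=\frac{q^{2g-d}-1-q^{2g-d}+q^{-d}}{q^{2g}-1}=\frac{q^{-d}-1}{q^{2g}-1},$$
whose absolute value is at most $\frac{1}{q^{2g}-1}\le 2q^{-2g}$. This gives an absolute implied constant. The only point needing care is that $d\le k-1<2g$, so $W^\perp\neq 0$; this holds because $g>k$. There is no real obstacle here. The main subtlety is the reduction from lattices to hyperplanes, which uses that $q$ is prime (so $\Z^{2g}/\Gamma\cong\F_q$ and $q\Z^{2g}\subseteq\Gamma$).
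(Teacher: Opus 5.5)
Your proposal is correct and follows essentially the same route as the paper: you identify index-$q$ lattices with hyperplanes in $\F_q^{2g}$ and count the hyperplanes containing $\mathrm{span}(u_1,\ldots,u_{k-1})$, then make the identical error computation $\frac{q^{-d}-1}{q^{2g}-1}$. The one difference is cosmetic: you count these hyperplanes via the annihilator $W^\perp$, whereas the paper passes to the quotient $\F_q^{2g}/W\cong\F_q^{2g-d}$, and the two are dual formulations of the same count.
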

\begin{proof}
We have 
$$
1_{[\gamma_1]=\cdots=[\gamma_k]\pmod{\Gamma}}=\prod_{i=1}^{k-1}1_{[\gamma_i]-[\gamma_k]=0\pmod{\Gamma}}.
$$
We thus want to compute the expected value (over $\Gamma\in L_{g,q}$) of the indicator that all vectors $[\gamma_i]-[\gamma_k]\in\Gamma$. Since $\Gamma\in L_{g,q}$ corresponds to hyperplanes in $\F_q^{2g}$, we can equivalently count the number of hyperplanes in $\F_q^{2g}$ containing all the vectors $u_1,\ldots,u_{k-1}$. Such a hyperplane corresponds to a hyperplane in $\F_q^{2g}/\mathrm{span}(u_1,\ldots,u_{k-1})\cong \F_q^{2g-d}$, hence the exact result that the average is $\frac{q^{2g-d}-1}{q^{2g}-1}$. The asymptotic estimate follows from 
$$
\frac{q^{2g-d}-1}{q^{2g}-1}-q^{-d}=\frac{q^{-d}-1}{q^{2g}-1}\ll \frac{1}{q^{2g}-1}\ll q^{-2g}.
$$
\end{proof}

\begin{cor}\label{cor_k_vectors_on_lattice_in_terms_of_functionals}
Let $g> k\ge 1$ be two integers, and let $M$ be a compact hyperbolic surface of genus $g$. Let $\gamma_1,\ldots,\gamma_k$ be closed
geodesics on $M$, and set $u_i=[\gamma_i]-[\gamma_k]\pmod{q}$, viewed as an element of $\F_q^{2g}$. We have 
$$
\meanGamma{1_{[\gamma_1]=\cdots=[\gamma_k]\pmod{\Gamma}}}=\#\{(\alpha_1,\ldots,\alpha_{k-1})\in \F_q^{k-1}:\sum_{i=1}^{k-1}\alpha_iu_i=0\}\cdot q^{-(k-1)}+O(q^{-2g}).
$$
\end{cor}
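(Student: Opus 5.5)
This follows directly from Proposition \ref{prop_k_vectors_on_lattice}. Write $d=\dim_{\F_q}(\mathrm{span}(u_1,\ldots,u_{k-1}))$; that proposition gives
$$
\meanGamma{1_{[\gamma_1]=\cdots=[\gamma_k]\pmod{\Gamma}}}=q^{-d}+O(q^{-2g}).
$$
So it suffices to show the exact identity
$$
\#\{(\alpha_1,\ldots,\alpha_{k-1})\in \F_q^{k-1}:\textstyle\sum_{i=1}^{k-1}\alpha_iu_i=0\}\cdot q^{-(k-1)}=q^{-d}.
$$

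To prove it, consider the $\F_q$-linear map
$$
T:\F_q^{k-1}\to \F_q^{2g},\qquad T(\alpha_1,\ldots,\alpha_{k-1})=\sum_{i=1}^{k-1}\alpha_iu_i .
$$
The set being counted is exactly $\ker T$. The image of $T$ is $\mathrm{span}(u_1,\ldots,u_{k-1})$, which has dimension $d$. By rank–nullity, $\dim\ker T=k-1-d$, so $|\ker T|=q^{k-1-d}$. Multiplying by $q^{-(k-1)}$ gives $q^{-d}$, as required.

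Substituting this into Proposition \ref{prop_k_vectors_on_lattice} gives the corollary. The error term $O(q^{-2g})$ carries over unchanged, with the same absolute constant.

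There is no real obstacle here; the content is the rank–nullity reformulation. This form is presumably preferred because the count of linear relations can be written as a sum over $\alpha\in\F_q^{k-1}$ of the indicators $1_{\sum\alpha_i u_i=0}$. Each such indicator depends on the single homology class $\sum_i\alpha_i([\gamma_i]-[\gamma_k])$ mod $q$, which is convenient for the later moment computations.
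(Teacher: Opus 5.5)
Your proof is correct and is the paper's argument: both write $q^{-d}=q^{k-1-d}\cdot q^{-(k-1)}$, identify $q^{k-1-d}$ as the number of relations $(\alpha_1,\ldots,\alpha_{k-1})\in\F_q^{k-1}$ with $\sum_i\alpha_iu_i=0$, and then apply Proposition \ref{prop_k_vectors_on_lattice}. Your rank--nullity argument for the map $T$ states more cleanly what the paper calls the count of ``functionals in $k-1$ variables vanishing on $u_1,\ldots,u_{k-1}$''.
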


\begin{proof}
We have 
$$
q^{-d}=q^{k-1-d}\cdot q^{-(k-1)},d=\dim_{\F_q}(\mathrm{span}(u_1,\ldots,u_{k-1}).
$$
Here, $q^{k-1-d}$ is the number of different functionals in $k-1$ variables vanishing on $u_1,\ldots,u_{k-1}$, therefore the result now follows from Proposition \ref{prop_k_vectors_on_lattice}.
\end{proof}

\begin{definition}
Let $(\alpha_1,\ldots,\alpha_{k-1})\in\F_q^{k-1}$. Define 
$$
C_{(\alpha_1,\ldots,\alpha_{k-1})}^k(X)=\sum_{N(\gamma_{1}),\ldots,N(\gamma_k)\le X\atop{\alpha_1u_1+\cdots+\alpha_{k-1}u_{k-1}=0}}\Lambda(\gamma_1)\cdots\Lambda(\gamma_k),
$$
where $u_i=[\gamma_i]-[\gamma_k]$.
\end{definition}

\begin{prop}\label{prop_j_coordinates_zero_recursion}
Let $j=1,\ldots,k-2$, and let $(\alpha_1,\ldots,\alpha_{k-1})\in\F_q^{k-1}$ be such that $\alpha_1=\cdots=\alpha_j=0$. Then
$$
C_{(\alpha_1,\ldots,\alpha_{k-1})}^k(X)=\Theta(X)^j C_{(\alpha_{j+1},\ldots,\alpha_{k-1})}^{k-j}(X).
$$
\end{prop}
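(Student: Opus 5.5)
The plan is to observe that when $\alpha_1=\cdots=\alpha_j=0$, the linear constraint in the definition of $C^k_{(\alpha_1,\ldots,\alpha_{k-1})}(X)$ does not involve the geodesics $\gamma_1,\ldots,\gamma_j$ at all. The sum over those $j$ geodesics then factors out, and what remains is, after relabelling, exactly a sum of the form $C^{k-j}$.

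\textbf{Step 1: drop the vanishing terms.} With $u_i=[\gamma_i]-[\gamma_k] \pmod q$, the constraint
$$\sum_{i=1}^{k-1}\alpha_i u_i=0$$
reduces, because $\alpha_1=\cdots=\alpha_j=0$, to
$$\sum_{i=j+1}^{k-1}\alpha_i\bigl([\gamma_i]-[\gamma_k]\bigr)=0 \pmod q.$$
This condition depends only on $\gamma_{j+1},\ldots,\gamma_k$.

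\textbf{Step 2: factor the sum.} The summand $\Lambda(\gamma_1)\cdots\Lambda(\gamma_k)$ is a product, and the range $N(\gamma_i)\le X$ is a product set. Hence the full sum splits as
$$\Bigl(\prod_{i=1}^{j}\sum_{N(\gamma_i)\le X}\Lambda(\gamma_i)\Bigr)\cdot \sum_{\substack{N(\gamma_{j+1}),\ldots,N(\gamma_k)\le X\\ \sum_{i>j}\alpha_i u_i=0}}\Lambda(\gamma_{j+1})\cdots\Lambda(\gamma_k).$$
Each factor in the first product equals $\Theta(X)$ by definition, since sums are over oriented primitive geodesics. The first product is therefore $\Theta(X)^j$.

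\textbf{Step 3: identify the remaining sum.} Rename $\gamma'_m=\gamma_{j+m}$ for $m=1,\ldots,k-j$, and $\alpha'_m=\alpha_{j+m}$ for $m=1,\ldots,k-j-1$. Then $\gamma'_{k-j}=\gamma_k$, so
$$u'_m:=[\gamma'_m]-[\gamma'_{k-j}]=u_{j+m}.$$
The remaining sum is therefore exactly $C^{k-j}_{(\alpha_{j+1},\ldots,\alpha_{k-1})}(X)$ as defined.

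The hypothesis $j\le k-2$ guarantees that $k-j\ge 2$, so the coefficient vector $(\alpha_{j+1},\ldots,\alpha_{k-1})$ is nonempty and $C^{k-j}$ is defined. The only real care point is that the last geodesic $\gamma_k$, the reference point of the differences, stays in the second group; it does, since only indices $\le j\le k-2$ are removed. There is no genuine obstacle; the argument is a direct bookkeeping factorization.
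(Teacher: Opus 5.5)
Your proposal is correct and follows essentially the same route as the paper: the vanishing coefficients $\alpha_1=\cdots=\alpha_j=0$ make the constraint independent of $\gamma_1,\ldots,\gamma_j$. The sum over those $j$ geodesics then factors off as $\Theta(X)^j$, and what remains is exactly $C^{k-j}_{(\alpha_{j+1},\ldots,\alpha_{k-1})}(X)$, with $\gamma_k$ still serving as the reference geodesic.
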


\begin{proof}
We have
\begin{multline*}
    C_{(\alpha_1,\ldots,\alpha_{k-1})}^k(X)=\sum_{N(\gamma_{1}),\ldots,N(\gamma_k)\le X\atop{\alpha_1u_1+\cdots+\alpha_{k-1}u_{k-1}=0}}\Lambda(\gamma_1)\cdots\Lambda(\gamma_k)=\\
    =\sum_{N(\gamma_1),\ldots,N(\gamma_j)\le X}\Lambda(\gamma_1)\cdots\Lambda(\gamma_j)\sum_{N(\gamma_{j+1}),\ldots,N(\gamma_k)\le X\atop{\alpha_{j+1}u_{j+1}+\cdots+\alpha_{k-1}u_{k-1}}=0}\Lambda(\gamma_{j+1})\cdots\Lambda(\gamma_{k})=\Theta(X)^jC_{(\alpha_{j+1},\ldots,\alpha_{k-1})}^{k-j}(X).
\end{multline*}
\end{proof}

\begin{prop}\label{prop_all_coordinates_zero_recursion}
Let $k>1$ be an integer. Then,
$$
C_{(0,\ldots,0)}^k(X)=\Theta(X)^k.
$$
\end{prop}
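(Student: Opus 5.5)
This is immediate from the definition of $C^k_{(\alpha_1,\ldots,\alpha_{k-1})}(X)$. When every coefficient $\alpha_i$ is zero, the linear condition $\alpha_1u_1+\cdots+\alpha_{k-1}u_{k-1}=0$ reads $0=0$ in $\F_q^{2g}$. So it is satisfied by every $k$-tuple $(\gamma_1,\ldots,\gamma_k)$ of primitive closed geodesics, whatever their homology classes.

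The sum therefore runs over all $k$-tuples with $N(\gamma_i)\le X$ for each $i$, with no coupling between the indices. Since the summand $\Lambda(\gamma_1)\cdots\Lambda(\gamma_k)$ is a product of functions of the individual $\gamma_i$, the sum factors as
$$
C^k_{(0,\ldots,0)}(X)=\prod_{i=1}^k\sum_{N(\gamma_i)\le X}\Lambda(\gamma_i)=\Theta(X)^k,
$$
by the definition of $\Theta_M(X)$.

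One could also try to derive this from Proposition \ref{prop_j_coordinates_zero_recursion} with $j=k-2$, which reduces the claim to $C^2_{(0)}(X)=\Theta(X)^2$. But that base case is exactly the same trivial factorization, so the direct argument above is the proof; only the case $k=2$ is not covered by the recursion's range. There is no real obstacle beyond noting that the constraint disappears.
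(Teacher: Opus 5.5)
Your proposal is correct and matches the paper's own proof: with all coefficients zero the linear constraint is vacuous, so the sum runs over all $k$-tuples with $N(\gamma_i)\le X$ and factors as $\Theta(X)^k$. Your side remark about the recursion with $j=k-2$ is accurate, but the argument does not need it.
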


\begin{proof}
We have
$$
C_{(0,\ldots,0)}^k(X)=\sum_{N(\gamma_1),\ldots,N(\gamma_k)\le X}\Lambda(\gamma_1)\cdots\Lambda(\gamma_k)=\Theta(X)^k.
$$
\end{proof}

\begin{prop}\label{prop_S_M_mean_as_sum_over_C_alphas}
We have for $k\ge 1$ integer
$$
\meanGamma{S_M^k(X,\Gamma)}=\frac{1}{q^k}\sum_{(\alpha_1,\ldots,\alpha_{k-1})\in \F_q^{k-1}\atop{\alpha_1\cdots\alpha_{k-1}\cdot(\alpha_1+\cdots+\alpha_{k-1})\neq 0}}C_{(\alpha_1,\ldots,\alpha_{k-1})}^k(X)+\Oh{\frac{\Theta(X)^k}{q^{2g}}}
$$
\end{prop}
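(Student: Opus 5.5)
We argue by induction on $k$, combining the recursion of Proposition \ref{prop_recursion_for_S_M_k} with the functional expansion of Corollary \ref{cor_k_vectors_on_lattice_in_terms_of_functionals}. Averaging the recursion over $\Gamma\in L_{g,q}$ (note $\Theta(X)$ does not depend on $\Gamma$), and applying Corollary \ref{cor_k_vectors_on_lattice_in_terms_of_functionals} to each tuple in the first term, then exchanging the order of summation, we get
$$
\meanGamma{S_M^k(X,\Gamma)}=\frac{1}{q^k}\sum_{\alpha\in\F_q^{k-1}}C^k_\alpha(X)-\frac{\Theta(X)^k}{q^k}-\sum_{j=1}^{k-2}\binom{k}{j}\parenth{\frac{\Theta(X)}{q}}^j\meanGamma{S_M^{k-j}(X,\Gamma)}+\Oh{\frac{\Theta(X)^k}{q^{2g}}}.
$$
The $\Oh{q^{-2g}}$ error per tuple, summed with weights $\Lambda(\gamma_1)\cdots\Lambda(\gamma_k)$, gives $\Oh{\Theta(X)^k q^{-2g}}$ after division by $q$. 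The base case $k=1$ is trivial, since $S^1_M=0$ and the sum on the right is empty. For $k=2$ the condition is $\alpha_1\neq0$, and the formula follows directly from Proposition \ref{prop_all_coordinates_zero_recursion}.

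Next we split $\sum_{\alpha\in\F_q^{k-1}}C^k_\alpha$ according to the zero pattern of $\alpha$. The sum $\alpha_1+\cdots+\alpha_{k-1}$ should be handled as a coefficient too. Writing $\sum_{i<k}\alpha_iu_i=\sum_{i<k}\alpha_i[\gamma_i]-(\sum_i\alpha_i)[\gamma_k]$, set $\beta_i=\alpha_i$ for $i<k$ and $\beta_k=-\sum_{i<k}\alpha_i$. Then $C^k_\alpha$ counts $k$-tuples with $\sum_{i=1}^k\beta_i[\gamma_i]\equiv0$, where $\beta\in\F_q^k$ and $\sum\beta_i=0$. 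This presentation is symmetric in the $k$ indices, and the excluded set is exactly those $\alpha$ for which some $\beta_i=0$. For a vector $\beta$ with zero set $Z$, $|Z|=j$, the argument of Proposition \ref{prop_j_coordinates_zero_recursion}, extended to an arbitrary set of coordinates using the symmetry, gives $C=\Theta(X)^j\,C'$. Here $C'$ is the analogous count for the $k-j$ remaining geodesics with a fully nonzero coefficient vector summing to zero. The case $j=k$ is Proposition \ref{prop_all_coordinates_zero_recursion}, and $j=k-1$ is impossible since a single nonzero $\beta_i$ cannot sum to zero.

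Hence, writing $T^m:=\sum_{\beta\in(\F_q^*)^m,\ \sum\beta_i=0}C_\beta^m$ (so $T^m/q^m$ is the main term for $m$), we obtain
$$
\sum_{\alpha}C^k_\alpha=\Theta^k+\sum_{j=1}^{k-2}\binom{k}{j}\Theta^jT^{k-j}+T^k.
$$
By the induction hypothesis, $T^{k-j}=q^{k-j}\meanGamma{S_M^{k-j}}+\Oh{\Theta^{k-j}q^{k-j-2g}}$. Substituting, the term $\Theta^k/q^k$ cancels, and each term $\binom{k}{j}q^{-k}\Theta^jT^{k-j}$ cancels the corresponding $\binom{k}{j}(\Theta/q)^j\meanGamma{S_M^{k-j}}$. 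This leaves $T^k/q^k$, which is the claimed sum, plus errors $\Oh{\Theta^kq^{-j-2g}}$ absorbed into $\Oh{\Theta^kq^{-2g}}$.

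The main obstacle is the combinatorial bookkeeping. We must check that the $\binom{k}{j}$ choices of the zero set $Z$ among the $k$ symmetric coordinates match the binomial coefficients in the recursion. We must also verify the reduction $C=\Theta^jC'$ when $k\in Z$, which requires re-choosing the base geodesic. Since $C_\beta$ depends only on the relation $\sum\beta_i[\gamma_i]=0$, the base point is irrelevant, and the reduction goes through.
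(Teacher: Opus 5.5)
Your proposal is correct and takes essentially the same route as the paper. You argue by induction on $k$, average the recursion of Proposition \ref{prop_recursion_for_S_M_k} using Corollary \ref{cor_k_vectors_on_lattice_in_terms_of_functionals}, split $\sum_\alpha C^k_\alpha$ according to how many of $\alpha_1,\ldots,\alpha_{k-1},-(\alpha_1+\cdots+\alpha_{k-1})$ vanish, and cancel the degenerate terms against the induction hypothesis. Your symmetric $\beta$-presentation spells out the paper's ``up to relabeling'' step, including the case where the base index lies in the zero set and the impossibility of $j=k-1$.
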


\begin{proof}
We prove this by induction on $k$. The case of $k=1$ is trivial. Now, using Proposition \ref{prop_recursion_for_S_M_k} and Corollary \ref{cor_k_vectors_on_lattice_in_terms_of_functionals} we get that

$$
\meanGamma{S_M^k(X,\Gamma)}=\frac{1}{q^k}\sum_{(\alpha_1,\ldots,\alpha_{k-1})\in\F_{q}^{k-1}} C_{(\alpha_1,\ldots,\alpha_{k-1})}^k(X)-\frac{\Theta(X)^k}{q^k}-\sum_{j=1}^{k-2} \binom{k}{j}\parenth{\frac{\Theta(X)}{q}}^j\meanGamma{S_M^{k-j}(X,\Gamma)}.
$$

Let $j=1,\ldots,k-2$. Suppose that among the $k$ variables $\alpha_1,\ldots,\alpha_{k-1},\alpha_1+\cdots+\alpha_{k-1}$, precisely $j$ are zero. In this case we say that the point $(\alpha_1,\ldots,\alpha_{k-1})$ is $j$-degenerate. Then, up to relabeling of the geodesics $\gamma_1,\ldots,\gamma_k$, due to Proposition \ref{prop_j_coordinates_zero_recursion} we have
$$
\sum_{(\alpha_1,\ldots,\alpha_{k-1})\in\F_{q}^{k-1}\atop{\text{is  }j\text{-degenerate}}}C^{k}_{(\alpha_1,\ldots,\alpha_{k-1})}(X)=\binom{k}{j}\Theta(X)^j \sum_{(\alpha_{j+1},\ldots,\alpha_{k-1})\in\F_q^{k-j-1}\atop{\alpha_{j+1}\cdots\alpha_{k-1}\cdot (\alpha_{j+1}+\cdots+\alpha_{k-1})\neq 0}}C_{(\alpha_{j+1},\ldots,\alpha_{k-1})}^{k-j}(X).
$$
Moreover, using Proposition \ref{prop_all_coordinates_zero_recursion} we get that
$$
C_{(0,\ldots,0)}^k(X)=\Theta(X)^k.
$$
Combining both of these results, and using the induction hypothesis, we get the result.
\end{proof}

\subsection{Estimating $C_{(\alpha_1,\ldots,\alpha_{k-1})}^k(X)$ via collision types}

\begin{definition}
Let $(\alpha_1,\ldots,\alpha_{k-1})\in \F_q^{k-1}$, $\pi$ a set partition of $\{1,\ldots,k\}$. Define
$$
C_{(\alpha_1,\ldots,\alpha_{k-1})}^k(X,\pi)=\sum_{N(\gamma_1),\ldots,N(\gamma_k)\le X\atop{\alpha_1u_1+\cdots+\alpha_{k-1}u_{k-1}=0\atop{\gamma_1,\ldots,\gamma_k\text{ have collision type }\pi}}}\Lambda(\gamma_1)\cdots\Lambda(\gamma_k),
$$
and similarly for $\ab\vdash k$ define
$$
C_{(\alpha_1,\ldots,\alpha_{k-1})}^k(X,\ab)=\sum_{N(\gamma_1),\ldots,N(\gamma_k)\le X\atop{\alpha_1u_1+\cdots+\alpha_{k-1}u_{k-1}=0\atop{\gamma_1,\ldots,\gamma_k\text{ have collision type }\ab}}}\Lambda(\gamma_1)\cdots\Lambda(\gamma_k).
$$
\end{definition}

For the rest of the section, given $(\alpha_1,\ldots,\alpha_{k-1})\in\F_q^{k-1}$ we let $\alpha_k=-(\alpha_1+\cdots+\alpha_{k-1})$ unless explicitly stated otherwise.

\begin{definition}
Let $\pi$ be a set partition of $\{1,\ldots,k\}$ of length $d$, and let $\epsilon_1,\ldots,\epsilon_k\in\{\pm 1\}$. Define the system of $d$ linear equations in the variables $\alpha_1,\ldots,\alpha_{k-1}$ by
$$
\mathcal{S}(\pi;\epsilon_1,\ldots,\epsilon_k)=\left\{\sum_{i:\pi(i)=j} \alpha_i\epsilon_i=0,j=1,\ldots,d\right\}.
$$
Also, define 
$$\mathcal{S}(\pi)=\sum_{\epsilon_1,\ldots,\epsilon_k\in\{\pm 1\}^k}\#\{\text{Solutions }(\alpha_1,\ldots,\alpha_{k-1})\in\F_{q}^{k-1}\text{ to }\mathcal{S}(\pi;\epsilon_1,\ldots,\epsilon_k)\text{ with $\alpha_1\cdots\alpha_k\neq 0$}\}.$$ 
\end{definition}

\begin{prop}\label{prop_sum_c_alphas_pi_in_terms_of_integrals_and_S_pi}
Let $\ab\vdash k$ a partition of $k$ of length $d=l(\ab)$, and let $\pi$ be a set partition of $\{1,\ldots,k\}$, whose corresponding partition of $k$ is $\ab$. Then, as $X\to\infty$,
$$
\sum_{(\alpha_1,\ldots,\alpha_{k-1})\in\F_q^{k-1}\atop{\alpha_1\cdots\alpha_{k}\neq 0}}\lim_{g\to\infty} \Ewp[C_{(\alpha_1,\ldots,\alpha_{k-1})}^k(X,\pi)]=\mathcal{S}(\pi)\prod_{j=1}^d I_{a_j}(X).
$$
\end{prop}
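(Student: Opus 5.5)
Fix $(\alpha_1,\ldots,\alpha_{k-1})$ with $\alpha_1\cdots\alpha_k\neq0$ and $\pi$ of length $d$. Every tuple $(\gamma_1,\ldots,\gamma_k)$ with collision type $\pi$ is obtained from pairwise distinct (up to orientation) primitive geodesics $\eta_1,\ldots,\eta_d$ and signs $n_i\in\{\pm1\}$ via $\gamma_i=\eta_{\pi(i)}^{n_i}$. I will rewrite $C^k_{(\alpha_1,\ldots,\alpha_{k-1})}(X,\pi)$ as a sum over unoriented $d$-tuples $(\eta_1,\ldots,\eta_d)$ and over sign vectors. Fix an orientation of each $\eta_j$. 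The $2^k$ sign choices $(n_1,\ldots,n_k)$ then give exactly the tuples of type $\pi$ built on these $\eta_j$. Changing the chosen orientation of $\eta_j$ just permutes the signs, so summing over all sign vectors makes the count orientation-independent.

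Since $\sum_{i=1}^{k-1}\alpha_iu_i=\sum_{i=1}^{k-1}\alpha_i[\gamma_i]+\alpha_k[\gamma_k]$, the homological condition becomes $\sum_{j=1}^d\big(\sum_{i:\pi(i)=j}\alpha_i n_i\big)[\eta_j]\equiv 0\pmod q$. The weights become $\Lambda(\gamma_i)=l(\eta_{\pi(i)})$, so the product of weights is $\prod_j l(\eta_j)^{a_j}$. The condition $N(\gamma_i)\le X$ becomes $l(\eta_j)\le\log X$.

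Next I split the unoriented $d$-tuples into SNS and non-SNS ones. The summand is bounded, compactly supported (lengths at most $\log X$), and independent of $g$. Hence Lemma~\ref{lem_non_SNS_are_negligible}, applied with $f_j(\eta)=l(\eta)^{a_j}1_{l(\eta)\le\log X}$ and the sum over the finitely many sign patterns and $\alpha$'s, shows that the non-SNS contribution vanishes as $g\to\infty$. For SNS tuples, with $d\le k<g$, the classes $[\eta_1],\ldots,[\eta_d]$ extend to a symplectic basis of $H_1(M,\Z)$, so their reductions mod $q$ are linearly independent in $\F_q^{2g}$. The homological condition therefore holds if and only if $\sum_{i:\pi(i)=j}\alpha_in_i=0$ in $\F_q$ for every $j$, which is exactly the system $\mathcal{S}(\pi;n_1,\ldots,n_k)$. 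This condition depends only on $(\alpha,n)$ and not on the geodesics, so it factors out of the geometric sum.

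Summing over $\alpha$ with $\alpha_1\cdots\alpha_k\neq0$ and over the sign vectors $n$, the combinatorial factor is exactly $\mathcal{S}(\pi)$. What remains is $\lim_{g\to\infty}\Ewp$ of $\sum l(\eta_1)^{a_1}\cdots l(\eta_d)^{a_d}$ over unoriented primitive SNS $d$-tuples with $l(\eta_j)\le\log X$. By Proposition~\ref{prop_collision_integrals} this equals $\prod_j I_{a_j}(X)$. The sum over $\alpha$ is finite, since $q$ is fixed while $g\to\infty$, so the limit may be taken term by term.

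I expect the main obstacle to be the bookkeeping in the first step: checking that orientations, the ordering of the $\eta_j$ (fixed by the chosen representative of $\pi$), and the requirement $\eta_i\neq\eta_j^{\pm1}$ give a bijection with neither overcounting nor a stray factor of $2^d$. The key point is that Proposition~\ref{prop_collision_integrals} sums over unoriented geodesics, while summing over all $2^k$ sign vectors accounts for every oriented choice exactly once. I would check this first on the case $d=k$, $\pi$ the identity.
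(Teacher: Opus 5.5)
Your proposal is correct and follows essentially the same route as the paper: discard non-SNS tuples via Lemma~\ref{lem_non_SNS_are_negligible}, parametrize tuples of collision type $\pi$ by unoriented $\eta_1,\ldots,\eta_d$ together with signs so that the homology condition becomes $\mathcal{S}(\pi;\epsilon_1,\ldots,\epsilon_k)$, sum over $\alpha$ and signs to obtain $\mathcal{S}(\pi)$, and conclude with Proposition~\ref{prop_collision_integrals}. Your write-up also makes explicit two points the paper leaves implicit: that SNS classes are linearly independent mod $q$ because they extend to a symplectic basis, and that the $\alpha$-sum is finite, so it may be interchanged with $\lim_{g\to\infty}$.
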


\begin{proof}
Using Lemma \ref{lem_non_SNS_are_negligible}, we get that

\begin{equation}\label{eq_C_alphas_ab_as_sum_over_primitive_sns}
\lim_{g\to\infty}\Ewp[C_{(\alpha_1,\ldots,\alpha_{k-1})}^k(X,\pi)]=\lim_{g\to\infty}\Ewp\left[
\sum_{N(\gamma_1),\ldots,N(\gamma_k)\le X\atop{\alpha_1 u_1+\cdots+\alpha_{k-1}u_{k-1}=0\atop{\gamma_1,\ldots,\gamma_k \text{ have collision type }\pi\atop{\text{primitive, SNS}}}}}\Lambda(\gamma_1)\cdots\Lambda(\gamma_k).
\right]
\end{equation}

Fix a $k$-tuple $\gamma_1,\ldots,\gamma_k$ with collision type $\pi$. That is, if $\pi(i)=\pi(j)=r$ then there is a simple, oriented geodesic $\eta_{r}$ such that $\gamma_i=\eta_r^{\epsilon_i}$, $\gamma_j=\eta_r^{\epsilon_j}$. Then, denoting $\alpha_k=-(\alpha_1+\cdots+\alpha_{k-1})$, the homology condition 
$$
\alpha_1u_1+\cdots+\alpha_{k-1}u_{k-1}=0
$$
translates to the system of $d$ linear equations $\mathcal{S}(\pi;\epsilon_1,\ldots,\epsilon_k)$, in the $k-1$ variables $\alpha_1,\ldots,\alpha_{k-1}$. The tuple $\gamma_1,\ldots,\gamma_k$ contributes to the sum \eqref{eq_C_alphas_ab_as_sum_over_primitive_sns} if and only if $\mathcal{S}(\pi;\epsilon_1,\ldots,\epsilon_k)$ is satisfied for $\alpha_1,\ldots,\alpha_{k-1}$. If we consider the contribution of the underlying $k$-tuple of simple unoriented geodesics $\eta_1,\ldots,\eta_d$, then they contribute to the sum \eqref{eq_C_alphas_ab_as_sum_over_primitive_sns} an amount of
$$
\sum_{\epsilon_1,\ldots,\epsilon_k\in \{\pm 1\}^k} 1_{\mathcal{S}(\pi;\epsilon_1,\ldots,\epsilon_k)\text{ is satisfied}\atop{\text{for }(\alpha_1,\ldots,\alpha_{k-1})}}\Lambda(\eta_1)^{a_1}\cdots\Lambda(\eta_d)^{a_d}.
$$

When we sum over all possible $(\alpha_1,\ldots,\alpha_{k-1})\in\F_q^{k-1}$ with $\alpha_1\cdots\alpha_{k}\neq 0$, we get that the contribution of the $k$-tuple of simple, unoriented geodesics $\eta_1,\ldots,\eta_d$ is

$$
\mathcal{S}(\pi)\Lambda(\eta_1)^{a_1}\cdots\Lambda(\eta_d)^{a_d}.
$$

We get that

$$
\sum_{(\alpha_1,\ldots,\alpha_{k-1})\in\F_q^{k-1}\atop{\alpha_1\cdots\alpha_{k}\neq 0}}\lim_{g\to\infty} \Ewp[C_{(\alpha_1,\ldots,\alpha_{k-1})}^k(X,\pi)]=\mathcal{S}(\pi)\lim_{g\to\infty}\Ewp\left[
\sum_{N(\eta_1),\ldots,N(\eta_d)\le X\atop{\text{unoriented, SNS}}}\Lambda(\eta_1)^{a_1}\cdots\Lambda(\eta_d)^{a_d}
\right].
$$

Using Propositions \ref{prop_collision_integrals} and \ref{prop_J_d_principal_bound}, we see that

$$
\sum_{(\alpha_1,\ldots,\alpha_{k-1})\in\F_q^{k-1}\atop{\alpha_1\cdots\alpha_{k}\neq 0}}\lim_{g\to\infty}\Ewp[C_{(\alpha_1,\ldots,\alpha_{k-1})}^k(X,\pi)]=\mathcal{S}(\pi)\prod_{j=1}^d I_{a_j}(X).
$$
\end{proof}

\begin{cor}\label{cor_partitions_with_block_of_1_do_not_contribute} Let $\ab\vdash k$ be a partition of $k$ with at least one part equal to $1$. Then, we have
$$
\sum_{(\alpha_1,\ldots,\alpha_{k-1})\in\F_q^{k-1}\atop{\alpha_1,\ldots\alpha_k\neq 0}}\lim_{g\to\infty}\Ewp[C_{(\alpha_1,\ldots,\alpha_{k-1})}^k(X,\ab)]=0.
$$
\end{cor}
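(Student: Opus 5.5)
The corollary reduces to Proposition \ref{prop_sum_c_alphas_pi_in_terms_of_integrals_and_S_pi} once we show that $\mathcal{S}(\pi)=0$ whenever $\pi$ has a singleton block. Since a tuple of collision type $\ab$ has collision type $\pi$ for exactly one representative set partition $\pi$ with $\ab(\pi)=\ab$, we first decompose
$$
C_{(\alpha_1,\ldots,\alpha_{k-1})}^k(X,\ab)=\sum_{\pi:\ab(\pi)=\ab}C_{(\alpha_1,\ldots,\alpha_{k-1})}^k(X,\pi).
$$
This is a finite sum of $c(\ab)$ terms, so it commutes with $\lim_{g\to\infty}\Ewp$ and with the finite sum over $(\alpha_1,\ldots,\alpha_{k-1})$. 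Applying Proposition \ref{prop_sum_c_alphas_pi_in_terms_of_integrals_and_S_pi} to each $\pi$ then gives
$$
\sum_{(\alpha_1,\ldots,\alpha_{k-1})\atop{\alpha_1\cdots\alpha_k\neq 0}}\lim_{g\to\infty}\Ewp[C_{(\alpha_1,\ldots,\alpha_{k-1})}^k(X,\ab)]=\sum_{\pi:\ab(\pi)=\ab}\mathcal{S}(\pi)\prod_{j=1}^d I_{a_j}(X).
$$
It therefore suffices to prove $\mathcal{S}(\pi)=0$ for every such $\pi$.

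Fix $\pi$ with $\ab(\pi)=\ab$. Since $\ab$ has a part equal to $1$, some block $\pi^{-1}(j)=\{i\}$ is a singleton. For any choice of signs $\epsilon_1,\ldots,\epsilon_k$, the $j$-th equation of $\mathcal{S}(\pi;\epsilon_1,\ldots,\epsilon_k)$ reads $\epsilon_i\alpha_i=0$, i.e.\ $\alpha_i=0$, where $\alpha_k=-(\alpha_1+\cdots+\alpha_{k-1})$ is understood when $i=k$. This contradicts the constraint $\alpha_1\cdots\alpha_k\neq 0$ in the definition of $\mathcal{S}(\pi)$. So for every sign pattern the set of admissible solutions is empty, and summing over the $2^k$ sign patterns gives $\mathcal{S}(\pi)=0$.

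The only point that needs care is bookkeeping: the case $i=k$ is handled by the convention $\alpha_k=-(\alpha_1+\cdots+\alpha_{k-1})$, which is exactly the variable appearing in the $k$-th equation. The other step is to confirm that the translation of the homology condition into $\mathcal{S}(\pi;\epsilon)$ used in Proposition \ref{prop_sum_c_alphas_pi_in_terms_of_integrals_and_S_pi} indeed produces the equation $\alpha_i\epsilon_i=0$ for a singleton block. This follows from the linear independence of the classes $[\eta_1],\ldots,[\eta_d]$ for SNS tuples, since $\sum_i\alpha_i[\gamma_i]=\sum_j\bigl(\sum_{\pi(i)=j}\alpha_i\epsilon_i\bigr)[\eta_j]$.
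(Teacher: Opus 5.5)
Your proposal is correct and matches the paper's proof: you decompose $C^k_{(\alpha_1,\ldots,\alpha_{k-1})}(X,\ab)$ over the set partitions $\pi$ with $\ab(\pi)=\ab$, apply Proposition \ref{prop_sum_c_alphas_pi_in_terms_of_integrals_and_S_pi}, and note that a singleton block forces $\epsilon_i\alpha_i=0$, which gives $\mathcal{S}(\pi)=0$. Your remarks on the case $i=k$ and on the linear independence of the $[\eta_j]$ only spell out what the paper covers with ``without loss of generality'' and inside the earlier proposition.
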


\begin{proof} We have

$$
C_{(\alpha_1,\ldots,\alpha_{k-1})}^k(X,\ab)=\sum_{\pi\text{ set partition of}\atop{\{1,\ldots,k\}\atop{\text{with }\ab(\pi)=\ab}}} C_{(\alpha_1,\ldots,\alpha_{k-1})}^k(X,\pi).
$$
Therefore, using Proposition \ref{prop_sum_c_alphas_pi_in_terms_of_integrals_and_S_pi}, we see that to prove the corollary it is enough to show that for any set partition $\pi$ with $\ab(\pi)=\ab$, we have $\mathcal{S}(\pi)=0$. Fix $\epsilon_1,\ldots,\epsilon_k\in\{\pm 1\}$. Without loss of generality, assume that $|\pi^{-1}(\pi(1))|=1$ (i.e. $1$ is in a singleton block of $\pi$). Then the system of equations $\mathcal{S}(\pi;\epsilon_1,\ldots,\epsilon_k)$ contains the equation $\epsilon_1\alpha_1=0$, which implies $\alpha_1=0$. Therefore, it has no solutions with $\alpha_1\cdots\alpha_k\neq 0$, and hence $\mathcal{S}(\pi)=0$.
\end{proof}

\begin{prop}\label{prop_bound_on_C_alphas_ab}
Let $\ab\vdash k$ be a partition of length $d=l(\ab)$. Assume that every part of $\ab$ is at least $2$. Then, as $X\to\infty$,
$$
\sum_{(\alpha_1,\ldots,\alpha_{k-1})\in\F_q^{k-1}\atop{\alpha_1\cdots\alpha_k\neq 0}}\lim_{g\to\infty}\Ewp[C_{(\alpha_1,\ldots,\alpha_{k-1})}^k(X,\ab)]\ll q^{k-d}X^{d}\log(X)^{k-d}.
$$
\end{prop}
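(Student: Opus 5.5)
We decompose $C^k_{(\alpha_1,\ldots,\alpha_{k-1})}(X,\ab)$ as a sum over the finitely many set partitions $\pi$ with $\ab(\pi)=\ab$, exactly as in the proof of Corollary \ref{cor_partitions_with_block_of_1_do_not_contribute}. Since the number $c(\ab)$ of such $\pi$ depends only on $k$, it suffices to prove the bound for each fixed $\pi$. For a fixed $\pi$, Proposition \ref{prop_sum_c_alphas_pi_in_terms_of_integrals_and_S_pi} evaluates the left-hand side exactly as $\mathcal{S}(\pi)\prod_{j=1}^d I_{a_j}(X)$. The proof therefore reduces to two estimates: one for the integrals $I_{a_j}(X)$ and one for the count $\mathcal{S}(\pi)$.

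For the integrals we use Proposition \ref{prop_J_d_principal_bound}. Since each $a_j\ge 2\ge 1$, it gives $I_{a_j}(X)\ll X\log(X)^{a_j-1}$. Multiplying over the $d$ blocks and using $\sum_j a_j=k$, we obtain
\[
\prod_{j=1}^d I_{a_j}(X)\ll X^d\log(X)^{k-d}.
\]

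For $\mathcal{S}(\pi)$ we need $\mathcal{S}(\pi)\ll q^{k-d}$. The sum over signs $\epsilon\in\{\pm1\}^k$ contributes only the bounded factor $2^k$, so it is enough to bound the number of solutions $(\alpha_1,\ldots,\alpha_{k-1})\in\F_q^{k-1}$ of a single system $\mathcal{S}(\pi;\epsilon_1,\ldots,\epsilon_k)$ by $q^{k-d}$. The convenient way to see this is to work with all $k$ variables $\alpha_1,\ldots,\alpha_k$ subject to the extra relation $\alpha_1+\cdots+\alpha_k=0$; this parametrization is a bijection with $\F_q^{k-1}$. Each equation $\sum_{i:\pi(i)=j}\epsilon_i\alpha_i=0$ involves only the variables of the block $\pi^{-1}(j)$. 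The blocks are disjoint and each coefficient is nonzero, so these $d$ equations are linearly independent on $\F_q^k$ and cut out a subspace of dimension $k-d$. Imposing $\sum\alpha_i=0$ can only shrink the solution set, so the number of solutions is at most $q^{k-d}$. We can simply drop the constraint $\alpha_1\cdots\alpha_k\neq0$, since it only reduces the count. Hence $\mathcal{S}(\pi)\le 2^kq^{k-d}$.

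Combining the two estimates and summing over the $c(\ab)$ set partitions gives the bound in the statement, with an implied constant depending only on $k$. The only step requiring care is the independence count. The point is that the relation $\alpha_k=-\sum_{i<k}\alpha_i$ couples the blocks, so one should not try to count directly in the $k-1$ free variables. Passing to the $k$-variable formulation makes the block structure diagonal, and the crude upper bound $q^{k-d}$ then suffices.
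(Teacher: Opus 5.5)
Your proof is correct and follows the paper's argument. You decompose over set partitions $\pi$ with $\ab(\pi)=\ab$, apply Proposition \ref{prop_sum_c_alphas_pi_in_terms_of_integrals_and_S_pi}, bound $\mathcal{S}(\pi)\le 2^kq^{k-d}$ by a dimension count, and bound $\prod_j I_{a_j}(X)$ via Proposition \ref{prop_J_d_principal_bound}. The only cosmetic difference is in the count: the paper stays in the $k-1$ free variables and discards the equation of the block containing $k$, leaving $d-1$ equations with disjoint supports and hence at most $q^{(k-1)-(d-1)}$ solutions, while you add $\alpha_k$ and intersect with the hyperplane $\sum_i\alpha_i=0$; both give $q^{k-d}$, so your closing remark that one should not count directly in the $k-1$ variables overstates the difficulty.
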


\begin{proof}
Let $\pi$ be a set partition of $\{1,\ldots,k\}$ with $\ab(\pi)=\ab$. We claim that the number of solutions $(\alpha_1,\ldots,\alpha_{k-1})\in\F_q^{k-1}$ to the system of equations $\mathcal{S}(\pi;\epsilon_1,\ldots,\epsilon_k)$ with $\alpha_1\cdots\alpha_k\neq 0$ is $\le q^{k-d}$. Assume without loss of generality that $\pi(k)=1$. $\mathcal{S}(\pi;\epsilon_1,\ldots,\epsilon_k)$ contains the 
$$
\sum_{r:\pi(r)=i}\epsilon_r\alpha_r=0,2\le i\le d.
$$
These are $d-1$ linearly independent equations, since the variables involved in the $i$-th equation are $\pi^{-1}(i)$ and these sets are disjoint for $i\neq j$ (when we exclude the first equation). We get that the number of solutions to $\mathcal{S}(\pi;\epsilon_1,\ldots,\epsilon_k)\le q^{k-1-(d-1)}=q^{k-d}$. Therefore, $\mathcal{S}(\pi)\le 2^kq^{k-d}$. 

Now, using Proposition \ref{prop_sum_c_alphas_pi_in_terms_of_integrals_and_S_pi}, we get that
$$
\sum_{(\alpha_1,\ldots,\alpha_{k-1})\in\F_q^{k-1}\atop{\alpha_1\cdots\alpha_k\neq 0}}\lim_{g\to\infty}\Ewp[C_{(\alpha_1,\ldots,\alpha_{k-1})}^k(X,\pi)]\le 2^kq^{k-d}\prod_{j=1}^d I_{a_j}(X).
$$
Summing this over all $\pi$ with $\ab(\pi)=\ab$, we get (using Proposition \ref{prop_J_d_principal_bound})
$$
\sum_{(\alpha_1,\ldots,\alpha_{k-1})\in\F_q^{k-1}\atop{\alpha_1\cdots\alpha_k\neq 0}}\lim_{g\to\infty}\Ewp[C_{(\alpha_1,\ldots,\alpha_{k-1})}^k(X,\ab)]\ll q^{k-d}\prod_{j=1}^d I_{a_j}(X)\ll q^{k-d}X^d\log(X)^{k-d}.
$$
\end{proof}

\begin{cor}\label{cor_non_pairing_partitions_have_no_contribution}
Let $q=q(X)$ be a sequence of odd primes, $q\to\infty$, such that $\frac{X}{q\log X}\to\infty$. Let $\ab\vdash k$ be a partition of length $d=l(\ab)$, and denote $\sigma=\sqrt{\frac{1}{q}X\log X}$. Suppose that $d<k/2$. Then, as $X\to\infty$,
$$
\frac{1}{q^k}\sum_{(\alpha_1,\ldots,\alpha_{k-1})\in\F_q^{k-1}\atop{\alpha_1\cdots\alpha_k\neq 0}}\lim_{g\to\infty}\Ewp[C^k_{(\alpha_1,\ldots,\alpha_{k-1})}(X,\ab)]\ll\parenth{\frac{X}{q\log(X)}}^{-1/2}\sigma^k.
$$
\end{cor}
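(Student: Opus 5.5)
If $\ab$ has a part equal to $1$, there is nothing to prove. By Corollary \ref{cor_partitions_with_block_of_1_do_not_contribute} the left-hand side is then identically $0$.

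So we may assume every part of $\ab$ is at least $2$. In that case we apply Proposition \ref{prop_bound_on_C_alphas_ab} directly and get
$$
\frac{1}{q^k}\sum_{(\alpha_1,\ldots,\alpha_{k-1})\in\F_q^{k-1}\atop{\alpha_1\cdots\alpha_k\neq 0}}\lim_{g\to\infty}\Ewp[C^k_{(\alpha_1,\ldots,\alpha_{k-1})}(X,\ab)]\ll q^{-d}X^d\log(X)^{k-d}.
$$
Now compare with $\sigma^k=\parenth{\frac{X\log X}{q}}^{k/2}$. The ratio is
$$
\frac{q^{-d}X^d\log(X)^{k-d}}{\sigma^k}=\parenth{\frac{X}{q}}^{d-k/2}\log(X)^{k/2-d}=\parenth{\frac{X}{q\log X}}^{-(k/2-d)}.
$$

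It remains to compare the exponent with $1/2$. Since $d$ and $k$ are integers with $d<k/2$, we have $2d\le k-1$, so $k/2-d\ge 1/2$. The hypothesis $\frac{X}{q\log X}\to\infty$ gives $\frac{X}{q\log X}\ge 1$ for $X$ large. Hence $\parenth{\frac{X}{q\log X}}^{-(k/2-d)}\le \parenth{\frac{X}{q\log X}}^{-1/2}$, which proves the claimed bound.

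There is no real obstacle here. The only point requiring care is the integrality step $k/2-d\ge 1/2$, together with the case split that handles partitions containing a singleton block via Corollary \ref{cor_partitions_with_block_of_1_do_not_contribute}. That split matters because Proposition \ref{prop_bound_on_C_alphas_ab} assumes all parts are at least $2$.
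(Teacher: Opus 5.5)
Your proof is correct and follows the paper's argument: apply Proposition \ref{prop_bound_on_C_alphas_ab}, divide by $\sigma^k$ to obtain $\parenth{\frac{X}{q\log X}}^{d-k/2}$, and use integrality of $k$ and $d$ to get $k/2-d\ge 1/2$. Two things you do are left implicit in the paper: you dispose of partitions with a part equal to $1$ via Corollary \ref{cor_partitions_with_block_of_1_do_not_contribute}, since Proposition \ref{prop_bound_on_C_alphas_ab} is stated only for parts at least $2$, and you note that $\frac{X}{q\log X}\ge 1$ for large $X$.
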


\begin{proof}
Using Proposition \ref{prop_bound_on_C_alphas_ab}, we get that
$$
\frac{1}{q^k}\sum_{(\alpha_1,\ldots,\alpha_{k-1})\in\F_q^{k-1}\atop{\alpha_1\cdots\alpha_k\neq 0}}\lim_{g\to\infty}\Ewp[C^k_{(\alpha_1,\ldots,\alpha_{k-1})}(X,\ab)]\ll q^{-d}X^d\log(X)^{k-d}.
$$
Dividing by $\sigma^k$ we get 
$$
\frac{\frac{1}{q^k}\sum_{(\alpha_1,\ldots,\alpha_{k-1})\in\F_q^{k-1}\atop{\alpha_1\cdots\alpha_k\neq 0}}\lim_{g\to\infty}\Ewp[C^k_{(\alpha_1,\ldots,\alpha_{k-1})}(X,\ab)]}{\sigma^k}\ll q^{k/2-d}X^{d-k/2}\log(X)^{k/2-d}=\parenth{\frac{X}{q\log(X)}}^{d-k/2}.
$$
We finish the proof by noticing that since $k,d$ are positive integers, with $d<k/2$, $k/2-d\ge 1/2$.
\end{proof}

\begin{cor}\label{cor_main_term_contribution_is_only_of_pairings}
Let $q=q(X)$ be a sequence of odd primes, $q\to\infty$, such that $\frac{X}{q\log X}\to\infty$. Denote $\sigma=\sqrt{\frac{1}{q}X\log X}$. As $X\to\infty$ we have
\begin{multline*}
\frac{1}{q^k}\sum_{(\alpha_1,\ldots,\alpha_{k-1})\in\F_q^{k-1}\atop{\alpha_1\cdots\alpha_k\neq 0}}\lim_{g\to\infty}\Ewp[C_{(\alpha,\ldots,\alpha_{k-1})}^k(X)]=\\
=\frac{1}{q^k}\sum_{\pi\text{ set partition of }\atop{\{1,\ldots,k\}\atop{\ab(\pi)=(2,\ldots,2)}}}\sum_{(\alpha_1,\ldots,\alpha_{k-1})\in\F_q^{k-1}\atop{\alpha_1\cdots\alpha_k\neq 0}}\lim_{g\to\infty}\Ewp [C_{(\alpha_1,\ldots,\alpha_{k-1})}^k(X,\pi)]
+\Oh{\sigma^k\parenth{\frac{X}{q\log X}}^{-1/2}}.    
\end{multline*}
In particular if $k$ is odd,
$$
\frac{1}{q^k}\sum_{(\alpha_1,\ldots,\alpha_{k-1})\in\F_q^{k-1}\atop{\alpha_1\cdots\alpha_k\neq 0}}\lim_{g\to\infty}\Ewp[C_{(\alpha,\ldots,\alpha_{k-1})}^k(X)]=\Oh{\sigma^k\parenth{\frac{X}{q\log X}}^{-1/2}}.
$$
\end{cor}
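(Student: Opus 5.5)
The plan is to decompose $C^k_{(\alpha_1,\ldots,\alpha_{k-1})}(X)$ according to collision types and show that only pairings survive at the scale $\sigma^k$. Every $k$-tuple $(\gamma_1,\ldots,\gamma_k)$ of primitive geodesics has exactly one collision type $\pi$: group the indices $i,j$ for which $\gamma_i=\gamma_j^{\pm1}$. Hence
$$
C^k_{(\alpha_1,\ldots,\alpha_{k-1})}(X)=\sum_{\ab\vdash k}C^k_{(\alpha_1,\ldots,\alpha_{k-1})}(X,\ab).
$$
The number of partitions of $k$ is finite and depends only on $k$. So, after taking $\lim_{g\to\infty}\Ewp$, summing over $(\alpha_1,\ldots,\alpha_{k-1})$ with $\alpha_1\cdots\alpha_k\neq 0$, and dividing by $q^k$, it suffices to treat each $\ab$ separately. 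Interchanging the limit with the finite sums is legitimate because each summand has a limit by Proposition \ref{prop_sum_c_alphas_pi_in_terms_of_integrals_and_S_pi}.

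Next, sort the partitions $\ab$ into three cases.

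\begin{enumerate}
\item If $\ab$ has a part equal to $1$, its contribution is exactly $0$ by Corollary \ref{cor_partitions_with_block_of_1_do_not_contribute}.
\item If every part of $\ab$ is at least $2$, then its length satisfies $d=l(\ab)\le k/2$, with equality exactly when $\ab=(2,\ldots,2)$.
\item If every part is at least $2$ and $d<k/2$, Corollary \ref{cor_non_pairing_partitions_have_no_contribution} bounds the normalized contribution by $\Oh{\sigma^k(X/(q\log X))^{-1/2}}$.
\end{enumerate}

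Summing the finitely many error terms from the third case gives the stated error. The only remaining type is $\ab=(2,\ldots,2)$. Expanding $C^k(X,\ab)=\sum_{\pi:\ab(\pi)=\ab}C^k(X,\pi)$ and interchanging the finite sum over $\pi$ with the sum over $\alpha$ gives the displayed main term.

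For odd $k$ there is no partition with all parts equal to $2$, so the main-term sum is empty. Every surviving partition then has all parts $\ge2$ and $d\le (k-1)/2<k/2$. This yields the second claim.

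The step I expect to need the most care is the bookkeeping in the case analysis. One check is that the collision-type decomposition is exhaustive and disjoint. This holds because distinct $\eta_r$ are required to satisfy $\eta_i\neq\eta_j^{\pm1}$, which makes $\pi$ uniquely determined up to relabeling. The other check is that the length constraint $d<k/2$ covers all non-pairing partitions without singletons. Both are elementary, so the proof should be short.
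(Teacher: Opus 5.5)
Your proposal is correct and follows the paper's own argument. The paper's proof is the one-line remark that the statement follows from Corollary~\ref{cor_partitions_with_block_of_1_do_not_contribute} (partitions with a singleton block contribute zero) together with Corollary~\ref{cor_non_pairing_partitions_have_no_contribution} (partitions with $d<k/2$ contribute $O(\sigma^k(X/(q\log X))^{-1/2})$); you have simply made explicit the bookkeeping behind it, namely that $k\ge 2d$ for partitions without singletons, with equality only for pairings, and that for odd $k$ no pairing exists.
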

\begin{proof}
This is a direct consequence of corollaries \ref{cor_partitions_with_block_of_1_do_not_contribute} and \ref{cor_non_pairing_partitions_have_no_contribution}.
\end{proof}

\subsection{The contribution of pairings and the $k$-th moment}

To compute $\lim_{g\to\infty}\Ewp[\meanGamma{S_M^k(X,\Gamma)}]$, using Corollary \ref{cor_main_term_contribution_is_only_of_pairings}, the remaining ingredient is to understand the contribution of pairings $\pi$- set partitions of $\{1,\ldots,k\}$ with all parts equal to $2$. We do this by estimating the quantity $\mathcal{S}(\pi)$.

\begin{prop}\label{prop_S_pi_of_pairings}
Let $k=2d$ be an even integer, $\pi$ be a pairing of $\{1,\ldots,k\}$. Then, 

$$\mathcal{S}(\pi)=2^dq^{k-d}+\Oh{q^{k-d-1}}.$$
\end{prop}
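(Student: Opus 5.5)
Fix a pairing $\pi$ with blocks $\{i_j,i'_j\}$, $j=1,\ldots,d$, and fix signs $\epsilon_1,\ldots,\epsilon_k\in\{\pm1\}$. The plan is to count, for each sign pattern, the solutions of $\mathcal{S}(\pi;\epsilon_1,\ldots,\epsilon_k)$ with all $\alpha_i\neq0$, and then sum over the $2^k$ patterns.

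\textbf{Reparametrising the system.} Regard $(\alpha_1,\ldots,\alpha_k)$ as a point of the hyperplane $H=\{\alpha_1+\cdots+\alpha_k=0\}\subset\F_q^k$; this is the same as choosing $(\alpha_1,\ldots,\alpha_{k-1})$ freely and setting $\alpha_k=-(\alpha_1+\cdots+\alpha_{k-1})$. The equations of $\mathcal{S}(\pi;\epsilon)$ are $\epsilon_{i_j}\alpha_{i_j}+\epsilon_{i'_j}\alpha_{i'_j}=0$, that is, $\alpha_{i'_j}=-\epsilon_{i_j}\epsilon_{i'_j}\alpha_{i_j}$. So a solution is determined by free values $\beta_j=\alpha_{i_j}$, $j=1,\ldots,d$, subject to the single constraint that the point lies in $H$:
$$\sum_{j=1}^d(1-\epsilon_{i_j}\epsilon_{i'_j})\beta_j=0.$$
Each coefficient is $0$ or $2$, and $2\neq0$ since $q$ is odd. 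The nonvanishing condition $\alpha_1\cdots\alpha_k\neq0$ becomes $\beta_j\neq0$ for all $j$.

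\textbf{Case analysis by sign pattern.}
\begin{itemize}
\item \emph{Every pair has equal signs}, i.e.\ $\epsilon_{i_j}=\epsilon_{i'_j}$ for all $j$. Then the constraint is vacuous, and the number of solutions is $(q-1)^d$.
\item \emph{At least one pair has opposite signs.} Then the constraint is one nontrivial linear equation $\sum_{j\in J}\beta_j=0$ with $J\neq\emptyset$. The number of solutions with all $\beta_j\neq0$ is then at most $(q-1)^{d-1}$ (when $|J|=1$ it is in fact $0$).
\end{itemize}

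\textbf{Summing over signs.} There are exactly $2^d$ sign patterns in the first case, since each pair can be $(+,+)$ or $(-,-)$. The remaining patterns number at most $2^k$, each contributing $O(q^{d-1})$. Hence
$$\mathcal{S}(\pi)=2^d(q-1)^d+O(q^{d-1})=2^dq^d+O(q^{d-1}).$$
Since $k=2d$, we have $q^d=q^{k-d}$, which gives the claimed $2^dq^{k-d}+O(q^{k-d-1})$.

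\textbf{Main obstacle.} The one point needing care is that the variables are really $\alpha_1,\ldots,\alpha_{k-1}$, with $\alpha_k$ a dependent quantity. Rewriting the system on the hyperplane $H$ handles this. The dimension count then goes as follows: $d$ pairing equations plus the defining equation of $H$, acting on $k$ coordinates, leave $d$ degrees of freedom exactly when the hyperplane equation is implied by the pairing equations. That happens precisely in the equal-signs case, so the leading constant $2^d$ comes from that case alone.
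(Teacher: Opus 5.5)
Your proof is correct and follows essentially the same route as the paper. Both split the sign patterns into the $2^d$ patterns that are constant on each pair, where $\alpha_1+\cdots+\alpha_k=0$ is implied by the pairing equations and the count is $q^{k-d}$ up to lower order, and the remaining patterns, where that equation becomes an independent extra constraint and the count is $O(q^{k-d-1})$. Your explicit $\beta_j$-parametrization additionally gives the exact count $(q-1)^d$ in the equal-signs case and makes visible where $q$ odd is used.
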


\begin{proof}
Call $\epsilon_1,\ldots,\epsilon_k\in\{\pm 1\}^k$ $\pi$-compatible if for all $1\le i,j\le k$, if $\pi(i)=\pi(j)$ then $\epsilon_i=\epsilon_j$. We claim that:

\begin{enumerate}
    \item If $\epsilon_1,\ldots,\epsilon_k$ are $\pi$-compatible, the number of solutions $\alpha_1,\ldots,\alpha_{k-1}\in\F_q^{k-1}$ of $S(\pi;\epsilon_1,\ldots,\epsilon_k)$ with $\alpha_1\cdots\alpha_k\neq 0$ is $q^{k-d}+O(q^{k-d-1})$. 
    \item If $\epsilon_1,\ldots,\epsilon_k$ are not $\pi$-compatible, the number of solutions  $\alpha_1,\ldots,\alpha_{k-1}\in\F_q^{k-1}$ of $S(\pi;\epsilon_1,\ldots,\epsilon_k)$ with $\alpha_1\cdots\alpha_k\neq 0$ is $O(q^{k-d-1})$. 
\end{enumerate}
This will give the result, because the number of $\epsilon_1,\ldots,\epsilon_k$ which are $\pi$-compatible is $2^d$. 

\case{$\epsilon_1,\ldots,\epsilon_k$ are $\pi$-compatible} For $i=1,\ldots,d$ denote $1\le i_1,i_2\le k$ the two indices with $\pi(i_1)=\pi(i_2)=i$. Our system of equations is 
$$
\xi_i (\alpha_{i_1}+\alpha_{i_2})=0, i=1,\ldots,d,\xi_i\in\{\pm 1\},
$$
which is equivalent to the system of equations 
$$
\alpha_{i_1}+\alpha_{i_2}=0, i=1,\ldots,d.
$$
If we view this as a system of linear equations in the variables $\alpha_1,\ldots,\alpha_k\in\F_q^k$ with the additional equation $\alpha_1+\cdots+\alpha_k=0$, then it is a system of $d$ linearly independent equations; therefore the number of solutions is $q^{k-d}$. We need to remove solutions with either $\alpha_1,\ldots,\alpha_k=0$. Each such condition cuts a hyperplane so the number of solutions we remove is at most $kq^{k-d-1}\ll q^{k-d-1}$.

\case{$\epsilon_1,\ldots,\epsilon_k$ are not $\pi$-compatible} Using the same notation as in the previous case, our system of equations in this case is 
$$
\epsilon_{i_1}\alpha_{i_1}+\epsilon_{i_2}\alpha_{i_2},i=1,\ldots,d.
$$
For at least one value of $i$, $\epsilon_{i_1}=-\epsilon_{i_2}$. Without loss of generality we assume that $\epsilon_{i_1}=1,\epsilon_{i_2}=-1$ (otherwise we multiply by a constant to get an equivalent equation). We count solutions without the restrictions that $\alpha_1,\ldots,\alpha_k\neq 0$. Recalling that $\alpha_1+\cdots+\alpha_k=0$, we view our system of equations as a system of $d+1$ linear equations in $k$ variables $\alpha_1,\ldots,\alpha_k$ over $\F_q$, namely
$$
\alpha_1+\cdots+\alpha_k=0,\alpha_{i_1}=\alpha_{i_2},\{\epsilon_{j_1}\alpha_{j_1}+\epsilon_{j_2}\alpha_{j_2=0}\}_{j=1,\ldots,d\atop j\neq i}.
$$
We claim that this system forms a system of $d+1$ linearly independent equations. Since the variables in the last $d$ equations are disjoint, they are clearly linearly independent. So to assume the contrary means that there are some $c_1,\ldots,c_d\in\F_q$ such that 
$$
\alpha_1+\cdots+\alpha_k=c_i\alpha_{i_1}-c_i\alpha_{i_2}+\sum_{j=1\atop{j\neq i}}^d(c_j\epsilon_{j_1}\alpha_{j_1}+c_j\epsilon_{j_2}\alpha_{j_2}).
$$
But this is a contradiction since comparing the coefficients of $\alpha_{i_1}$ on both sides we get $c_i=1$, and comparing the coefficients of $\alpha_{i_2}$ on both side we get $c_i=-1$. Therefore the number of solutions is at most $q^{k-d-1}$.
\end{proof}

\begin{cor}\label{cor_total_contribution_of_pairings}
Let $k=2d$ be an even integer, $\pi$ be a pairing of $\{1,\ldots,k\}$. Let $\sigma=\sqrt{\frac{1}{q}X\log X}.$ We have, as $X\to\infty$,
$$
\frac{1}{q^k}\sum_{(\alpha_1,\ldots,\alpha_{k-1})\in\F_q^{k-1}\atop{\alpha_1\cdots\alpha_k\neq0}} \lim_{g\to\infty} \Ewp[C_{(\alpha_1,\ldots,\alpha_{k-1})}(X,\pi)]=\sigma^k+\Oh{\parenth{\frac{1}{\log X}+\frac{1}{q}}\sigma^k}.
$$
\end{cor}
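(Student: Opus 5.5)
The plan is to combine Proposition \ref{prop_sum_c_alphas_pi_in_terms_of_integrals_and_S_pi} with Proposition \ref{prop_S_pi_of_pairings}, and then to evaluate the integrals using Proposition \ref{prop_J_d_principal_bound}. Since $\pi$ is a pairing, the associated partition is $\ab(\pi)=(2,\ldots,2)$, of length $d=k/2$. Proposition \ref{prop_sum_c_alphas_pi_in_terms_of_integrals_and_S_pi} therefore gives
$$
\sum_{(\alpha_1,\ldots,\alpha_{k-1})\in\F_q^{k-1}\atop{\alpha_1\cdots\alpha_k\neq 0}}\lim_{g\to\infty}\Ewp[C^k_{(\alpha_1,\ldots,\alpha_{k-1})}(X,\pi)]=\mathcal{S}(\pi)\, I_2(X)^d .
$$

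We next insert the two asymptotics. By Proposition \ref{prop_S_pi_of_pairings}, $\mathcal{S}(\pi)=2^dq^{k-d}\parenth{1+\Oh{1/q}}$. By Proposition \ref{prop_J_d_principal_bound} with exponent $2$,
$$
I_2(X)=\tfrac12 X\log X+\Oh{X}=\tfrac12 X\log X\parenth{1+\Oh{1/\log X}}.
$$
Raising this to the fixed power $d$ gives $I_2(X)^d=2^{-d}(X\log X)^d\parenth{1+\Oh{1/\log X}}$; since $d$ is fixed, the implied constant depends only on $k$.

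Multiplying, the factors $2^d$ and $2^{-d}$ cancel. Dividing by $q^k$ and using $k-d=d$ yields
$$
q^{-k}\,q^{d}\,(X\log X)^d\parenth{1+\Oh{\tfrac1q}}\parenth{1+\Oh{\tfrac1{\log X}}}=\parenth{\frac{X\log X}{q}}^{d}\parenth{1+\Oh{\tfrac1q+\tfrac1{\log X}}}.
$$
Because $\parenth{\frac{X\log X}{q}}^{d}=\sigma^{2d}=\sigma^k$, this is exactly the claimed estimate. The cross term $\Oh{1/(q\log X)}$ is absorbed into the stated error.

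I do not expect any step to be a real obstacle, since all the substantive counting is already contained in the earlier propositions. The one point to check is the bookkeeping of the factors of $2$: the $2^d$ in $\mathcal{S}(\pi)$ counts the $\pi$-compatible orientations, and it must cancel exactly against the $\tfrac12$ in the definition of $I_d$. That $\tfrac12$ comes from summing over unoriented geodesics in Proposition \ref{prop_collision_integrals}, as opposed to the oriented geodesics in $C^k$.
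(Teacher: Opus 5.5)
Your proposal is correct and follows the paper's proof essentially line by line. You use Proposition \ref{prop_sum_c_alphas_pi_in_terms_of_integrals_and_S_pi} to reduce to $q^{-k}\mathcal{S}(\pi)I_2(X)^d$, then insert $\mathcal{S}(\pi)=2^dq^{k-d}+\Oh{q^{k-d-1}}$ from Proposition \ref{prop_S_pi_of_pairings} and $I_2(X)=\frac12 X\log X+\Oh{X}$ from Proposition \ref{prop_J_d_principal_bound}, and conclude from $q^{-d}(X\log X)^d=\sigma^k$, with the factors of $2$ and the relative errors $\Oh{1/q}$ and $\Oh{1/\log X}$ tracked correctly.
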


\begin{proof}
By Proposition \ref{prop_sum_c_alphas_pi_in_terms_of_integrals_and_S_pi}, we have
$$
\frac{1}{q^k}\sum_{(\alpha_1,\ldots,\alpha_{k-1})\in\F_q^{k-1}\atop{\alpha_1\cdots\alpha_k\neq0}} \lim_{g\to\infty} \Ewp[C_{(\alpha_1,\ldots,\alpha_{k-1})}(X,\pi)]=q^{-k}\mathcal{S}(\pi)I_2(X)^d.
$$
Using Proposition \ref{prop_S_pi_of_pairings} we get that
$$
q^{-k}\mathcal{S}(\pi)I_2(X)^d=2^dq^{-d}I_2(X)^d+\Oh{q^{-d-1}I_2(X)^d}.
$$
By Proposition \ref{prop_J_d_principal_bound}, $I_2(X)\sim \frac{1}{2}X\log X+\Oh{X}$. Therefore, $I_2(X)^d\sim 2^{-d}X^{d}\log(X)^d+\Oh{X^d\log(X)^{d-1}}$. We get that
$$
q^{-k}\mathcal{S}(\pi)I_2(X)^d=\sigma^k+\Oh{\parenth{\frac{1}{\log X}+\frac{1}{q}}\sigma^k},
$$
and the result follows.
\end{proof}

\begin{proof}[Proof of Theorem \ref{thm_gaussian_regime}]
Using Proposition \ref{prop_S_M_mean_as_sum_over_C_alphas} and Corollary \ref{cor_main_term_contribution_is_only_of_pairings} we get
\begin{multline*}
\lim_{g\to\infty}\frac{\Ewp[\meanGamma{S_M^k(X,\Gamma)}]}{\sigma^k}=\\
=\frac{1}{\sigma^kq^k}\sum_{\pi\text{ set partition of }\atop{\{1,\ldots,k\}\atop{\ab(\pi)=(2,\ldots,2)}}}\sum_{(\alpha_1,\ldots,\alpha_{k-1})\in\F_q^{k-1}\atop{\alpha_1\cdots\alpha_k\neq 0}}\lim_{g\to\infty}\Ewp[C_{(\alpha_1,\ldots,\alpha_{k-1})}^k(X,\pi)]
+\Oh{\parenth{\frac{X}{q\log X}}^{-1/2}}. 
\end{multline*}
Now using Corollary \ref{cor_total_contribution_of_pairings}, we get that
$$
\lim_{g\to\infty}\frac{\Ewp[\meanGamma{S_M^k(X,\Gamma)}]}{\sigma^k}=\#\{\text{pairings on }\{1,\ldots,k\}\}+\Oh{\frac{1}{\log X}+\frac{1}{q}+\parenth{\frac{X}{q\log X}}^{-1/2}}.
$$
Taking $X\to\infty$ we get the result.
\end{proof}

\section{The $k$-th moment in the Poisson regime}

In the Poisson regime, where $\frac{X}{q}\sim\lambda\log X$, the estimate of the error in Corollary \ref{cor_non_pairing_partitions_have_no_contribution} is not enough. And indeed, every set partition $\pi$ without a singleton block contributes to the main term of $\lim_{g\to\infty}\Ewp[\meanGamma{S_M^k(X,\Gamma)}]$. To compute the contribution we need a better estimate of $\mathcal{S}(\pi)$, similar in spirit to the estimate from Proposition \ref{prop_S_pi_of_pairings} for the case of $\pi$ a pairing.

\begin{prop}\label{prop_S_pi_general_partition_with_no_singleton_blocks}
Let $k$ be an integer, $\pi$ a set partition of $\{1,\ldots,k\}$ of length $d$ without a singleton block. Then,
$$
\mathcal{S}(\pi)=2^dq^{k-d}+\Oh{q^{k-d-1}}.
$$
\end{prop}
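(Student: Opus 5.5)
The plan is to mirror the proof of Proposition \ref{prop_S_pi_of_pairings}: split the $2^k$ sign vectors $(\epsilon_1,\ldots,\epsilon_k)$ according to whether they are $\pi$-compatible, meaning $\epsilon_i=\epsilon_j$ whenever $\pi(i)=\pi(j)$. There are exactly $2^d$ compatible sign vectors. The claim will follow from two facts. First, each compatible sign vector contributes $q^{k-d}+\Oh{q^{k-d-1}}$ solutions with $\alpha_1\cdots\alpha_k\neq 0$. Second, each incompatible one contributes $\Oh{q^{k-d-1}}$ solutions. Throughout, I regard the system as living in the $k$ variables $\alpha_1,\ldots,\alpha_k$, together with the extra equation $\alpha_1+\cdots+\alpha_k=0$.

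\textbf{Compatible case.} Each block equation $\sum_{i\in\pi^{-1}(j)}\epsilon_i\alpha_i=0$ becomes, after dividing by the common sign, $\sum_{i\in\pi^{-1}(j)}\alpha_i=0$. These $d$ equations involve disjoint sets of variables, so they are linearly independent. Their sum is exactly $\alpha_1+\cdots+\alpha_k=0$, so the extra equation is redundant. The solution space in $\F_q^k$ therefore has dimension $k-d$, giving exactly $q^{k-d}$ solutions.

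I then remove the solutions having some $\alpha_i=0$. For each $i$, the condition $\alpha_i=0$ is not implied by the system. Indeed, the block containing $i$ has size at least $2$, so the block equation admits a solution with $\alpha_i\neq 0$ (set $\alpha_i=1$, another member of the block to $-1$, and everything else to $0$). Hence $\alpha_i=0$ cuts a proper subspace, of size $q^{k-d-1}$. A union bound over $i$ removes at most $kq^{k-d-1}$ solutions. This is exactly where the hypothesis of no singleton blocks is used.

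\textbf{Incompatible case.} Some block $B=\pi^{-1}(j_0)$ contains indices $i_1,i_2$ with $\epsilon_{i_1}=-\epsilon_{i_2}$. I claim the $d$ block equations together with $\sum_i\alpha_i=0$ are $d+1$ linearly independent equations. The block equations are independent because their supports are disjoint. So suppose $\sum_i\alpha_i=\sum_j c_j\sum_{i\in\pi^{-1}(j)}\epsilon_i\alpha_i$. Comparing coefficients of $\alpha_{i_1}$ and $\alpha_{i_2}$ gives $c_{j_0}\epsilon_{i_1}=1=c_{j_0}\epsilon_{i_2}$, which forces $\epsilon_{i_1}=\epsilon_{i_2}$, a contradiction. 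Here I use that $q$ is odd, so that $1\neq -1$. Thus there are at most $q^{k-d-1}$ solutions, even before imposing the nonvanishing conditions.

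Summing over sign vectors gives $\mathcal{S}(\pi)=2^d\bigl(q^{k-d}+\Oh{q^{k-d-1}}\bigr)+(2^k-2^d)\,\Oh{q^{k-d-1}}=2^dq^{k-d}+\Oh{q^{k-d-1}}$, with implied constants depending only on $k$.

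The only genuinely new point compared with the pairing case is the compatible-case check that each hyperplane $\alpha_i=0$ is transversal to the solution space. This is where the absence of singleton blocks enters; with a singleton block the condition $\alpha_i=0$ would be forced and the count would be $0$, consistent with Corollary \ref{cor_partitions_with_block_of_1_do_not_contribute}. I expect no real obstacle beyond that verification.
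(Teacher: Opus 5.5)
Your proposal is correct and follows essentially the same route as the paper. You split the sign vectors into $\pi$-compatible and incompatible ones, count $q^{k-d}$ solutions minus at most $kq^{k-d-1}$ in the compatible case (using that, with no singleton blocks, no hyperplane $\alpha_i=0$ contains the solution space), and show the $d+1$ equations are independent in the incompatible case; your explicit transversality witness and your remark that $q$ odd is needed for $1\neq -1$ in $\F_q$ are small clarifications the paper leaves implicit.
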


\begin{proof}
Call $\epsilon_1,\ldots,\epsilon_k\in\{\pm 1\}^k$ $\pi$-compatible if for all $1\le i,j\le k$, if $\pi(i)=\pi(j)$ then $\epsilon_i=\epsilon_j$. We claim that

\begin{enumerate}
    \item If $\epsilon_1,\ldots,\epsilon_k$ are $\pi$-compatible, the number of solutions $\alpha_1,\ldots,\alpha_{k-1}\in\F_q^{k-1}$ of $S(\pi;\epsilon_1,\ldots,\epsilon_k)$ with $\alpha_1\cdots\alpha_k\neq 0$ is $q^{k-d}+O(q^{k-d-1})$. 
    \item If $\epsilon_1,\ldots,\epsilon_k$ are not $\pi$-compatible, the number of solutions  $\alpha_1,\ldots,\alpha_{k-1}\in\F_q^{k-1}$ of $S(\pi;\epsilon_1,\ldots,\epsilon_k)$ with $\alpha_1\cdots\alpha_k\neq 0$ is $O(q^{k-d-1})$. 
\end{enumerate}
This would finish the claim because there are precisely $2^d$ $\pi$-compatible $\epsilon_1,\ldots\epsilon_k$ (choose a sign $\pm 1$ for each of the $d$ blocks). 

\case{$\epsilon_1,\ldots,\epsilon_k$ are $\pi$-compatible} Multiplying by an appropriate constant, the system of equations $\mathcal{S}(\pi;\epsilon_1,\ldots\epsilon_k)$ is equivalent to the system of equations 
$$
\sum_{j\in\pi^{-1}(i)}\alpha_j=0,i=1,\ldots,d.
$$
To count solutions $\alpha_1,\ldots,\alpha_{k-1}$ with $\alpha_1\cdots\alpha_k\neq 0$, $\alpha_k=-(\alpha_1+\cdots+\alpha_{k-1})$, we view this as a system of $d+1$ linear equations in the variables $\alpha_1,\ldots,\alpha_k$, namely the system
$$
\alpha_1+\cdots+\alpha_k=0,\sum_{j\in\pi^{-1}(i)}\alpha_j=0,i=1,\ldots,d.
$$
Notice that the first equation is clearly linearly dependent on the other $d$ equations, and that the rest of the equations are linearly independent; therefore we get that there are precisely $q^{k-d}$ solutions $\alpha_1,\ldots,\alpha_k\in\F_q^k$. The conditions $\alpha_1\cdots\alpha_k\neq 0$ translates to excluding either of the hyperplanes $\alpha_i=0$. Since there are no singleton blocks, none of these hyperplanes contains our solution set. Therefore it cuts out a set of solutions of size $q^{k-d-1}$ which we need to remove. In particular, the number of solutions is $q^{k-d}+\Oh{q^{k-d-1}}$.

\case{$\epsilon_1,\ldots,\epsilon_k$ are not $\pi$-compatible} In this case, we know that for some index $1\le i_0\le d$, not all of the signs $\{\epsilon_j\}_{\pi(j)=i_0}$ are equal. We want to bound the number of solutions to $\mathcal{S}(\pi;\epsilon_1,\ldots,\epsilon_k)$ without the extra restriction that $\alpha_1\cdots\alpha_k\neq 0$. We think of our system of equations as the system of linear equations in $k$ variables $\alpha_1,\ldots,\alpha_k\in\F_q^k$, 
$$
\alpha_1+\cdots+\alpha_k=0,\left\{\sum_{j\in\pi^{-1}(i)}\epsilon_j\alpha_j=0\right\}_{i=1,\ldots,d}.
$$
We claim that these $d+1$ equations are linearly independent. Clearly the last $d$ equations are linearly independent. So, if they are linearly dependent, we have $c_1,\ldots,c_d\in\F_q$ such that 
$$
\alpha_1+\cdots+\alpha_k=\sum_{i=1}^d c_i\sum_{j\in\pi^{-1}(i)}\epsilon_j\alpha_j.
$$
Comparing coefficients of $j\in\pi^{-1}(i_0)$, we get 
$
c_i\epsilon_j=1
$ for every $j\in\pi^{-1}(i_0)$. This is a contradiction to the assumption that not all of the signs $\{\epsilon_j\}_{j\in\pi^{-1}(i_0)}$ are equal. We get that the number of solutions of $\mathcal{S}(\pi;\epsilon_1,\ldots,\epsilon_k)$ is at most $q^{k-d-1}$.

\end{proof}

\begin{prop}\label{prop_partition_contribution_poisson_regime}
Let $\ab\vdash k$ be a partition of length $d=l(\ab)$ without singleton blocks, and denote $\sigma=\sqrt{\frac{1}{q}X\log X}$. Then, as $X\to\infty$,
$$
\frac{1}{q^k}\sum_{(\alpha_1,\ldots,\alpha_{k-1})\in\F_q^{k-1}\atop{\alpha_1\cdots\alpha_k\neq 0}}\lim_{g\to\infty}\Ewp[C^k_{(\alpha_1,\ldots,\alpha_{k-1})}(X,\ab)]=c(\ab)q^{-d}X^d\log(X)^{k-d}\parenth{1+\Oh{\frac{1}{q}+\frac{1}{\log X}}}.
$$
\end{prop}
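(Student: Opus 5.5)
The plan is to write $C^k_{(\alpha_1,\ldots,\alpha_{k-1})}(X,\ab)$ as the sum over set partitions $\pi$ with $\ab(\pi)=\ab$ of $C^k_{(\alpha_1,\ldots,\alpha_{k-1})}(X,\pi)$, exactly as in the proof of Corollary \ref{cor_partitions_with_block_of_1_do_not_contribute}. There are $c(\ab)$ such set partitions.

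For each such $\pi$, Proposition \ref{prop_sum_c_alphas_pi_in_terms_of_integrals_and_S_pi} gives
$$
\sum_{(\alpha_1,\ldots,\alpha_{k-1})\in\F_q^{k-1}\atop{\alpha_1\cdots\alpha_k\neq 0}}\lim_{g\to\infty}\Ewp[C^k_{(\alpha_1,\ldots,\alpha_{k-1})}(X,\pi)]=\mathcal{S}(\pi)\prod_{j=1}^d I_{a_j}(X).
$$
Because $\ab$ has no singleton blocks, $\pi$ has none either. So Proposition \ref{prop_S_pi_general_partition_with_no_singleton_blocks} applies and yields $\mathcal{S}(\pi)=2^dq^{k-d}\parenth{1+\Oh{1/q}}$.

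Next, Proposition \ref{prop_J_d_principal_bound} gives $I_{a_j}(X)=\frac12 X\log(X)^{a_j-1}\parenth{1+\Oh{1/\log X}}$ for each block. Taking the product over the $d$ blocks and using $\sum_j(a_j-1)=k-d$, this becomes
$$
\prod_{j=1}^d I_{a_j}(X)=2^{-d}X^d\log(X)^{k-d}\parenth{1+\Oh{\tfrac{1}{\log X}}}.
$$
Here $d\le k$ is fixed, so multiplying $d$ factors of the form $(1+\Oh{1/\log X})$ introduces no loss beyond a constant.

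Multiplying the two estimates, the factors $2^d$ and $2^{-d}$ cancel. Dividing by $q^k$ then gives, for each $\pi$,
$$
q^{-d}X^d\log(X)^{k-d}\parenth{1+\Oh{\tfrac1q+\tfrac{1}{\log X}}}.
$$
Summing over the $c(\ab)$ set partitions with $\ab(\pi)=\ab$ gives the stated formula. The implied constants depend only on $k$, since $c(\ab)$ and the number of partitions of $k$ are bounded in terms of $k$.

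Given the earlier results, no step is a real obstacle. The only point needing care is the asymptotics: the error term in Proposition \ref{prop_J_d_principal_bound} must be relative, $\Oh{X\log(X)^{a_j-2}}$ against the main term $\frac12 X\log(X)^{a_j-1}$, which holds because $a_j\ge 2$. The limit in $g$ is interchanged with the finite sum over $\alpha$ and $\pi$ exactly as was already done in Proposition \ref{prop_sum_c_alphas_pi_in_terms_of_integrals_and_S_pi}.
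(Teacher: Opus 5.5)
Your proposal is correct and follows the paper's proof essentially verbatim. You decompose over the $c(\ab)$ set partitions $\pi$ with $\ab(\pi)=\ab$, apply Proposition~\ref{prop_sum_c_alphas_pi_in_terms_of_integrals_and_S_pi}, and insert Propositions~\ref{prop_S_pi_general_partition_with_no_singleton_blocks} and \ref{prop_J_d_principal_bound}, so that the factors $2^{\pm d}$ cancel. One small side remark of yours is off: the relative error $\Oh{1/\log X}$ for $I_{a_j}(X)$ holds even when $a_j=1$, so the no-singleton hypothesis is really used only through the estimate for $\mathcal{S}(\pi)$.
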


\begin{proof}
We have
\begin{multline*}
\frac{1}{q^k}\sum_{(\alpha_1,\ldots,\alpha_{k-1})\in\F_q^{k-1}\atop{\alpha_1\cdots\alpha_k\neq 0}}\lim_{g\to\infty}\Ewp[C^k_{(\alpha_1,\ldots,\alpha_{k-1})}(X,\ab)]=\\
=\sum_{\pi\text{ set partition of }\atop{\{1,\ldots,k\}\atop{\ab(\pi)=\ab}}}\frac{1}{q^k}\sum_{(\alpha_1,\ldots,\alpha_{k-1})\in\F_q^{k-1}\atop{\alpha_1\cdots\alpha_k\neq 0}}\lim_{g\to\infty}\Ewp[C^k_{(\alpha_1,\ldots,\alpha_{k-1})}(X,\pi)].
\end{multline*}

Using Proposition \ref{prop_sum_c_alphas_pi_in_terms_of_integrals_and_S_pi}, for every $\pi$ with $\ab(\pi)=\ab$ we have
$$
\frac{1}{q^k}\sum_{(\alpha_1,\ldots,\alpha_{k-1})\in\F_q^{k-1}\atop{\alpha_1\cdots\alpha_k\neq 0}}\lim_{g\to\infty}\Ewp[C^k_{(\alpha_1,\ldots,\alpha_{k-1})}(X,\pi)]=q^{-k}\mathcal{S}(\pi)\prod_{j=1}^dI_{a_j}(X).
$$
Using propositions \ref{prop_S_pi_general_partition_with_no_singleton_blocks}, \ref{prop_J_d_principal_bound} we get that

$$
q^{-k}\mathcal{S}(\pi)\prod_{j=1}^dI_{a_j}(X)=q^{-d}X^d\log(X)^{k-d}\parenth{1+\Oh{\frac{1}{q}+\frac{1}{\log X}}}.
$$
There are precisely $c(\ab)$ set partitions $\pi$ with $\ab(\pi)=\ab$, therefore we get the result.
\end{proof}

\begin{cor}\label{cor_partition_contribution_poisson_regime}
Let $\ab\vdash k$ be a partition of length $d=l(\ab)$ without singleton blocks, and denote $\sigma=\sqrt{\frac{1}{q}X\log X}$. Suppose that $\lim_{X\to\infty}\frac{X}{q\log X}=\lambda$, $\lambda>0$. Then, 
$$
\lim_{X\to\infty}\frac{1}{\sigma^kq^k}\sum_{(\alpha_1,\ldots,\alpha_{k-1})\in\F_q^{k-1}\atop{\alpha_1\cdots\alpha_k\neq 0}}\lim_{g\to\infty}\Ewp[C^k_{(\alpha_1,\ldots,\alpha_{k-1})}(X,\ab)]=c(\ab)\lambda^{^{d-k/2}}.
$$
\end{cor}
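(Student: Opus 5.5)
This follows directly from Proposition \ref{prop_partition_contribution_poisson_regime}. That proposition gives
$$
\frac{1}{q^k}\sum_{(\alpha_1,\ldots,\alpha_{k-1})\in\F_q^{k-1}\atop{\alpha_1\cdots\alpha_k\neq 0}}\lim_{g\to\infty}\Ewp[C^k_{(\alpha_1,\ldots,\alpha_{k-1})}(X,\ab)]=c(\ab)q^{-d}X^d\log(X)^{k-d}\parenth{1+\Oh{\frac{1}{q}+\frac{1}{\log X}}}.
$$
The plan is to divide both sides by $\sigma^k$ and take the limit as $X\to\infty$.

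With $\sigma^k=q^{-k/2}X^{k/2}\log(X)^{k/2}$, the ratio simplifies exactly as in the proof of Corollary \ref{cor_non_pairing_partitions_have_no_contribution}:
$$
\frac{q^{-d}X^d\log(X)^{k-d}}{\sigma^k}=q^{k/2-d}X^{d-k/2}\log(X)^{k/2-d}=\parenth{\frac{X}{q\log X}}^{d-k/2}.
$$
The normalized sum therefore equals $c(\ab)\parenth{\frac{X}{q\log X}}^{d-k/2}\parenth{1+\Oh{\frac{1}{q}+\frac{1}{\log X}}}$.

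Now use the hypotheses as $X\to\infty$. Since $q\to\infty$, the error factor tends to $1$. Since $\frac{X}{q\log X}\to\lambda>0$, continuity of $t\mapsto t^{d-k/2}$ on $(0,\infty)$ gives convergence of the power to $\lambda^{d-k/2}$. This holds for any sign of the exponent; here the exponent is $\le 0$ because no block is a singleton, so $d\le k/2$. The limit is therefore $c(\ab)\lambda^{d-k/2}$.

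I expect no real obstacle. The only point to check is that the error term in Proposition \ref{prop_partition_contribution_poisson_regime} is uniform, i.e. that its implied constant depends only on $k$ and $\ab$ and not on $q$ or $X$. This holds because it comes from Proposition \ref{prop_S_pi_general_partition_with_no_singleton_blocks} and Proposition \ref{prop_J_d_principal_bound}, whose constants depend only on $k$ and the $a_j$. The $g\to\infty$ limit has already been taken inside the sum.
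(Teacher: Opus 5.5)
Your proposal is correct and is essentially the paper's own proof: you invoke Proposition~\ref{prop_partition_contribution_poisson_regime}, rewrite $\sigma^{-k}q^{-d}X^d\log(X)^{k-d}$ as $\parenth{\frac{X}{q\log X}}^{d-k/2}$, and let $X\to\infty$. Your claim that the error factor tends to $1$ is justified, since $q\to\infty$ follows from the hypothesis $\frac{X}{q\log X}\to\lambda>0$.
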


\begin{proof}
Using Proposition \ref{prop_partition_contribution_poisson_regime}, we have

\begin{multline*}
\lim_{X\to\infty}\frac{1}{\sigma^kq^k}\sum_{(\alpha_1,\ldots,\alpha_{k-1})\in\F_q^{k-1}\atop{\alpha_1\cdots\alpha_k\neq 0}}\lim_{g\to\infty}\Ewp[C^k_{(\alpha_1,\ldots,\alpha_{k-1})}(X,\ab)]=\\
=\lim_{X\to\infty}c(\ab)\sigma^{-k}q^{-d}X^d\log(X)^{k-d}\parenth{1+\Oh{\frac{1}{q}+\frac{1}{\log X}}}.    
\end{multline*}
But
$$
\sigma^{-k}q^{-d}X^d\log(X)^{k-d}=q^{k/2-d}X^{d-k/2}\log(X)^{k/2-d}=\parenth{\frac{X}{q\log X}}^{d-k/2}.
$$
Taking $X\to\infty$, we get the result.
\end{proof}

\begin{proof}[Proof of Theorem \ref{thm_main_Poisson}]
Using Proposition \ref{prop_S_M_mean_as_sum_over_C_alphas} and Corollary \ref{cor_partitions_with_block_of_1_do_not_contribute} we get

\begin{multline*}
\lim_{X\to\infty}\lim_{g\to\infty}\frac{\Ewp[\meanGamma{S_M^k(X,\Gamma)}]}{\sigma^k}=\\
=\sum_{\ab\vdash k\atop{\ab\text{ has no singleton blocks}}}\frac{1}{\sigma^kq^k}\sum_{(\alpha_1,\ldots,\alpha_{k-1})\in\F_q^{k-1}\atop{\alpha_1\cdots\alpha_k\neq 0}}\lim_{g\to\infty}\Ewp[C^k_{(\alpha_1,\ldots,\alpha_{k-1})}(X,\ab)].
\end{multline*}
By Corollary \ref{cor_partition_contribution_poisson_regime} we get that 
$$
\lim_{X\to\infty}\lim_{g\to\infty}\frac{\Ewp[\meanGamma{S_M^k(X,\Gamma)}]}{\sigma^k}=\sum_{\ab\vdash k\atop{\ab\text{ has no singleton blocks}}}c(\ab)\lambda^{l(\ab)-k/2}.
$$
\cite[(3.5)]{Pri11} proves that the right hand side is $\mathbb{E}\left[
\parenth{
\frac{P-\lambda}{\sqrt\lambda}
}^k
\right]$, hence the result.
\end{proof}

\section{The asymptotics of $G_M(X,\Gamma)$ for a typical index $q$ lattice}

\subsection{An analog of the Barban-Davenport-Halberstam theorem}
Write $G_M(X,\Gamma)=qS_M^2(X,\Gamma)$. This quantity is closely related to a conjecture of Hooley (see Section \ref{sec_number_theoretic_discussoin}) and to the work of Rudnick \cite{Rud26b} (where he also considers non-primitive geodesics). Define
$$
H_M(X,q)=\meanGamma{G_M(X,\Gamma)}.
$$
Estimating $H_M(X,q)$ is somewhat analogous to the setting of the Barban-Davenport-Halberstam theorem; see Section \ref{sec_thms_bdh} below. Our results imply the following
\begin{cor}\label{cor_bdh_type_thm}
Let $q=q(X)$ be a sequence of odd primes (which are allowed to be either all constant or tend to $\infty$ with $X$). We have, as $X\to\infty$,
$$
\lim_{g\to\infty}\Ewp[H_M(X,q)]\sim\parenth{1-\frac{1}{q}}X\log X.
$$
\end{cor}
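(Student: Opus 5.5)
We specialize the machinery of Section 3 to $k=2$ and track the exact constant rather than only the leading order in $q$. Since $H_M(X,q)=q\meanGamma{S_M^2(X,\Gamma)}$, Proposition \ref{prop_S_M_mean_as_sum_over_C_alphas} with $k=2$ gives
$$
H_M(X,q)=\frac{1}{q}\sum_{\alpha_1\in\F_q^\times}C^2_{(\alpha_1)}(X)+\Oh{\frac{\Theta(X)^2}{q^{2g-1}}}.
$$
Here the condition $\alpha_1\cdot(\alpha_1)\neq 0$ just says $\alpha_1\neq 0$, and $\alpha_2=-\alpha_1$. The error term vanishes as $g\to\infty$ for fixed $X$, because $\Theta(X)$ is bounded in terms of $X$ on a set of surfaces of probability tending to one. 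More carefully, I would bound $\Ewp[\Theta(X)^2]$ uniformly in $g$ via Mirzakhani's formula and Lemma \ref{lem_non_SNS_are_negligible}.

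Next I decompose $C^2_{(\alpha_1)}(X)$ by collision type. For $k=2$ there are only two set partitions: $\ab=(1,1)$, where $\gamma_1,\gamma_2$ are distinct and not mutual inverses, and $\ab=(2)$, where $\gamma_2=\gamma_1^{\pm1}$. By Corollary \ref{cor_partitions_with_block_of_1_do_not_contribute}, the summed $g\to\infty$ limit of the $(1,1)$ contribution is zero. To be careful about interchanging $\lim_g$ with the finite sum over $\alpha_1$, note that the sum has only $q-1$ terms for each fixed $X$, so the interchange is harmless. For the block $\pi=(2)$, Proposition \ref{prop_sum_c_alphas_pi_in_terms_of_integrals_and_S_pi} gives the contribution $\mathcal{S}(\pi)I_2(X)$.

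I would compute $\mathcal{S}(\pi)$ exactly instead of using Proposition \ref{prop_S_pi_general_partition_with_no_singleton_blocks}, since the factor $1-\frac1q$ is lower order in $q$ and the statement allows $q$ to be fixed. The single equation is $\epsilon_1\alpha_1+\epsilon_2\alpha_2=0$ with $\alpha_2=-\alpha_1$, that is, $(\epsilon_1-\epsilon_2)\alpha_1=0$.
\begin{itemize}
\item For the two compatible sign choices $\epsilon_1=\epsilon_2$, every $\alpha_1\in\F_q^\times$ is a solution, giving $q-1$ solutions each.
\item For the two incompatible choices, we need $2\alpha_1=0$, which forces $\alpha_1=0$ since $q$ is odd, so there are no solutions.
\end{itemize}
Hence $\mathcal{S}(\pi)=2(q-1)$. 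Geometrically this says that the pair $(\gamma,\gamma)$ always lies in the same class, while $(\gamma,\gamma^{-1})$ does so only if $2[\gamma]\in\Gamma$, which fails for SNS $\gamma$ with $[\gamma]\notin\Gamma$. Combining the pieces,
$$
\lim_{g\to\infty}\Ewp[H_M(X,q)]=\frac{2(q-1)}{q}I_2(X)=\parenth{1-\frac1q}\parenth{X\log X+\Oh{X}}
$$
by Proposition \ref{prop_J_d_principal_bound}. This is $\sim(1-\frac1q)X\log X$ uniformly in $q\ge3$, because $1-\frac1q\ge\frac23$.

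The main obstacle is bookkeeping rather than any deep step. First, one must check that Proposition \ref{prop_S_M_mean_as_sum_over_C_alphas} at $k=2$ has the correct normalization, including the factor $q$ relating $G_M$ and $S^2_M$ and the subtraction of $\Theta(X)^2/q^2$ from the $\alpha_1=0$ term. Second, one must justify that the $O(q^{-2g})$ error and the inadmissible tuples vanish in the limit $g\to\infty$ for each fixed $X$. For the latter I would use Lemma \ref{lem_non_SNS_are_negligible} with $f_i=\Lambda\cdot 1_{l\le\log X}$, which is bounded and compactly supported.
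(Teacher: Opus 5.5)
Your proposal is correct and follows the paper's proof essentially step for step. You apply Proposition~\ref{prop_S_M_mean_as_sum_over_C_alphas} at $k=2$, discard the $(1,1)$ collision type, and evaluate the $(2)$ type via Proposition~\ref{prop_sum_c_alphas_pi_in_terms_of_integrals_and_S_pi}, arriving at $2(1-\frac1q)I_2(X)$ just as the paper does. The differences are cosmetic: the paper first uses $C^2_{(\alpha)}(X)=C^2_{(1)}(X)$ for all $\alpha\in\F_q^\times$ and counts sign patterns for a single $\alpha$ (its ``$\mathcal{S}((2))=2$''), whereas you compute $\mathcal{S}(\pi)=2(q-1)$ directly from the definition, which is the internally consistent reading; you also justify dropping the $O(q^{-2g})$ error term, which the paper leaves implicit.
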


\begin{proof}
From Proposition \ref{prop_S_M_mean_as_sum_over_C_alphas}, noticing that $C^2_{\alpha}(X)=C_\beta^2(X)$ for any $\alpha,\beta\in\F_q^\times$, it follows that
$$
\lim_{g\to\infty}\Ewp[H_M(X,q)]=\parenth{1-\frac{1}{q}}\lim_{g\to\infty}\Ewp[C_{(1)}^2(X)]. 
$$
By Proposition \ref{prop_sum_c_alphas_pi_in_terms_of_integrals_and_S_pi} and Corollary \ref{cor_partitions_with_block_of_1_do_not_contribute}, since $\mathcal{S}((2))=2$, we get that 
\begin{equation}\label{eqn_H_M_asymptotics}
\lim_{g\to\infty}\Ewp[H_M(X,q)]=2\parenth{1-\frac{1}{q}}I_2(X).    
\end{equation}
The result now follows from Proposition \ref{prop_J_d_principal_bound}.
\end{proof}

\subsection{The variance of $G_M(X,\Gamma)$ over index $q$ lattices}
We wish to understand also the asymptotic behavior of $\lim_{g\to\infty}\Ewp[G_M(X,\Gamma)]$ for a typical lattice $\Gamma$ of index $q$. For this, we want to compute the variance of $G_M(X,\Gamma)$ over index $q$ lattices, namely
$$
\lim_{g\to\infty}\left[
\meanGamma{G_M(X,\Gamma)^2}-\meanGamma{G_M(X,\Gamma)}^2
\right].
$$
When analyzing $G_M(X,q\Z^{2g})$ one immediately encounters the quantity 
$$
J_M(X,q)=\sum_{N(\gamma_1),N(\gamma_2)\le X\atop{[\gamma_1]=[\gamma_2]\pmod{q}}}\Lambda(\gamma_1)\Lambda(\gamma_2),
$$
defined in Rudnick \cite{Rud26b} (though again, in his sum he considers also non-primitive geodesics). We remark that in our notation, this is $C_{(1)}^2(X)$.

\begin{prop}\label{prop_mean_GM_as_J_M}
 We have
    $$
    \lim_{g\to\infty}\Ewp\left[
    \meanGamma{G_M(X,\Gamma)}^2
    \right]=\parenth{1-\frac{1}{q}}^2\lim_{g\to\infty}\Ewp[J_M(X,q)^2].
    $$
\end{prop}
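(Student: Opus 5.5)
We aim for an exact, deterministic identity for $\meanGamma{G_M(X,\Gamma)}$ (up to an error term), and then square it and take expectations. By Proposition \ref{prop_S_M_mean_as_sum_over_C_alphas} with $k=2$, the sum is over $\alpha_1\in\F_q$ with $\alpha_1\cdot\alpha_1\neq 0$, i.e. over $\alpha_1\in\F_q^\times$. Multiplying by $q$ gives
$$
\meanGamma{G_M(X,\Gamma)}=\frac{1}{q}\sum_{\alpha\in\F_q^\times}C^2_{(\alpha)}(X)+\Oh{\frac{\Theta(X)^2}{q^{2g-1}}}.
$$
For $\alpha\neq 0$ the condition $\alpha u_1=0$ in $\F_q^{2g}$ is equivalent to $u_1=0$, that is, $[\gamma_1]=[\gamma_2]\pmod q$. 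Hence $C^2_{(\alpha)}(X)=C^2_{(1)}(X)=J_M(X,q)$ for every $\alpha\in\F_q^\times$, and
$$
\meanGamma{G_M(X,\Gamma)}=\parenth{1-\frac1q}J_M(X,q)+\Oh{\frac{\Theta_M(X)^2}{q^{2g-1}}}.
$$
The main point here is that this holds pointwise in $M$, not only in expectation.

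Next we square. Writing $E$ for the error term, we get
$$
\meanGamma{G_M}^2=\parenth{1-\frac1q}^2J_M^2+2\parenth{1-\frac1q}J_M E+E^2.
$$
We then apply $\Ewp$ and take $g\to\infty$, and need to show that the last two terms vanish in the limit. Since $J_M(X,q)\le\Theta_M(X)^2$, both cross terms are bounded by $\Oh{\Theta_M(X)^4q^{-(2g-1)}}$. The implied constant is absolute by Proposition \ref{prop_k_vectors_on_lattice}.

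The main obstacle is bounding $\Ewp[\Theta_M(X)^4]$ uniformly in $g$ for fixed $X$, because $\Theta_M(X)$ is not uniformly bounded on $\mathcal{M}_g$ (short geodesics near the thin part). The plan is to expand $\Theta_M(X)^4$ as a sum over $4$-tuples of primitive geodesics of length at most $\log X$, weighted by products of lengths bounded by $(\log X)^4$. The expectation of the number of such tuples is then controlled by the Mirzakhani--Petri counting, as in the proof of Lemma \ref{lem_non_SNS_are_negligible}: SNS tuples are handled by Proposition \ref{prop_collision_integrals}, and non-SNS tuples contribute $O_X(1/g)$. This gives $\Ewp[\Theta_M(X)^4]=O_X(1)$. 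Since $q^{-(2g-1)}\to 0$ as $g\to\infty$, the error terms vanish in the limit, and the stated identity follows. This also implicitly assumes that the limit $\lim_{g\to\infty}\Ewp[J_M(X,q)^2]$ exists; that is obtained by the same collision-type analysis as before, or alternatively the identity can be read as an equality of limits whenever one side exists.
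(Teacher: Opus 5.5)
Your argument is correct and is essentially the paper's proof: Proposition \ref{prop_S_M_mean_as_sum_over_C_alphas} with $k=2$, together with $C^2_{(\alpha)}(X)=J_M(X,q)$ for every $\alpha\in\F_q^\times$, gives $\meanGamma{G_M(X,\Gamma)}=\parenth{1-\frac1q}J_M(X,q)+\Oh{\Theta_M(X)^2q^{-(2g-1)}}$ pointwise in $M$; one then squares, applies $\Ewp$, and lets $g\to\infty$. Your extra step showing that $\Ewp[\Theta_M(X)^4]$ stays bounded as $g\to\infty$ (via the Mirzakhani--Petri tuple counting) supplies a justification the paper leaves implicit when it discards the $\Oh{\Theta(X)^4/q^{2g}}$ term.
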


\begin{proof}
By Proposition \ref{prop_S_M_mean_as_sum_over_C_alphas}, since $C_{\alpha}^2(X)=C_{\beta}^2(X)=J_M(X,q)$ for every $\alpha,\beta\in\F_q^\times$, 
$$
\meanGamma{G_M(X,\Gamma)}^2=\parenth{1-\frac{1}{q}}^2J_M(X,q)^2+\Oh{\frac{\Theta(X)^{4}}{q^{2g}}}.
$$
Taking $\Ewp$ and letting $g\to\infty$ we get the result.
\end{proof}

\begin{prop}\label{prop_variance_gamma_first_form}
We have
\begin{multline*}
\lim_{g\to\infty}\Ewp\left[
\meanGamma{G_M(X,\Gamma)^2}-\meanGamma{G_M(X,\Gamma)}^2
\right]=\\
=\parenth{\frac{1}{q}-\frac{1}{q^2}}\lim_{g\to\infty}\Ewp\left[
\sum_{l(\gamma_1),\ldots,l(\gamma_4)\le \log X\atop{\dim\mathrm{span}_{\F_q}([\gamma_1]-[\gamma_2],[\gamma_3]-[\gamma_4])=1\atop{[\gamma_1]-[\gamma_2],[\gamma_3]-[\gamma_4]\neq 0\pmod{q}}}}\Lambda(\gamma_1)\Lambda(\gamma_2)\Lambda(\gamma_3)\Lambda(\gamma_4)
\right].
\end{multline*}
\end{prop}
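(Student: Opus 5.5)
The plan is to expand both $\meanGamma{G_M(X,\Gamma)^2}$ and $\meanGamma{G_M(X,\Gamma)}^2$ as sums over $4$-tuples of geodesics. The variance then becomes a sum of covariances of indicator functions over the uniformly random hyperplane defined by $\Gamma$, and Proposition \ref{prop_k_vectors_on_lattice} evaluates these covariances.

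\textbf{Step 1: rewriting $G_M$.} Since $\Theta(X;\Gamma,\alpha)$ summed over $\alpha$ gives $\Theta(X)$, expanding the square gives
$$
G_M(X,\Gamma)=\sum_{\alpha}\Theta(X;\Gamma,\alpha)^2-\frac{\Theta(X)^2}{q}=\sum_{l(\gamma_1),l(\gamma_2)\le\log X}\Lambda(\gamma_1)\Lambda(\gamma_2)\parenth{1_{v_{12}\in H}-\frac1q}.
$$
Here $v_{12}=[\gamma_1]-[\gamma_2]\pmod q\in\F_q^{2g}$, and $H\subset\F_q^{2g}$ is the hyperplane corresponding to $\Gamma$. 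Squaring, and squaring the average, gives
$$
\meanGamma{G_M^2}-\meanGamma{G_M}^2=\sum_{\gamma_1,\ldots,\gamma_4}\Lambda(\gamma_1)\cdots\Lambda(\gamma_4)\Big(\meanGamma{1_{v\in H}1_{w\in H}}-\meanGamma{1_{v\in H}}\meanGamma{1_{w\in H}}\Big),
$$
with $v=v_{12}$ and $w=v_{34}$. The constants $-1/q$ cancel exactly in a covariance, so they drop out.

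\textbf{Step 2: computing the covariance by cases.} Proposition \ref{prop_k_vectors_on_lattice} (applied with $k=2,3$, or directly by counting hyperplanes containing a given subspace) gives $\meanGamma{1_{v\in H}}=q^{-\dim\mathrm{span}(v)}+O(q^{-2g})$ and $\meanGamma{1_{v\in H}1_{w\in H}}=q^{-\dim\mathrm{span}(v,w)}+O(q^{-2g})$. There are three cases.
\begin{enumerate}
\item If $v=0$ or $w=0$, one indicator is identically $1$, so the covariance is $0$ up to $O(q^{-2g})$.
\item If $v,w\ne0$ and $\dim\mathrm{span}(v,w)=2$, the covariance is $q^{-2}-q^{-1}q^{-1}=0$ up to $O(q^{-2g})$.
\item If $v,w\ne0$ and $\dim\mathrm{span}(v,w)=1$, the covariance is $q^{-1}-q^{-2}$ up to $O(q^{-2g})$.
\end{enumerate}
Hence the variance equals $\parenth{\frac1q-\frac1{q^2}}$ times the restricted $4$-tuple sum in the statement, plus an error $O\parenth{q^{-2g}\Theta(X)^4}$. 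The implicit constant is absolute by Proposition \ref{prop_k_vectors_on_lattice}.

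\textbf{Step 3: the limit $g\to\infty$ (main obstacle).} We take $\Ewp$ and let $g\to\infty$, so the remaining task is to show $q^{-2g}\Ewp[\Theta(X)^4]\to0$. I expect this to be the only nontrivial point, and it is the same issue already handled implicitly in Proposition \ref{prop_mean_GM_as_J_M}. It suffices to know that $\Ewp[\Theta(X)^4]$ is bounded uniformly in $g$ for fixed $X$.
\begin{itemize}
\item Expand $\Theta(X)^4$ as a sum over $4$-tuples with weights at most $(\log X)^4$.
\item Split the tuples into inadmissible ones and SNS ones, grouped by collision type.
\item The inadmissible tuples contribute $o(1)$ by Lemma \ref{lem_non_SNS_are_negligible} (equivalently, the count $N_4'$ of Mirzakhani--Petri is $O_X(1/g)$).
\item The SNS tuples are bounded by finitely many products of the integrals $I_d(X)$ by Proposition \ref{prop_collision_integrals}.
\end{itemize}
This gives $\Ewp[\Theta(X)^4]=O_X(1)$. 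Since $q\ge3$, we get $q^{-2g}O_X(1)\to0$, and the claimed identity follows. The existence of the limit on the right-hand side comes from the same collision-type expansion.
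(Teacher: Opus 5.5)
Your proof is correct, and it reaches the identity by different bookkeeping than the paper.

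\textbf{The paper's route.} The paper squares $G_M(X,\Gamma)=\sum\Lambda(\gamma_1)\Lambda(\gamma_2)1_{[\gamma_1]=[\gamma_2]\pmod{\Gamma}}-\Theta(X)^2/q$ directly. It writes $\meanGamma{G_M(X,\Gamma)^2}$ as a combination of $J_M(X,q)^2$, the sum over tuples with $\dim\mathrm{span}=1$, and cross terms in $\Theta(X)^2J_M(X,q)$. Inclusion--exclusion then strips off the tuples where exactly one of the two differences vanishes; these contribute $2J_M(\Theta^2-J_M)$. What remains besides the restricted sum is exactly $(1-1/q)^2J_M(X,q)^2$, and this cancels against $\meanGamma{G_M(X,\Gamma)}^2$ via Proposition~\ref{prop_mean_GM_as_J_M}.

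\textbf{Your route.} The termwise covariance does all of this at once. The centering constant $-1/q$ cancels in each term. The covariance vanishes (up to $\Oh{q^{-2g}}$) when $v=0$, $w=0$, or $\dim\mathrm{span}(v,w)=2$, and this replaces the inclusion--exclusion step. It also shows why only nonzero collinear pairs survive: they are the only pairs whose events are correlated under a random hyperplane.

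\textbf{Comparison.} The paper's route records $\meanGamma{G_M^2}$ explicitly in terms of $J_M$, which fits how $J_M$ is used later (Proposition~\ref{prop_expected_val_J_M}, Corollary~\ref{cor_Var_g_G_M_X}). That is not needed for this statement, so your version is shorter and more transparent. Your Step 3 also makes explicit something the paper leaves implicit, both here and in Proposition~\ref{prop_mean_GM_as_J_M}: that $q^{-2g}\Ewp[\Theta(X)^4]\to 0$. One small precision there: tuples with collisions reduce to distinct $d$-tuples with $d\le 4$. So the Mirzakhani--Petri bound is needed for the counts of non-SNS $d$-tuples for each $d=1,\ldots,4$, not only $N_4'$. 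Your grouping by collision type already accommodates this.
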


\begin{proof}
By Proposition \ref{prop_recursion_for_S_M_k}, we have
$$
G_M(X,\Gamma)=\sum_{l(\gamma_1),l(\gamma_2)\le \log X}\Lambda(\gamma_1)\Lambda(\gamma_2)1_{[\gamma_1]=[\gamma_2]\pmod{\Gamma}}-\frac{\Theta(X)^2}{q}.
$$
Squaring this we get 
\begin{multline*}
G_M(X,\Gamma)^2=\sum_{l(\gamma_1),\ldots,l(\gamma_4)\le \log X}\Lambda(\gamma_1)\Lambda(\gamma_2)\Lambda(\gamma_3)\Lambda(\gamma_4)1_{[\gamma_1]=[\gamma_2]\pmod{\Gamma}}1_{[\gamma_3]=[\gamma_4]\pmod{\Gamma}}-\\
-2\frac{\Theta(X)^2}{q}\sum_{l(\gamma_1),l(\gamma_2)\le \log X}\Lambda(\gamma_1)\Lambda(\gamma_2)1_{[\gamma_1]=[\gamma_2]\pmod{\Gamma}}+\frac{\Theta(X)^4}{q^2}.
\end{multline*}
Taking the average over all $\Gamma\in L_{g,q}$, by a similar proof to that of Proposition \ref{prop_k_vectors_on_lattice}, we see that
$$
\meanGamma{1_{[\gamma_1]=[\gamma_2]\pmod{\Gamma}}1_{[\gamma_3]=[\gamma_4]\pmod{\Gamma}}}=\begin{cases}
    1, & [\gamma_1]-[\gamma_2]=[\gamma_3]-[\gamma_4]=0\pmod{q}, \\
    q^{-1}+O(q^{-2g}), & \dim\mathrm{span}_{\F_q}([\gamma_1]-[\gamma_2],[\gamma_3]-[\gamma_4])=1,\\
    q^{-2}+\Oh{q^{-2g}}, & \text{otherwise.}
\end{cases}
$$
We get that

\begin{multline*}
\meanGamma{G_M(X,\Gamma)^2}=\sum_{l(\gamma_1),\ldots,l(\gamma_4)\le \log X\atop{[\gamma_1]=[\gamma_2]\pmod{q}\atop{[\gamma_3]=[\gamma_4]\pmod{q}}}}\Lambda(\gamma_1)\Lambda(\gamma_2)\Lambda(\gamma_3)\Lambda(\gamma_4)+\\
+\frac{1}{q}\sum_{l(\gamma_1),\ldots,l(\gamma_4)\le \log X\atop{\dim\mathrm{span}_{\F_q}([\gamma_1]-[\gamma_2],[\gamma_3]-[\gamma_4])=1}}\Lambda(\gamma_1)\Lambda(\gamma_2)\Lambda(\gamma_3)\Lambda(\gamma_4)
+\\
+\frac{1}{q^2}\sum_{l(\gamma_1),\ldots,l(\gamma_4)\le \log X\atop{\dim\mathrm{span}_{\F_q}([\gamma_1]-[\gamma_2],[\gamma_3]-[\gamma_4])=2}}\Lambda(\gamma_1)\Lambda(\gamma_2)\Lambda(\gamma_3)\Lambda(\gamma_4)
-\\
-2\frac{\Theta(X)^2}{q}\sum_{l(\gamma_1),l(\gamma_2)\le \log X\atop{[\gamma_1]=[\gamma_2]\pmod{q}}}
-2\frac{\Theta(X)^2}{q^2}\sum_{l(\gamma_1),l(\gamma_2)\le \log X\atop{[\gamma_1]\neq[\gamma_2]\pmod{q}}}+\frac{\Theta(X)^4}{q^2}+
\Oh{\frac{\Theta(X)^4}{q^{2g}}}.
\end{multline*}
Rearranging we get 
\begin{multline}\label{eqn_midterm_calc_G_M_X_sqrd}
\meanGamma{G_M(X,\Gamma)^2}=\parenth{1-\frac{1}{q^2}}J_M(X,q)^2+\\
+\parenth{\frac{1}{q}-\frac{1}{q^2}}\sum_{l(\gamma_1),\ldots,l(\gamma_4)\le \log X\atop{\dim\mathrm{span}_{\F_q}([\gamma_1]-[\gamma_2],[\gamma_3]-[\gamma_4])=1}}\Lambda(\gamma_1)\Lambda(\gamma_2)\Lambda(\gamma_3)\Lambda(\gamma_4)-\\
-2\parenth{1-\frac{1}{q}}\frac{\Theta(X)^2}{q}J_M(X,q)+\Oh{\frac{\Theta(X)^4}{q^{2g}}}.
\end{multline}
Notice that by inclusion-exclusion,
\begin{multline*}
\sum_{l(\gamma_1),\ldots,l(\gamma_4)\le \log X\atop{\dim\mathrm{span}_{\F_q}([\gamma_1]-[\gamma_2],[\gamma_3]-[\gamma_4])=1}}\Lambda(\gamma_1)\Lambda(\gamma_2)\Lambda(\gamma_3)\Lambda(\gamma_4)=\\
=\sum_{l(\gamma_1),\ldots,l(\gamma_4)\le \log X\atop{\dim\mathrm{span}_{\F_q}([\gamma_1]-[\gamma_2],[\gamma_3]-[\gamma_4])=1\atop{[\gamma_1]-[\gamma_2],[\gamma_3]-[\gamma_4]\neq 0\pmod{q}}}}\Lambda(\gamma_1)\Lambda(\gamma_2)\Lambda(\gamma_3)\Lambda(\gamma_4)+2J_M(X,q)(\Theta(X)^2-J_M(X,q)). 
\end{multline*}
Plugging this back into \eqref{eqn_midterm_calc_G_M_X_sqrd} we get that
\begin{multline*}
\meanGamma{G_M(X,\Gamma)^2}=\parenth{1-\frac{1}{q}}^2J_M(X,q)^2+\\
+\parenth{\frac{1}{q}-\frac{1}{q^2}}\sum_{l(\gamma_1),\ldots,l(\gamma_4)\le \log X\atop{\dim\mathrm{span}_{\F_q}([\gamma_1]-[\gamma_2],[\gamma_3]-[\gamma_4])=1\atop{[\gamma_1]-[\gamma_2],[\gamma_3]-[\gamma_4]\neq 0\pmod{q}}}}\Lambda(\gamma_1)\Lambda(\gamma_2)\Lambda(\gamma_3)\Lambda(\gamma_4)+\Oh{\frac{\Theta(X)^4}{q^{2g}}}.
\end{multline*}
Now using Proposition \ref{prop_mean_GM_as_J_M} we finish.
\end{proof}

\begin{definition}
Let $X\ge 0$, $q$ prime. Define for $\alpha\in \F_q^\times$
$$
S_{\alpha}(X,q)=\sum_{N(\gamma_1),\ldots,N(\gamma_4)\le X\atop {[\gamma_1]-[\gamma_2]=\alpha([\gamma_3]-[\gamma_4])\pmod {q}\atop{[\gamma_1]-[\gamma_2]\neq 0\pmod{q}}}}\Lambda(\gamma_1)\Lambda(\gamma_2)\Lambda(\gamma_3)\Lambda(\gamma_4).
$$
\end{definition}

\begin{cor}\label{cor_variance_over_gamma_explicit}
We have 
$$
\lim_{g\to\infty}\Ewp[\meanGamma{G_M(X,\Gamma)^2}-\meanGamma{G_M(X,\Gamma)}^2]=\parenth{\frac{1}{q}-\frac{1}{q^2}}\sum_{\alpha\in\F_q^\times}\lim_{g\to\infty}\Ewp[S_\alpha(X,q)].
$$
\end{cor}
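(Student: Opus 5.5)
This will follow directly from Proposition \ref{prop_variance_gamma_first_form}: it suffices to show that, for every $M$, the four-fold sum on its right-hand side equals $\sum_{\alpha\in\F_q^\times}S_\alpha(X,q)$ identically. The corollary then follows by taking $\Ewp$ and $g\to\infty$ term by term. This is legitimate because the sum over $\alpha$ is finite ($q$ is fixed while $g\to\infty$), and each $\lim_{g\to\infty}\Ewp[S_\alpha(X,q)]$ exists. Indeed, each $S_\alpha$ is a sum of products of compactly supported bounded functions of the lengths. So Lemma \ref{lem_non_SNS_are_negligible} together with Mirzakhani's formula \eqref{eqn_mirzakhanis_integration_with_vol_estimate} gives a finite limit, as in Proposition \ref{prop_sum_c_alphas_pi_in_terms_of_integrals_and_S_pi}.

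For the identity, fix a quadruple $(\gamma_1,\ldots,\gamma_4)$ and set $v=[\gamma_1]-[\gamma_2]$ and $w=[\gamma_3]-[\gamma_4]$ in $\F_q^{2g}$. Suppose the quadruple is counted on the left, i.e. $v,w\neq 0$ and $\dim\mathrm{span}_{\F_q}(v,w)=1$. Since $v\neq0$, the span is $\F_q v$, so $w=cv$ for a unique $c\in\F_q$, and $c\neq 0$ because $w\neq 0$. Hence $v=\alpha w$ with $\alpha=c^{-1}\in\F_q^\times$. This $\alpha$ is unique, because if $\alpha w=\alpha' w$ with $w\neq 0$ then $\alpha=\alpha'$. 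Conversely, suppose $v=\alpha w$ with $\alpha\in\F_q^\times$ and $v\neq 0$. Then $w=\alpha^{-1}v\neq 0$, and the span is one-dimensional. Thus the indicator of the left-hand condition equals $\sum_{\alpha\in\F_q^\times}1_{v=\alpha w,\,v\neq 0}$. Multiplying by $\Lambda(\gamma_1)\cdots\Lambda(\gamma_4)$ and summing over all quadruples with $N(\gamma_i)\le X$ (equivalently $l(\gamma_i)\le\log X$) gives the claimed identity.

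There is no real obstacle here; the only point needing care is the uniqueness of $\alpha$, which prevents double counting. The condition $w\neq 0$ in Proposition \ref{prop_variance_gamma_first_form} is automatically encoded in $S_\alpha$ through $v\neq0$ and $\alpha\neq 0$.
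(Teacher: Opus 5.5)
Your proposal is correct and follows the same route as the paper, whose proof just cites Proposition \ref{prop_variance_gamma_first_form} together with the definition of $S_\alpha(X,q)$. You supply the details the paper leaves implicit: each quadruple with $\dim\mathrm{span}_{\F_q}(v,w)=1$ and $v,w\neq 0$ corresponds to a unique $\alpha\in\F_q^\times$ with $v=\alpha w$, and the limit may be moved inside the finite sum over $\alpha$.
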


\begin{proof}
This follows directly from Proposition \ref{prop_variance_gamma_first_form} and the definition of $S_\alpha(X,q)$.
\end{proof}

\subsection{Estimating the number of solutions to $[\gamma_1]-[\gamma_2]=\alpha([\gamma_3]-[\gamma_4)]$} For the computation of the asymptotic behavior of $\lim_{g\to\infty}\Ewp[\meanGamma{G_M(X,\Gamma)^2}$ we will need to understand the expected value of the sums $S_\alpha(X,q)$.

\begin{prop}\label{prop_S_alpha_estimate}
We have, as $X\to\infty$,
$$\lim_{g\to\infty}\Ewp \left[
\sum_{\alpha\in\F_q^\times}S_\alpha(X,q)
\right]=4X^2\log(X)^2+\Oh{X^2\log X}.
$$
\end{prop}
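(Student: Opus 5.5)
The plan is to handle $\sum_{\alpha}S_\alpha(X,q)$ with the collision-type machinery of Section 3, applied now to the single linear relation $[\gamma_1]-[\gamma_2]-\alpha[\gamma_3]+\alpha[\gamma_4]=0\pmod q$ in place of the systems $\mathcal{S}(\pi;\epsilon_1,\ldots,\epsilon_4)$.

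\textbf{Reduction to SNS tuples.} First I would write
$$
\sum_{\alpha\in\F_q^\times}S_\alpha(X,q)=\sum_{l(\gamma_i)\le\log X}\Lambda(\gamma_1)\cdots\Lambda(\gamma_4)\,\#\{\alpha\in\F_q^\times:\ [\gamma_1]-[\gamma_2]=\alpha([\gamma_3]-[\gamma_4])\ne 0\pmod q\}.
$$
For a fixed tuple this count is at most $1$: if $[\gamma_1]-[\gamma_2]\ne 0$, then at most one $\alpha$ can satisfy the relation. So the weight is bounded, and by Lemma \ref{lem_non_SNS_are_negligible} the non-SNS tuples contribute nothing in the limit $g\to\infty$.

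Next I would split the remaining sum according to the collision type $\pi$, writing $\gamma_i=\eta_{\pi(i)}^{\epsilon_i}$ with $\eta_1,\ldots,\eta_d$ distinct, simple, non-separating and jointly non-separating. Their classes extend to a symplectic basis of $H_1(M,\Z)$, so they remain linearly independent in $\F_q^{2g}$. The homology relation therefore becomes a coefficient condition block by block: for each block $j$,
$$
\sum_{i\in\pi^{-1}(j)}c_i\epsilon_i=0,\qquad (c_1,c_2,c_3,c_4)=(1,-1,-\alpha,\alpha).
$$

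\textbf{Case analysis.} Every $c_i$ is nonzero, so a singleton block forces a contradiction. This leaves the types $(2,2)$ and $(4)$.

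\emph{Pairing $\{1,2\},\{3,4\}$.} The condition gives $\epsilon_1=\epsilon_2$, so $[\gamma_1]-[\gamma_2]=0$, which the definition of $S_\alpha$ excludes. This pairing contributes nothing.

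\emph{Pairing $\{1,3\},\{2,4\}$.} The conditions are $\epsilon_1=\alpha\epsilon_3$ and $\epsilon_2=\alpha\epsilon_4$. This forces $\alpha=\epsilon_1\epsilon_3=\epsilon_2\epsilon_4\in\{\pm1\}$, and these values are distinct elements of $\F_q^\times$ because $q$ is odd. Also $[\gamma_1]-[\gamma_2]=\epsilon_1[\eta_1]-\epsilon_2[\eta_2]\neq 0$ automatically. Exactly $8$ of the $16$ sign vectors satisfy $\epsilon_1\epsilon_3=\epsilon_2\epsilon_4$, and each yields exactly one $\alpha$.

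\emph{Pairing $\{1,4\},\{2,3\}$.} By symmetry this also gives $8$ admissible sign vectors, each with exactly one $\alpha$.

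\emph{Evaluating the pairings.} By Proposition \ref{prop_collision_integrals}, summing over unoriented pairs $(\eta_1,\eta_2)$, each pairing contributes $8\,I_2(X)^2$. By Proposition \ref{prop_J_d_principal_bound}, $I_2(X)^2=\tfrac14X^2\log(X)^2+\Oh{X^2\log X}$. The total from the two pairings is therefore $16\cdot\tfrac14X^2\log(X)^2+\Oh{X^2\log X}=4X^2\log(X)^2+\Oh{X^2\log X}$.

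\emph{Type $(4)$.} For each of the $16$ sign choices there is at most one admissible $\alpha$, as noted above. The contribution is therefore at most $16\,I_4(X)\ll X\log(X)^3=\Oh{X^2\log X}$.

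\textbf{Main obstacle.} The delicate step is justifying the exchange of $\lim_{g\to\infty}$ with the sum over $\alpha$ and the decomposition into collision types. This works because the weight per tuple is bounded (at most one $\alpha$) and because the lengths are restricted to $[0,\log X]$, which gives the compact support needed in Lemma \ref{lem_non_SNS_are_negligible}. A further care point is confirming that linear independence over $\Z$ of SNS classes persists mod $q$; the symplectic-basis extension gives this. Beyond that the argument is bookkeeping of the signs.
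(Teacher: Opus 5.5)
Your proof is correct and follows the paper's argument. You reduce to SNS tuples via Lemma \ref{lem_non_SNS_are_negligible}, kill the singleton-block types and the non-cross pairing $\{1,2\},\{3,4\}$, take the main term from the two cross pairings, and bound type $(4)$ by $I_4(X)$. The only difference is bookkeeping: you count $8$ admissible sign vectors per cross pairing, each fixing a unique $\alpha$, while the paper counts $4$ sign vectors for each $\alpha=\pm1$; both give $16\,I_2(X)^2=4X^2\log(X)^2+\Oh{X^2\log X}$.
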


\begin{proof}
Write 
$$
S_\alpha(X,q)=\sum_{N(\gamma_1),\ldots,N(\gamma_4)\le X\atop {[\gamma_1]-[\gamma_2]=\alpha([\gamma_3]-[\gamma_4])\pmod {q}\atop{[\gamma_1]-[\gamma_2]\neq 0\pmod{q}}}}\Lambda(\gamma_1)\Lambda(\gamma_2)\Lambda(\gamma_3)\Lambda(\gamma_4).
$$
By Lemma \ref{lem_non_SNS_are_negligible}, after taking $\Ewp$ and letting $g\to\infty$, we remain only with the contribution of SNS tuples. We consider their contribution based on the different possible collision types.

\case{Collision of type $\ab$, $\ab$ has a part of size $1$} Let $\ab\vdash 4$ be a partition with at least one part equal to $1$. Suppose that the tuple $\gamma_1,\ldots,\gamma_4$ has a collision of type $\ab$. Without loss of generality, assume that $\gamma_1$ does not collide with any other geodesic. Write
$$
\gamma_i=\eta_i^{\epsilon_i},\epsilon_i=\pm 1, \eta_1\neq \eta_j\text{ for }j\neq 1.
$$
For every given $\alpha\in\F_q^\times$, we have the congruence 
$$
\epsilon_1[\eta_1]=\epsilon_2[\eta_2]+\alpha\epsilon_3[\eta_3]-\alpha\epsilon_4[\eta_4]\pmod{q}.
$$
Here, $\eta_1$ is different from $\eta_2,\eta_3,\eta_4$. The latter might collide, but the underlying tuple of distinct primitive geodesics must be SNS. Therefore, we must have $\epsilon_1=0\pmod{q}$, which is a contradiction. Hence we get no contribution at all from this collision type when we take $\Ewp$ and let $g\to\infty$.

\case{Collision of type $(2,2)$} Here we have two different options: cross collision, meaning $\gamma_1$ collides with either $\gamma_3$ or $\gamma_4$, or no cross collision, when $\gamma_1$ collides with $\gamma_2$. We begin by showing that the case of no cross collision gives no contribution. And indeed, this means that 
$$
\gamma_1=\eta_1^{\epsilon_1},\gamma_2=\eta_1^{\epsilon_2},\gamma_3=\eta_3^{\epsilon_3},\gamma_4=\eta_3^{\epsilon_4},\epsilon_i=\pm 1,
$$
with $\eta_1,\eta_3$ a SNS pair. Then, for every fixed $\alpha$, the homology condition implies 
$$
(\epsilon_1-\epsilon_2)[\eta_1]=\alpha(\epsilon_3-\epsilon_4)[\eta_3]\pmod{q},
$$
which in turn implies $\epsilon_1=\epsilon_2$, so that $[\gamma_1]=[\gamma_2]\pmod{q}$ which we assume does not happen. 

Therefore, we remain with the cross collision case. Without loss of generality we assume $\gamma_1$ collides with $\gamma_3$ (the other case gives the same contribution). Then, we have
$$
\gamma_1=\eta_1^{\epsilon_1},\gamma_2=\eta_2^{\epsilon_2},\gamma_3=\eta_1^{\epsilon_3},\gamma_4=\eta_2^{\epsilon_4},\epsilon_i=\pm 1.
$$
For every given $\alpha$, the homology condition translates to 
$$
\epsilon_1=\alpha\epsilon_3\pmod{q},\epsilon_2=\alpha\epsilon_4\pmod{q}.
$$
When $\alpha\neq\pm 1$, we have no solution with $\epsilon_i=\pm 1$, so the case of cross collision also gives no contribution. When $\alpha=\pm 1$, there are precisely $4$ possible solutions. Then, we get that the contribution of the cross collision case is 
$
4\sum_{l(\eta_1),l(\eta_2)\le \log X}l(\eta_1)^2l(\eta_2)^2
$. Using propositions \ref{prop_collision_integrals} and \ref{prop_J_d_principal_bound}, we get that
$$
4\lim_{g\to\infty}\Ewp\left[
\sum_{l(\eta_1),l(\eta_2)\le \log X}l(\eta_1)^2l(\eta_2)^2
\right]=4I_2(X)^2=X^2\log(X)^2+\Oh{X^2\log X}.
$$
Since there are $2$ possible set partitions of $\{1,2,3,4\}$ of type $(2,2)$ with cross collision, we get that the total contribution of this case is indeed 
$$
1_{\alpha=\pm 1}\parenth{ 2X^2\log(X)^2+\Oh{X^2\log X}}.
$$

\case{Collision of type $(4)$} 
We bound the contribution of this case by 
$
\ll \sum_{l(\eta_1)\le \log X}l(\eta_1)^4
$. Using propositions \ref{prop_collision_integrals} and \ref{prop_J_d_principal_bound}, we get that
$$
\lim_{g\to\infty}\Ewp\left[
\sum_{l(\eta_1)\le \log X}l(\eta_1)^4
\right]=I_4(X)\ll X\log(X)^3,
$$
which is absorbed into the error term.
\end{proof}

\begin{cor}\label{cor_variance_over_lattices}
We have, as $X\to\infty$,
\begin{equation*}
\lim_{g\to\infty}\Ewp[\meanGamma{G_M(X,\Gamma)^2}-\meanGamma{G_M(X,\Gamma)}^2]
=\frac{4q-4}{q^2}X^2\log^2 X+\Oh{\frac{1}{q}X^2\log X}.
\end{equation*}
\end{cor}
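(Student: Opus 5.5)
This is a direct combination of Corollary \ref{cor_variance_over_gamma_explicit} with Proposition \ref{prop_S_alpha_estimate}. By Corollary \ref{cor_variance_over_gamma_explicit}, the quantity on the left equals
$$
\parenth{\frac{1}{q}-\frac{1}{q^2}}\lim_{g\to\infty}\Ewp\left[\sum_{\alpha\in\F_q^\times}S_\alpha(X,q)\right].
$$
The sum over $\alpha$ is finite (there are $q-1$ terms) and each $S_\alpha(X,q)$ is a finite sum of bounded geometric functions. We may therefore interchange $\sum_\alpha$ with $\lim_{g\to\infty}\Ewp$, using linearity of expectation and the fact that each limit exists by the analysis in the proof of Proposition \ref{prop_S_alpha_estimate}.

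Next we insert Proposition \ref{prop_S_alpha_estimate}, which gives
$$
\lim_{g\to\infty}\Ewp\left[\sum_{\alpha\in\F_q^\times}S_\alpha(X,q)\right]=4X^2\log(X)^2+\Oh{X^2\log X}.
$$
Multiplying by $\frac{1}{q}-\frac{1}{q^2}=\frac{q-1}{q^2}$ produces the main term $\frac{4q-4}{q^2}X^2\log^2X$. The error becomes $\frac{q-1}{q^2}\Oh{X^2\log X}\ll \frac1q X^2\log X$, as claimed.

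The only point needing care is whether the $\Oh{X^2\log X}$ in Proposition \ref{prop_S_alpha_estimate} is uniform in $q$, since $q=q(X)$ may vary. In that proof, only the collision types $(2,2)$ (cross collisions, present only for $\alpha=\pm1$) and $(4)$ contribute. The $(2,2)$ case is exactly $4\cdot 2\,I_2(X)^2$ (counting the $\alpha=\pm1$ terms), independent of $q$ for odd $q$. The $(4)$ case is bounded by a constant times $I_4(X)$, again independent of $q$, because the number of admissible sign patterns and values of $\alpha$ with $\alpha\epsilon=\pm\epsilon'$ is bounded absolutely. Hence the implied constant is absolute, and the corollary follows.
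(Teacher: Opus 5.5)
Your proposal is correct and follows the paper's proof: the paper also just substitutes Proposition \ref{prop_S_alpha_estimate} into Corollary \ref{cor_variance_over_gamma_explicit} and multiplies by $\frac{q-1}{q^2}$, and your remark that the error is uniform in $q$ is sound and makes explicit something the paper leaves implicit (for odd $q$ only $\alpha=\pm1$ contribute). One bookkeeping slip: the cross-collision term is $4\cdot 2\,I_2(X)^2$ for each of $\alpha=1$ and $\alpha=-1$ separately, so the total is $16\,I_2(X)^2 = 4X^2\log^2X+\Oh{X^2\log X}$, which is what matches the main term you use.
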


\begin{proof}
This follows directly from Proposition \ref{prop_S_alpha_estimate} and Corollary \ref{cor_variance_over_gamma_explicit}.
\end{proof}

\subsection{The second moment of $J_M(X,q)$} In order to show that $\meanGamma{G_M(X,\Gamma)}$ concentrates around $X\log X$, we will need to compute the second moment of $J_M(X,q)$.

\begin{prop}\label{prop_expected_val_J_M}
As $X\to\infty$, we have
$$
\lim_{g\to\infty}\Ewp[J_M(X,q)^2]=X^2\log(X)^2+\Oh{X^2\log X}.
$$
\end{prop}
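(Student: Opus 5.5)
We expand the square as a fourfold sum,
$$
J_M(X,q)^2=\sum_{N(\gamma_1),\ldots,N(\gamma_4)\le X\atop{[\gamma_1]=[\gamma_2]\pmod q\atop{[\gamma_3]=[\gamma_4]\pmod q}}}\Lambda(\gamma_1)\Lambda(\gamma_2)\Lambda(\gamma_3)\Lambda(\gamma_4),
$$
and follow the scheme of Proposition \ref{prop_S_alpha_estimate}. By Lemma \ref{lem_non_SNS_are_negligible}, after taking $\Ewp$ and letting $g\to\infty$ only tuples whose underlying distinct primitive geodesics $\eta_1,\ldots,\eta_d$ form an SNS tuple survive. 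Their homology classes can be completed to a symplectic basis, so they are linearly independent mod $q$. We therefore split according to the collision type $\pi$ of $(\gamma_1,\ldots,\gamma_4)$ and the signs $\epsilon_i$ with $\gamma_i=\eta_{\pi(i)}^{\epsilon_i}$. Each congruence $[\gamma_1]-[\gamma_2]=0$, $[\gamma_3]-[\gamma_4]=0$ then becomes a condition on coefficients of independent vectors, namely that the coefficient of each $[\eta_r]$ vanishes mod $q$. Since $q$ is odd and the coefficients lie in $\{0,\pm1,\pm2\}$, such a condition is equivalent to vanishing over $\Z$.

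We go through the collision types as follows.
\begin{enumerate}
\item If some $\gamma_i$ is a singleton block, its coefficient is $\pm1\neq0$, so there is no contribution.
\item For type $(2,2)$ with $\pi=\{\{1,2\},\{3,4\}\}$, the conditions force $\epsilon_1=\epsilon_2$ and $\epsilon_3=\epsilon_4$. This gives $4$ sign choices, each contributing $l(\eta_1)^2l(\eta_2)^2$ summed over SNS pairs of unoriented geodesics. By Propositions \ref{prop_collision_integrals} and \ref{prop_J_d_principal_bound} this equals $4I_2(X)^2=X^2\log(X)^2+\Oh{X^2\log X}$.
\item For the cross pairings, e.g.\ $\{\{1,3\},\{2,4\}\}$, the equations read $\epsilon_1[\eta_1]-\epsilon_2[\eta_2]=0$, which is impossible for independent classes, so these contribute nothing.
\item For type $(4)$, the total is bounded by $16I_4(X)\ll X\log(X)^3$, which is absorbed into the error term.
\end{enumerate}
Summing these cases gives the statement.

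The main point needing care is the justification, as in Proposition \ref{prop_sum_c_alphas_pi_in_terms_of_integrals_and_S_pi}, that the sum over oriented tuples with a fixed collision type is exactly the sum over unoriented SNS $\eta$-tuples times the number of admissible sign patterns. This also requires using $q>2$, so that $2[\eta]\neq0\pmod q$ and the same-block opposite-sign configurations are excluded.
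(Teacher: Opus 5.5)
Your proposal is correct and follows essentially the same route as the paper. You expand the square, discard non-SNS tuples via Lemma \ref{lem_non_SNS_are_negligible}, and split by collision type: singleton blocks and cross pairings contribute nothing, the pairing $\{\{1,2\},\{3,4\}\}$ gives $4I_2(X)^2$, and type $(4)$ is absorbed into the error term. The only difference is that you bound the type $(4)$ term crudely by $16I_4(X)$, whereas the paper observes that the sign constraints leave exactly $4I_4(X)$; either suffices.
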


\begin{proof}
We have
$$
J_M(X,q)^2=\sum_{N(\gamma_1),\ldots,N(\gamma_4)\le X\atop{[\gamma_1]=[\gamma_2]\pmod{q}\atop{[\gamma_3]=[\gamma_4]\pmod{q}}}}\Lambda(\gamma_1)\Lambda(\gamma_2)\Lambda(\gamma_3)\Lambda(\gamma_4).
$$
We split the contribution to the different possible collision types. By Lemma \ref{lem_non_SNS_are_negligible}, we only consider the contribution of SNS tuples.

\case{Collision of type $\ab$, $\ab$ has a part of size $1$} Let $\ab\vdash 4$ be a partition with at least one part equal to $1$. Suppose that the tuple $\gamma_1,\ldots,\gamma_4$ has a collision of type $\ab$. Without loss of generality, assume that $\gamma_1$ does not collide with any other geodesic. Write
$$
\gamma_i=\eta_i^{\epsilon_i},\epsilon_i=\pm 1, \eta_1\neq \eta_j\text{ for }j\neq 1.
$$
In particular, $\eta_1,\eta_2$ form an SNS pair; we can thus assume without loss of generality that $[\eta_1]=e_1,[\eta_2]=e_2$ in $H_1(M,\Z)$. Then, the first homology condition translates to
$$
\epsilon_1e_1=\epsilon_2e_2\pmod{q},\epsilon_i=\pm 1,
$$
which is a contradiction. Therefore we get no contribution at all from this collision type when we take $\Ewp$ and let $g\to\infty$.

\case{Collision type $(4)$} In this case we have $\gamma_i=\eta_1^{\epsilon_i},\epsilon_i=\pm 1$, and $\eta_1$ a simple closed geodesic. The homology condition then forces $\epsilon_1=\epsilon_2,\epsilon_3=\epsilon_4$ and therefore the contribution of this case is $
4\sum_{N(\eta_1)\le X}l(\eta_1)^4
$. Using Proposition \ref{prop_collision_integrals} we get
$$
\lim_{g\to\infty}\Ewp\left[
\sum_{N(\eta_1)\le X}l(\eta_1)^4
\right]=I_4(X).
$$
Therefore, by Proposition \ref{prop_J_d_principal_bound}, this case contributes $\ll X\log(X)^3$.

\case{Collision type $(2,2)$}
There are two types of collisions, which contribute differently: cross collisions (when $\gamma_1$ collides with either $\gamma_3$ or $ \gamma_4$) and non-cross collisions (when $\gamma_1$ collides with $\gamma_2$). By essentially the same argument as in the first case, cross collisions give zero contribution after we take $\Ewp$ and let $g\to\infty$. Therefore we focus on the non-cross collision case. In this case we have
$$
\gamma_1=\eta_1^{\epsilon_1},\gamma_2=\eta_1^{\epsilon_2},\gamma_3=\eta_3^{\epsilon_3},\gamma_4=\eta_3^{\epsilon_4},\epsilon_i=\pm 1.
$$
The homology conditions translate then to demanding $\epsilon_1=\epsilon_2,\epsilon_3=\epsilon_4$. We get that the contribution of this case is $4\sum_{N(\eta_1),N(\eta_2)\le X}l(\eta_1)^2l(\eta_2)^2$. Using Proposition \ref{prop_collision_integrals} we get
$$
\lim_{g\to\infty}\Ewp\left[
\sum_{N(\eta_1),N(\eta_2)\le X}l(\eta_1)^2l(\eta_2)^2
\right]=I_2(X)^2.
$$
Using Proposition \ref{prop_J_d_principal_bound} we get that the total contribution of this case is $X^2\log(X)^2+\Oh{X^2\log X}$.
\end{proof}

\begin{cor}\label{cor_Var_g_G_M_X}
Define 
$$
\operatorname{Var}_g(\meanGamma{G_M(X,\Gamma)}):=\Ewp\left(
\meanGamma{G_M(X,\Gamma)}^2
\right)-\Ewp\left(
\meanGamma{G_M(X,\Gamma)}
\right)^2.
$$
We have, as $X\to\infty$, that
$$
\lim_{g\to\infty}\operatorname{Var}_g(\meanGamma{G_M(X,\Gamma)}=\Oh{X^2\log X}.
$$
\end{cor}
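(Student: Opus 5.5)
The plan is to expand the variance into a second moment minus a squared first moment, and to express both through $J_M(X,q)$.

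\emph{Second moment.} By Proposition \ref{prop_mean_GM_as_J_M},
$$
\lim_{g\to\infty}\Ewp\left[\meanGamma{G_M(X,\Gamma)}^2\right]=\parenth{1-\frac{1}{q}}^2\lim_{g\to\infty}\Ewp[J_M(X,q)^2].
$$
Proposition \ref{prop_expected_val_J_M} then turns this into
$$
\parenth{1-\frac{1}{q}}^2\parenth{X^2\log(X)^2+\Oh{X^2\log X}}.
$$

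\emph{First moment.} By \eqref{eqn_H_M_asymptotics} in the proof of Corollary \ref{cor_bdh_type_thm},
$$
\lim_{g\to\infty}\Ewp\left[\meanGamma{G_M(X,\Gamma)}\right]=\lim_{g\to\infty}\Ewp[H_M(X,q)]=2\parenth{1-\frac{1}{q}}I_2(X).
$$
Proposition \ref{prop_J_d_principal_bound} gives $2I_2(X)=X\log X+\Oh{X}$. Squaring, the limit of the squared first moment equals
$$
\parenth{1-\frac{1}{q}}^2\parenth{X^2\log(X)^2+\Oh{X^2\log X}}.
$$
Subtracting, the main terms $\parenth{1-\frac1q}^2X^2\log^2X$ cancel exactly. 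Since $\parenth{1-\frac1q}^2\le 1$ uniformly in $q$, the remainder is $\Oh{X^2\log X}$.

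\emph{Interchanging limits.} One must check that $\lim_{g\to\infty}$ of $\Ewp(\cdot)^2$ equals the square of $\lim_{g\to\infty}\Ewp(\cdot)$. This holds because each limit exists and is finite, as established in the cited propositions. The $\Oh{\Theta(X)^k/q^{2g}}$ error terms carried along in Proposition \ref{prop_S_M_mean_as_sum_over_C_alphas} are handled by noting that $\Ewp[\Theta(X)^4]$ is bounded in $g$ for fixed $X$. This follows from the same Mirzakhani integration and Lemma \ref{lem_non_SNS_are_negligible}, or simply from the trivial bound on the number of closed geodesics of length at most $\log X$ in terms of $g$ combined with $q^{-2g}$ decay. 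Since $q^{-2g}\to 0$, these errors vanish in the limit.

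The main obstacle is exactly this limit-interchange bookkeeping. Beyond it, one only has to make sure the cancellation of main terms is exact, which requires that the constant in Proposition \ref{prop_expected_val_J_M} be exactly $1$ and match $(2I_2)^2$ to leading order. The paper has already established this.
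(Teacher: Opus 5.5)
Your proposal is correct and follows the paper's proof: you take the second moment from Proposition \ref{prop_mean_GM_as_J_M} and Proposition \ref{prop_expected_val_J_M}, the squared first moment from \eqref{eqn_H_M_asymptotics} and Proposition \ref{prop_J_d_principal_bound}, and cancel the common main term $\parenth{1-\frac1q}^2X^2\log^2 X$. Your remark that the $\Oh{\Theta(X)^4/q^{2g}}$ errors vanish as $g\to\infty$, using a bound on $\Theta(X)$ that is polynomial in $g$ for fixed $X$, correctly justifies a step the paper passes over quickly.
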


\begin{proof}
By Proposition \ref{prop_mean_GM_as_J_M} and equation \eqref{eqn_H_M_asymptotics} we get that
$$
\lim_{g\to\infty}\operatorname{Var}_g(\meanGamma{G_M(X,\Gamma)})=\parenth{1-\frac{1}{q}}^2 \lim_{g\to\infty}\Ewp[J_M(X,q)^2]-\parenth{1-\frac{1}{q}}^2 X^2\log^2(X)+O(X^2\log X).
$$
Now the claim follows from Proposition \ref{prop_expected_val_J_M}.
\end{proof}

\subsection{$G_M(X,\Gamma)$ for a typical index $q$ lattice} 

\begin{prop}\label{prop_average_is_concentrated}
Let $q=q(X)\to_{X\to\infty}\infty$ be a sequence of primes. We have
$$
\lim_{X\to\infty}\lim_{g\to\infty} \Ewp\left[
1_{\left|\meanGamma{G_M(X,\Gamma)}-X\log X\right|\ge \epsilon X\log X}\right]=0.
$$
\end{prop}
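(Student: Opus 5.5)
This is a Chebyshev-inequality argument on the random variable $Y=\meanGamma{G_M(X,\Gamma)}=H_M(X,q)$ on $\mathcal{M}_g$ with the normalized Weil--Petersson measure. Write $\mu_g=\Ewp[Y]$. By equation \eqref{eqn_H_M_asymptotics} and Proposition \ref{prop_J_d_principal_bound}, $\lim_{g\to\infty}\mu_g=2(1-\frac1q)I_2(X)=(1-\frac1q)X\log X+\Oh{X}$. Since $q=q(X)\to\infty$, this is $X\log X\,(1+o(1))$ as $X\to\infty$. In particular, for $X$ large enough and then $g$ large enough (depending on $X$), we have $|\mu_g-X\log X|\le \frac{\epsilon}{2}X\log X$.

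On the event $|Y-X\log X|\ge\epsilon X\log X$ we then get $|Y-\mu_g|\ge \frac{\epsilon}{2}X\log X$. Chebyshev's inequality gives
$$
\Ewp\left[1_{|Y-X\log X|\ge\epsilon X\log X}\right]\le \frac{4\operatorname{Var}_g(Y)}{\epsilon^2X^2\log^2X}.
$$
Take $\limsup_{g\to\infty}$. By Corollary \ref{cor_Var_g_G_M_X}, $\lim_{g\to\infty}\operatorname{Var}_g(Y)=\Oh{X^2\log X}$, so the right-hand side is $\Oh{\epsilon^{-2}/\log X}$, which tends to $0$ as $X\to\infty$. We work with $\limsup_g$, or note that the limit exists by the same Mirzakhani computations; this suffices because the probability is nonnegative.

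The only delicate point is the order of limits and the uniformity in $q$. The implied constants in Corollary \ref{cor_Var_g_G_M_X} and in \eqref{eqn_H_M_asymptotics} should be independent of $q$. They come from $I_2(X)$, $I_4(X)$ and the bounded factors $(1-1/q)^2$. The $\Oh{\Theta(X)^4/q^{2g}}$ terms vanish as $g\to\infty$ for fixed $X$, and the fourth moment of $\Theta$ is finite in the $g\to\infty$ limit by Mirzakhani's formula. So I would check that the error terms in Proposition \ref{prop_expected_val_J_M} and in the computation of $\lim_g \Ewp[Y]^2$ are uniform in $q$. Their case analysis never used the size of $q$ beyond $q$ odd, since it needed only $\pm1\not\equiv 0$ and $1\not\equiv -1$ mod $q$, so this should be routine. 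The hypothesis $q\to\infty$ enters only to make the mean asymptotic to $X\log X$ rather than to $(1-1/q)X\log X$.
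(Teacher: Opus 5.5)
Your proposal is correct and is essentially the paper's argument: a Chebyshev (second-moment) bound using Corollary \ref{cor_Var_g_G_M_X} together with the mean asymptotic \eqref{eqn_H_M_asymptotics}. The only cosmetic difference is where you center. You center at $\Ewp[\meanGamma{G_M(X,\Gamma)}]$ and absorb the shift to $X\log X$ by the triangle inequality. The paper instead centers at $(1-1/q)X\log X$ and splits the second moment into $\operatorname{Var}_g$ plus a squared bias of size $\Oh{X^2}$.
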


\begin{proof}
We have the inequality
$$
1_{\left|\meanGamma{G_M(X,\Gamma)}-X\log X\right|\ge \epsilon X\log X}\le 1_{\left|\meanGamma{G_M(X,\Gamma)}-(1-1/q)X\log X\right|\ge (\epsilon-1/q) X\log X}.
$$
We take $X$ so large that $\epsilon-1/q>\epsilon/2$. Now we bound
$$
1_{\left|\meanGamma{G_M(X,\Gamma)}-(1-1/q)X\log X\right|\ge \frac{1}{2}\epsilon X\log X}\le 
\frac{\parenth{\meanGamma{G_M(X,\Gamma)}-(1-1/q)X\log X}^2}{\frac{1}{4}\epsilon^2 X^2\log^2 X}.
$$
We get that
\begin{multline*}
\lim_{g\to\infty}\Ewp\left[
1_{\left|\meanGamma{G_M(X,\Gamma)}-X\log X\right|\ge \epsilon X\log X}\right]\ll\\
\ll\lim_{g\to\infty} \Ewp\left[
\frac{\parenth{\meanGamma{G_M(X,\Gamma)}-(1-1/q)X\log X}^2}{\frac{1}{4}\epsilon^2 X^2\log^2 X}
\right]=o(1),
\end{multline*}
using Corollary \ref{cor_Var_g_G_M_X}, since
\begin{multline*}
\Ewp\left[\parenth{
\meanGamma{G_M(X,\Gamma)}-(1-1/q)X\log X}^2
\right]=\\
=\operatorname{Var}_g(\meanGamma{G_M(X,\Gamma)})+(\Ewp[\meanGamma{G_M(X,\Gamma)}]-(1-1/q)X\log X)^2.
\end{multline*}
\end{proof}

We are now ready to prove Theorem \ref{thm_G_M_X_gamma_concentrations}.

\begin{proof}[Proof of Theorem \ref{thm_G_M_X_gamma_concentrations}] By Chebyshev's inequality, 
$$
\Pr_{\Gamma\in L_{g,q}}\left[
\left|
G_M(X,\Gamma)-\meanGamma{G_M(X,\Gamma)}
\right|>\frac{1}{2}\epsilon X\log X
\right]\le \frac{\meanGamma{G_M(X,\Gamma)^2}-\meanGamma{G_M(X,\Gamma)}^2}{\frac{1}{4}\epsilon^2 X^2\log^2 X}.
$$
Using Corollary \ref{cor_variance_over_lattices} we get that
$$
\lim_{g\to\infty}\Ewp\left[
\Pr_{\Gamma\in L_{g,q}}\left[
\left|
G_M(X,\Gamma)-\meanGamma{G_M(X,\Gamma)}
\right|>\frac{1}{2}\epsilon X\log X
\right]
\right]\ll \frac{q-1}{\epsilon^2 q^2}\ll \frac{1}{\epsilon^2 q}.
$$
But
\begin{multline*}
\Pr_{\Gamma\in L_{g,q}}\left[
|G_M(X,\Gamma)-X\log X|>\epsilon X\log X
\right]\le\\\le \Pr_{\Gamma\in L_{g,q}}\left[
\left|
G_M(X,\Gamma)-\meanGamma{G_M(X,\Gamma)}
\right|>\frac{1}{2}\epsilon X\log X
\right]+1_{\left|\meanGamma{G_M(X,\Gamma)}-X\log X\right|\ge \frac{1}{2}\epsilon X\log X}.    
\end{multline*}
We get that 
\begin{multline*}
\lim_{g\to\infty} \Ewp\left[
\frac{1}{|L_{g,q}|}\#\left\{\Gamma\in L_{g,q}: \left|\frac{G_M(X,\Gamma)}{X\log X}-1\right|>\epsilon\right\}
\right]\ll\\
\ll\frac{1}{\epsilon^2 q}+\lim_{g\to\infty} \Ewp\left[
1_{\left|\meanGamma{G_M(X,\Gamma)}-X\log X\right|\ge \frac{1}{2}\epsilon X\log X}
\right].    
\end{multline*}
Using Proposition \ref{prop_average_is_concentrated}, taking $X\to\infty$ we get the result.
\end{proof}

\section{Comparison with the distribution of primes in arithmetic progressions}\label{sec_number_theoretic_discussoin}
Define 
$$
\Theta(X;Q,A)=\sum_{p\le X\atop{p\equiv A\pmod{Q}}}\log p,
$$
where the sum is over primes $p$. The Prime Number Theorem for arithmetic progressions states that for a fixed modulus $Q$, and $A$ with $\gcd(A,Q)=1$,
$$
\Theta(X;Q,A)\sim \frac{X}{\phi(Q)},\text{ as }X\to\infty.
$$

An interesting question is: how does the quantity $\Theta(X;Q,A)$ fluctuate around its mean value (which is $\frac{X}{\phi(Q)}$)? Hooley studied this question in a long series of papers spanning from the 1970s to the 2000s. Define 
$$
E(X;Q,A)=\Theta(X;Q,A)-\frac{X}{\phi(Q)}.
$$
The quantity
$$
G(X,Q)=\sum_{A\pmod{Q}\atop{\gcd}}E(X;Q,A)^2
$$
measures the variance of primes in arithmetic progressions for a fixed modulus $Q$. We can also study the averaged quantity 
$$
H(X,Q)=\sum_{Q'\le Q}G(X,Q').
$$

We discuss the work of Hooley and others on this problem, and how it relates to our results. For a longer survey on the theory of primes in arithmetic progressions, we recommend Hooley's ICM survey \cite{Hoo74}.

\subsection{Hooley's conjecture}

Hooley \cite{Hoo74} conjectured that under some unspecified conditions, for $X>Q$,
\begin{equation}\label{eqn_hooley_nontrivial_regime} 
G(X,Q)\sim X\log Q.
\end{equation}
In the "trivial" regime $Q^{\epsilon}<X<Q$, Friedlander and Goldston \cite{FriGol96} proved that 
$$
G(X,Q)\sim X\log X.
$$
Also, assuming a Hardy-Littlewood conjecture, they proved that \eqref{eqn_hooley_nontrivial_regime} holds in the range $X^{1/2+\epsilon}<Q<X$.

Rudnick and Keating \cite{RudKea14} considered a function field analog of Hooley's conjecture, and proved a result which is analogous to \eqref{eqn_hooley_nontrivial_regime} in the range $Q<X^{1-\epsilon}$. As a result, the estimate \eqref{eqn_hooley_nontrivial_regime} is believed to hold in the range $X^{\epsilon}<Q<X^{1-\epsilon}$. 

\subsubsection{Geodesics in homology classes} In \cite{Rud26b}, Rudnick considers the variance of homology classes mod $q$. Define, for $\alpha\in H_1(M,\Z/q\Z)$,
$$
\Psi_M(X;q,\alpha)=\sum_{N(\gamma)\le X\atop [\gamma]=\alpha\pmod{q}}\Lambda(\gamma), \tilde{G}_M(X,q)=\sum_{\alpha\in H_1(M,\Z/q\Z)} \parenth{\Psi_M(X;q,\alpha)-\frac{\Psi_M(X)}{q^{2g}}}^{2}.
$$
Here the sum $\Psi_M(X;q,\alpha)$ runs also over non-primitive $\gamma$. The quantity $\tilde{G}_M(X,q)$ is closely related to $G_M(X,q\Z^{2g})$ (the difference being that Rudnick also considers non-primitive geodesics). Rudnick proves that as $X\to\infty$,
\begin{equation}\label{eqn_rudnicks_result_hooley_geodesics}
\lim_{g\to\infty}\Ewp[\tilde{G}_M(X,q)]\sim\begin{cases}
    X\log X, & q>2,\\
    2X\log X, & q=2.
\end{cases}
\end{equation}

To explain the discrepancy from Hooley's conjecture \eqref{eqn_hooley_nontrivial_regime}, Rudnick \cite[\S 6]{Rud26b} introduces a random model for the distribution of closed geodesics in homology classes, based on the random allocation of weighted balls. This model can be seen to explain our results as well.

We note that since the volume of the lattice $q\Z^{2g}$ goes to infinity when $g\to\infty$, \eqref{eqn_rudnicks_result_hooley_geodesics} remains in the "trivial regime" where we expect most residue classes to contain no primitive geodesics of length $\le \log X$. Our Theorem \ref{thm_G_M_X_gamma_concentrations} shows that as $q=q(X)\to\infty$, for most lattices $\Gamma\in L_{g,q}$ of index $q$, a random Weil-Petersson $M$ has $G_M(X,\Gamma)$ asymptotically close to $X\log X$, thereby providing further evidence to Rudnick's random model.

\subsection{Theorems of Barban-Davenport-Halberstam type}\label{sec_thms_bdh}
The study of $H(X,Q)$ goes by the name of Barban-Davenport-Halberstam type theorems. Independent work of Barban and of Davenport-Halberstam gives an upper bound, which we state here in an improved form due to Gallagher \cite{Gal67}:

$$
H(X,Q)=O(QX\log X)+O(X^2\log^{-A}X),\text{ for any positive constant }A, Q\le X.
$$
Montgomery \cite{Mon70} and Hooley \cite{Hoo75a} gave an explicit asymptotic estimate. For every $A>0$, $X/(\log X)^A<Q<X$, they proved that
\begin{equation}\label{eqn_H_X_Q_estimate}
H(X,Q)=QX\log Q-cQX+\Oh{Q^{5/4}X^{3/4}+\frac{X^2}{(\log X)^A}},    
\end{equation}
where 
$$
c=\gamma+\log(2\pi )+1+\sum_p \frac{\log p}{p(p-1)}.
$$
Assuming GRH, Hooley \cite{Hoo75b} proved that \eqref{eqn_H_X_Q_estimate} holds for $X^{1/2+\epsilon}<Q<X$, for every $\epsilon>0$, with remainder $\Oh{X^2/(\log X)^A}$. 

Since our quantity $H_M(X,Q)=\meanGamma{G_M(X,\Gamma)}$ averages $G_M(X,\Gamma)$ over all sublattices of $\Z^{2g}$ of fixed index $q$, we view this situation as somewhat analogous to the study of $H(X,Q)$. Motivated by Rudnick's result \eqref{eqn_rudnicks_result_hooley_geodesics} and our Theorem \ref{thm_G_M_X_gamma_concentrations}, one may speculate that in the geodesics world, the correct main term in the asymptotic for $H_M(X,q)$ for a typical surface should be $X\log X$ (instead of the $X\log Q$ obtained by normalizing the main term in \eqref{eqn_H_X_Q_estimate} by the number of moduli being averaged). This is what our Corollary \ref{cor_bdh_type_thm} shows in the large genus limit.

\subsubsection{Higher moments} Hooley \cite{Hoo77} studies the limiting distribution of $E(X;Q,A)$ and the higher moments of it. Let $0\le t\le U$ be chosen at random, and define the random variable 
$$
\frac{E(e^t;Q,A)}{\sqrt{e^t\log Q/\phi(Q)}}.
$$
Under ERH and the LI hypothesis, Hooley proves that as $U\to\infty$, this random variable has a continuous distribution function $F(y)$ which is independent of $A$ for any given $Q$. Moreover, $\lim_{Q\to\infty}F(y)=\Phi(y)$, where $\Phi(y)$ is the distribution function of the normal distribution $\mathcal{N}(0,1)$.  

Even more related to our results, Hooley \cite{Hoo77} makes the conjecture that for $Q\to\infty$, with $\frac{X}{Q\log X}\to\infty$, $Q>X\log ^{-A}X$ for some constant $A>0$,
$$
\sum_{Q'\le Q}\phi^{\frac{k}{2}-1}(Q')\sum_{0<A\le Q'\atop{\gcd(A,Q')=1}}E(X;Q',A)^k=\begin{cases}
    0, & k\text{ is odd,}\\
    ((k-1)!!+o(1))QX^{k/2}\log(Q)^{\frac{k}{2}}, & k\text{ is even.}
\end{cases}
$$

In a subsequent paper \cite{Hoo98}, he then computes the third moment unconditionally. Denoting 
$$
S(Q)=\sum_{Q'\le Q}\phi(Q')\sum_{0<A\le Q'\atop{\gcd(A,Q')=1}}E(X;Q',A)^3,
$$
he proves that for any positive constant $A$ and any $Q$ with $X/Q\log X\to\infty$ we have
$$
S(Q)=o\parenth{Q^{3/2}X^{3/2}\log(X)^{3/2}}+\Oh{\frac{X^3}{\log (X)^A}}.
$$
These results suggest that in the regime $\frac{X}{Q\log X}\to\infty$, the errors $E(X;Q,A)$ have Gaussian behavior.

\subsubsection{Transition from Poisson to Gaussian} One can ask about the regime $\frac{X}{Q\log X}\to\lambda>0$, and the transition to the case where $\frac{X}{Q\log X}\to\infty$. In the context of counting primes in short intervals, namely studying either
$$
\Theta(X;H)=\sum_{X\le p\le X+H}\log p, H=o(X),
$$
or 
$$
\pi(X;H)=\#\{\text{primes } X\le p\le X+H\}, H=o(X),
$$
there is an established transition from Gaussian behavior in the regime $H/\log X\to\infty$ to Poisson behavior in the regime $H/\log X\to\lambda$. Assuming a certain version of the
prime $r$-tuple conjecture of Hardy and Littlewood, Gallagher \cite{Gal76} proved that for $H=\lambda \log X$,
$$
\frac{1}{X}\#\{n\le X:\pi(n;H)=k\}\to_{X\to\infty}\frac{e^{-\lambda}\lambda^k}{k!}.
$$
Montgomery and Soundarajan \cite{MonSound04}, assuming again a version of the conjecture of Hardy and Littlewood, prove that for the regime $\frac{H}{\log X}\to\infty$ we get a Gaussian behavior. 

Much more recently, Leung \cite{Leung26} considered the transition in the case of distribution in arithmetic progressions, without the extra averaging over $Q$. Assuming a version of the mentioned Hardy and Littlewood conjecture, he proves that for $\frac{X}{\phi(Q)\log X}\to\lambda>0$, 
$$
\frac{1}{\phi(Q)}\sum_{0<A\le Q\atop{\gcd(A,Q)=1}}\pi(X;Q,A)^k\to_{X\to\infty} \E[P_\lambda^k],P_\lambda\sim\mathrm{Poi(\lambda)}.
$$
Here, 
$$
\pi(X;Q,A)=\#\{\text{primes }p\le X,p\equiv A\pmod{Q}\}.
$$
Using the same conjecture, in the range $\frac{X}{\phi(Q)\log X}\to\infty$ he proves that
$$
\frac{1}{\phi(Q)}\sum_{0<A\le Q\atop{\gcd(A,Q)=1}}\parenth{\frac{1}{\sqrt{X\log Q/\phi(Q)}}E(X;Q,A)}^k\to_{X\to\infty} \begin{cases}
    0, & k \text{ is odd,}\\
    (k-1)!!, & k\text{ is even.}
\end{cases}
$$

Therefore, the transition we observe in the setting of prime geodesics in homology classes also appears for prime numbers (conditionally).

\bibliography{mybib}

@book{FarbMargalit,
  author    = {B. Farb and D. Margalit},
  title     = {A Primer on Mapping Class Groups},
  series    = {Princeton Mathematical Series},
  volume    = {49},
  publisher = {Princeton University Press},
  address   = {Princeton, NJ},
  year      = {2012}
}

@Article{FriGol96,
	author = {J.~B. Friedlander and D.~A. Goldston}, 
	title = {Variance of distribution of primes in residue classes}, 
	journal = {Quart. J. Math. Oxford Ser. (2)},
	volume = {47},
    number = {187},
	year = 1996, 
	pages = {313–336}}

@Article{Gal76,
author = {P. X. Gallagher},
title = {On the distribution of primes in short intervals},
journal = {Mathematika},
volume = {23},
number = {1},
year = 1976,
pages = {4-9}}

@Article{Gal67,
author = {P. X. Gallagher},
title = {The large sieve},
journal = {Mathematika},
volume = {14},
year = 1967,
pages = {14-20}}

@Article{Hoo74,
author = {C. Hooley},
title = {The Distribution of Sequences in Arithmetic Progressions},
journal = {Proc. ICM Vancouver},
year = 1974,
pages = {357-364}}

@Article{Hoo75a,
	author = "C. Hooley",
	title = "On the {Barban}-{Davenport}-{Halberstam} theorem {I}",
	journal = "Collection of articles dedicated to {Helmut} {Hasse} on his seventy-fifth birthday, III. J. Reine Angew. Math. 274/275",
	pages = "206-223", 
	year = 1975}

@Article{Hoo75b,
	author = "C. Hooley",
	title = "On the {Barban}-{Davenport}-{Halberstam} theorem {II}",
	journal = "J. London Math. Soc. (2)",
    volume = 9,
    number = 4,
	pages = "625-636", 
	year = 1975}

@Article{Hoo77,
	author = "C. Hooley",
	title = "On the {Barban}-{Davenport}-{Halberstam} theorem {VII}",
	journal = "J. London Math. Soc. (2)",
    volume = 16,
    number = 1,
	pages = "1-8", 
	year = 1977}

@Article{Hoo98,
	author = "C. Hooley",
	title = "On the {Barban}-{Davenport}-{Halberstam} theorem {VIII}",
	journal = "J. reine angew. Math.",
    volume = 499,
	pages = "1-46", 
	year = 1998}

@Article{Leung26,
	author = "S.~K. Leung", 
	title = "Moments of primes in progressions to a large modulus",
	journal = "Forum. Math.",
	volume = "38", 
    number = "2",
	pages = "555-574",
	year = 2026}

@Article{MP19,
	author = "M. Mirzakhani and B. Petri",
	title = "Lengths of closed geodesics on random surfaces
of large genus",
	journal = "Comment. Math. Helv",
	volume = "94",
	number = "4",
	pages = "869-889",
	year = 2019}

@Article{Mon70,
	author = "H.~L. Montgomery", 
	title = "Primes in arithmetic progressions",
	journal = "Michigan Math. J.",
	number = "17", 
	pages = "33-39",
	year = 1970}

@Article{MonSound04,
	author = "H.~L. Montgomery and K. Soundarajan", 
	title = "Primes in short intervals",
	journal = "Comm. Math. Phys.",
	volume = "252", 
    number = "1",
	pages = "589-617",
	year = 2004}

@article{PhillipsSarnak1987,
  author  = {R. Phillips and P. Sarnak},
  title   = {Geodesics in Homology Classes},
  journal = {Duke Math. J.},
  volume  = {55},
  number  = {2},
  year    = {1987},
  pages   = {287-297},
}

@Article{Pri11,
	author = "N. Privault",
	title = "Generalized {Bell} polynomials and the combinatorics of {Poisson} central moments",
	journal = "Electron. J. Comb.", 
	volume = "18",
	year = 2011}

@Article{Rud26b,
	author = "Z. Rudnick",
	title = "Closed geodesics in homology classes on random hyperbolic surfaces of large genus",
	year = 2026,
	journal = "arXiv:2607.06263 [Math.GT]"}

@article{RudKea14,
	title={The variance of the number of prime polynomials in short intervals and in residue classes},
	author={J.~P. Keating and Z. Rudnick},
	journal={Int. Math. Res. Not. IMRN},
	number={1},
	pages={259-288},
	year=2014
}
\bibliographystyle{alpha}

\end{document}